\documentclass[11pt]{amsart}

%% Style dans MesMacros alias dans Library/texmf/tex

%Begin0
\usepackage[utf8]{inputenc} 	% LaTeX, comprend les accents !
\usepackage[T1]{fontenc}
\usepackage{lmodern}
\usepackage{yhmath}
\usepackage{stmaryrd} % Where \llbracket and \rrbracket are defined
\usepackage[shortlabels]{enumitem} % Enumerate i) 
\usepackage{longtable}
\usepackage{booktabs}
\usepackage[ruled,vlined,linesnumbered,algo2e,commentsnumbered]{algorithm2e}

\usepackage{amsthm,amsmath,amssymb,amsbsy,bbm,mathrsfs,supertabular,
eurosym,graphicx,enumitem,xcolor}
\usepackage{mathtools}
\usepackage{hhline} %traits pleins tableau
\usepackage{tikz}
\usetikzlibrary{decorations.pathreplacing}

%\newtheoremstyle{BBstyle0}  {}{}{\itshape}{}{\bfseries}{}{6pt}{}
%\newtheoremstyle{BBstyle1}  {3pt}{3pt}{\rmfamily}{}{\itshape}{: }{3pt}{}
%\newtheoremstyle{BBstyle2}  {3pt}{3pt}{\itshape}{}{\bfseries\large}{}{0pt}{}
%\newtheoremstyle{BBstyle3}  {}{}{\itshape}{}{\bfseries}{: }{3pt}{}
%\newtheoremstyle{BBstyle4}  {}{}{\rmfamily}{}{\bfseries}{}{6pt}{}
  
\usepackage[normalem]{ulem} %pour hachurer, souligner etc. 
% \uline{texte} pour souligner, \uuline{texte}  pour souligner 2 fois, \sout{texte} pour barrer un texte. 
%
% Style biblio pour avoir autheur(date)
\usepackage[authoryear]{natbib}
%\bibliography{biblio}
% Style de la bibliographie
%\bibliographystyle{apalike}
% Utiliser \cite{autheur}
%end begin0

%Begin AN
%%%% \'ENONC\'ES Anglais
%%% Les styles des \'enonc\'es ne sont pas toujours ceux des thm.
\newtheorem{thm}{Theorem}
\newtheorem{lem}{Lemma}
\newtheorem{prop}{Proposition}
\newtheorem{df}{Definition}
\newtheorem{cor}{Corollary}
\newtheorem{ass}{Assumption}

\newtheorem{definition}{Definition}

% Counter for sub-assumptions
\newcounter{subass}[ass]
\renewcommand{\thesubass}{\theass.\arabic{subass}}
% Environment for sub-assumption items
\newenvironment{subassumption}[1][]{
  \refstepcounter{subass}
  \par\noindent\textbf{\thesubass\ #1.}\ }{\par}

% Chgt de style: nom en gras et contenu en ecriture normale Roman (non en italique)
\theoremstyle{definition}

\newtheorem{rem}{Remark}
\newtheorem{ex}{Example}

% Pour un titre en italique et contenu en \'ecriture normale Roman (comme pour 'proof')
%\theoremstyle{remark}

\usepackage[english]{babel}
%End AN

%\usepackage{showkeys}
\usepackage{enumitem}
\usepackage{hyperref}  % doit imp\'erativement \^etre apr\`es "showkeys" ! et avant Core pour avoir les \L_{p} \'ecrit correctement
%%%%%%%
%%%
%\input{Core}	

%Numérotation

% Parenth\`eses et autres

%%% fonctions
\newcommand{\argmin}{\mathop{\rm argmin}}

%%% operateurs

\newcommand{\Var}{\mathop{\rm Var}\nolimits}

% fleches

% Couleurs 
%\RequirePackage[colorlinks,citecolor=blue,urlcolor=blue]{hyperref}

% Mathbb

\newcommand{\E}{{\mathbb{E}}}
 
\renewcommand{\L}{{\mathbb{L}}}
\newcommand{\N}{{\mathbb{N}}}
\renewcommand{\P}{{\mathbb{P}}}
 
\newcommand{\R}{{\mathbb{R}}}

% Mathrm
\newcommand{\rB}{{\mathrm{B}}}

\newcommand{\rL}{{\mathrm{L}}} 
\newcommand{\rT}{{\mathrm{T}}}

% Mathscr 

% Mathfrak 

\newcommand{\frc}{{\mathfrak{c}}}

\newcommand{\frl}{{\mathfrak{l}}}

\newcommand{\frr}{{\mathfrak{r}}}

% Mathscr Bold
%
\DeclareMathAlphabet{\mathscrbf}{OMS}{mdugm}{b}{n}
%

%\newcommand{\sbZ}{{\mathscrbf{Z}}}

% Mathcal
\newcommand{\cA}{{\mathcal{A}}}
\newcommand{\cB}{{\mathcal{B}}}
\newcommand{\cC}{{\mathcal{C}}}
\newcommand{\cD}{{\mathcal{D}}}
\newcommand{\cE}{{\mathcal{E}}}
\newcommand{\cF}{{\mathcal{F}}}
 
\newcommand{\cH}{{\mathcal{H}}}

\newcommand{\cL}{{\mathcal{L}}} 

\newcommand{\cN}{{\mathcal{N}}}

\newcommand{\cR}{{\mathcal{R}}}
\newcommand{\cS}{{\mathcal{S}}} 
\newcommand{\cT}{{\mathcal{T}}}

% Mathcal en gras

%%%%% LETTRES en GRAS
%% minuscules

%% majuscules

%% symboles math\'ematiques en gras

%

%% lettres grecques en gras

%% autres lettres en gras

%% raccourcis 
\newcommand{\wh}[1]{\widehat{#1}}

\newcommand{\ov}[1]{\overline{#1}}
\newcommand{\op}{\mathrm{op}}

%% macros spécifiques
\def\source{f_{\cS}}
\def\target{f_{\cT}}
\def\wsource{\widehat{f}_{\cS}}
\def\wtarget{\widehat{f}_{\cT}}
\def\wTL{\widehat f_{\cT}^{\text{\scshape tl}}}
\def\wtess{\widehat f_{\cT}^{H}}

\def\gsl{g_{\ell}^{\star}}
\def\gslin{g^H_{\source}}
\def\gabh{g^H_{\ov{\theta}}}

\def\oh{\overline{h}}
\def\xhl{x_{H,\ell}}

\def\frlloc{\frl_{\mathrm{loc}}}
\def\betaloc{\beta_{\mathrm{loc}}}

%%%%%%% Les listes
%% Liste a), b) dans un texte
\newlist{lista}{enumerate}{1}
\setlist[lista,1]{label=\alph*),ref=\alph*)}

%% Liste i) ii) dans un th\'eor\`eme.
\newlist{listi}{enumerate}{1}
\setlist[listi,1]{label=(\roman*),ref=(\roman*),align=left}
%\setlist[listi,1]{label=\roman*),ref=\roman*)}
%,leftmargin=7mm

%%%% DIVERS
\newcommand{\thanksmark}[1]{\textsuperscript{#1}}

\renewcommand{\ge}{\geqslant}
\renewcommand{\le}{\leqslant}
\renewcommand{\geq}{\geqslant}
\renewcommand{\leq}{\leqslant}

\newcounter{algo}
\setcounter{algo}{1}

%%%%%%%%%%%%%%%%%%%%%%%%%%%
% COMMENTS
%%%%%%%%%%%%%%%%%%%%%%%%%%%
% Todonotes is useful during development; simply uncomment the next line
%    and comment out the line below the next line to turn off comments
%\usepackage[disable,textsize=tiny]{todonotes}
\usepackage[textsize=tiny]{todonotes}

\title[Localized Transfer Learning]{Tessellation Localized  Transfer learning for nonparametric regression}
%\author{Hélène Halconruy, Benjamin Bobbia and Paul Lejamtel}

\author{Hélène Halconruy$^{\thanksmark{1},\thanksmark{2}}$}
\author{Benjamin Bobbia $^\thanksmark{3}$}
\author{Paul Lejamtel $^\thanksmark{4}$}

\thanks{SAMOVAR, Télécom SudParis, Institut Polytechnique de Paris,
91120 Palaiseau, France.}

\thanks{Modal'X, Université Paris Nanterre,
92000 Nanterre, France.}

\thanks{DISC, Fédération Enac ISAE-SUPAERO,université de Toulouse, 31055 Toulouse, France}

\thanks{ESILV, Pôle Léonard de Vinci, 92400 Courbevoie, France}

\begin{document}

%\begin{frontmatter}
\title{Tessellation localized  transfer learning for nonparametric regression}
%\title{A sample article title with some additional note\thanksref{t1}}
%\runtitle{Localized Transfer Learning}
%\thankstext{T1}{A sample additional note to the title.}

%\begin{aug}
%%%%%%%%%%%%%%%%%%%%%%%%%%%%%%%%%%%%%%%%%%%%%%%
%% Additional information such as            %%
%% identifying the corresponding author must %%
%% be included in in the Acknowledgments     %%
%% section if necessary.                     %%
%% ORCID can be inserted by command:         %%
%% \orcid{0000-0000-0000-0000}               %%
%%%%%%%%%%%%%%%%%%%%%%%%%%%%%%%%%%%%%%%%%%%%%%%
% \author[A,B]{\inits{F.}\fnms{Hélène}~\snm{Halconruy}\thanks{\textbf{Corresponding author.}}\ead[label=e1]{helene.halconruy@telecom-sudparis.eu}\orcid{0009-0007-2163-4298}}
% \author[C]{\inits{S.}\fnms{Benjamin}~\snm{Bobbia}\ead[label=e2]{benjamin.bobbia@isae-supaero.fr}\orcid{0009-0007-2163-4298}}
% \author[D]{\inits{T.}\fnms{Paul}~\snm{Lejamtel}\ead[label=e3]{}}
% %%%%%%%%%%%%%%%%%%%%%%%%%%%%%%%%%%%%%%%%%%%%%%
% %% Addresses                                %%
% %%%%%%%%%%%%%%%%%%%%%%%%%%%%%%%%%%%%%%%%%%%%%%

% \address[A]{SAMOVAR, Télécom SudParis, Institut Polytechnique de Paris, 91120 Palaiseau, France}%\printead[presep={,\ }]{e1}

% \address[B]{Modal'X, Université Paris Nanterre,
% 92000 Nanterre, France.\printead[presep={,\ }]{e1}}

% \address[C]{DISC, Fédération Enac ISAE-SUPAERO,université de Toulouse, 31055 Toulouse, France\printead[presep={,\ }]{e2}}

% \address[D]{ESILV, Pôle Léonard de Vinci, 92400 Courbevoie, France\printead[presep={,\ }]{e3}}

%\end{aug}

\begin{abstract}
\noindent
\,Transfer learning can improve target prediction by leveraging related source tasks, but may also suffer from negative transfer when source-target similarity is heterogeneous. We propose a nonparametric regression transfer learning framework based on a local transfer assumption, in which the covariate space is partitioned into finitely many cells and the target regression function is locally approximated by a low-complexity transformation of the source regression function. This formulation allows similarity to vary spatially, enabling informative transfer while preventing harmful information sharing. We establish sharp minimax convergence rates for estimating the target regression function under a well-specified local linear transfer model, showing that transfer learning can mitigate the curse of dimensionality by exploiting reduced functional complexity. We also propose fully data-driven procedures that jointly select the partition and estimate both the transfer functions and the target regression, and derive oracle inequalities ensuring robustness to model misspecification. Numerical experiments illustrate the theoretical findings, including an application to stock return prediction using signature-based features.
\end{abstract}

%\begin{keyword}
%\kwd{Transfer learning}
%\kwd{Nonparametric regression}
%\kwd{Minimax optimality}
%\end{keyword}

%\end{frontmatter}

\maketitle

\section{Introduction}

Transfer learning (TL) originates in educational psychology, where learning is viewed as the generalization of prior experience to new but related situations. As articulated by C. H. Judd, \emph{transfer is possible only when a meaningful similarity or structural connection exists between learning activities}. A classical example is that musical training on the violin facilitates subsequent learning of the piano, due to shared underlying concepts and skills. This principle was later formalized in the machine learning literature (see, e.g., the surveys \cite{pan2009survey,Zhuang_2020,zhu2025recent}), where transfer learning refers to improving performance on a target task by exploiting knowledge acquired from a sufficiently similar source task.\\
\noindent
This paradigm is particularly compelling when target observations are rare, costly, or otherwise constrained, but data from related environments are available. In such settings, transfer learning offers a natural way to mitigate data scarcity by exploiting structural similarities across datasets. Its applications are broad, spanning classical machine learning domains such as computer vision \cite{ganin2016domain,Wang_2018_visualDA} and natural language processing \citep{ruder2019transfer,devlin2019bert}, including sentiment analysis \citep{liu2019roberta}. It has also been widely adopted in recommender systems \citep{pan2009survey,man2017cross} and fraud detection \citep{lebichot2021transfer}, and has more recently attracted increasing attention in the statistics community. In this context, transfer learning has been applied to nonparametric classification \cite{cortes2014domain,Reeve_Cannings_Samworth_2021,cai2021transfer}, large-scale Gaussian graphical models \cite{li2023transfer}, and contextual multi-armed bandits. At the same time, these developments have underscored the risk of \emph{negative transfer}, \emph{i.e.}, situations in which exploiting source information increases the target estimation error, thereby motivating robust approaches that explicitly account for heterogeneous or unreliable sources, such as the framework of Fan \emph{et al.} \cite{fan2025robust}.

\medskip
\noindent
In this work, we propose a regression transfer learning framework that explicitly accounts for heterogeneity in the relationship between source and target tasks, with the goal of enabling transfer where it is beneficial while avoiding negative transfer elsewhere. Our analysis is built around two complementary models: a compositional transfer model and a local linear transfer model.
\\
Inspired by the local perspective advocated by Reeve \emph{et al.}, we adopt a \textbf{compositional transfer model} in which the covariate space $[0,1]^d$ is partitioned into finitely many cells $H^\star={\cA_\ell^\star:\ell\in[L^\star]}$ such that, on each cell, the target regression function $\target$ is obtained by composing the source regression function $\source$ with a cell-specific transfer map. More precisely, for every $\ell\in[L^\star]$, there exists a function $g_\ell:\R\to\R$ satisfying $\target(x)=g_\ell\bigl(\source(x)\bigr)$ for all $x\in\cA_\ell^\star$.
When such a tessellation exists, the model is said to be \emph{well specified}. This structure captures spatially heterogeneous similarity: the form of transfer is allowed to vary across cells, yet remains low-dimensional within each cell since $g_\ell$ acts on the scalar quantity $\source(x)$. In this compositional regime, we show that both the transfer functions $g_\ell$ and the target regression function can be consistently estimated.
\\
To accommodate broader forms of similarity, we also introduce a more flexible \textbf{local linear transfer model}. In this second setting, we no longer require an exact compositional representation of the form $g_\ell\circ\source$. Instead, we assume that, within each cell, the discrepancy between $\target$ and $\source$ satisfies suitable local smoothness or low-complexity conditions. This relaxation allows for approximate or non-compositional relationships while retaining the key idea that transfer should be localized in the covariate space.
\\
Within this unified framework, we establish minimax convergence rates for estimating the target regression function under well-specified local linear transfer. In the compositional case, the additional structure yields further gains through the estimation of the transfer maps themselves. More generally, our guarantees take the form of oracle inequalities that separate estimation and approximation errors, thereby ensuring robustness to misspecification: when the compositional structure holds only approximately, or when the oracle tessellation is not exactly contained in the admissible class, performance deteriorates smoothly through an explicit bias term rather than collapsing due to negative transfer.
\\
Finally, we illustrate the practical relevance of our approach through experiments on synthetic data, where we control the degree of transfer and misspecification. We also consider the \emph{Abalone} dataset, where gender-specific regression tasks exhibit partial but heterogeneous similarity. In addition, we study a financial time-series application in which AMD stock returns are predicted using signature-based features.
These experiments confirm our theoretical findings. They highlight the gains achievable under well-specified transfer and demonstrate the stability of the method in misspecified settings.

\subsection{Related work}

A rapidly growing literature studies transfer learning in regression, aiming to exploit structural similarities between source and target models to improve estimation accuracy while mitigating the risk of \emph{negative transfer}, that is, performance degradation due to misleading or mismatched source information.

\medskip
\noindent\textit{\underline{Linear and parametric regression.}}
Early work focused on linear regression, where transfer is naturally expressed through proximity between regression coefficients or fitted responses. In the data-enriched framework of Chen, Owen and Shi \cite{Chen_Owen_Shi_2015}, a small target sample is complemented by a larger, potentially biased auxiliary dataset, with theoretical guarantees characterizing the bias-variance trade-off. Related parametric approaches include robust modeling under population shifts \cite{Bouveyron_Jacques_2010}, formal assessments of \emph{transfer gain} \cite{Obst_2022}, and semi-supervised extensions where unlabeled target covariates are abundant \cite{lai2025robust}.

\medskip
\noindent\textit{\underline{High-dimensional transfer in regression.}}
In high-dimensional regimes, transfer learning requires carefully balancing information sharing against task heterogeneity. Li, Cai and Hongzhe \cite{Li_Cai_Hongzhe_2022} develop a minimax theory for high-dimensional linear regression with multiple sources, showing that optimal rates can be achieved when the target model is close to a subset of auxiliary models. Extensions address partially overlapping features \cite{chang2024heterogeneous}, benign overfitting in overparameterized settings \cite{kim2025transfer}, algorithmic analyses via approximate message passing \cite{wang2025glamp}, and minimax-optimal rates for additive models \cite{moon2025minimax}.

\medskip
\noindent\textit{\underline{Non- and semiparametric regression.}}
In nonparametric settings, similarity is encoded at the level of regression functions, enabling flexible but more delicate transfer mechanisms. Closest to our work is the framework of Cai and Pu \cite{Cai_Pu_2024_nonparam}, which introduces an explicit \emph{transfer condition} requiring the target function to be well approximated in $\L^2$ by a low-complexity combination of source functions. Related contributions include kernel-based transfer \cite{wang2023minimax}, smoothness-adaptive and robust approaches \cite{lin2024smoothness,lin2025model}, source-function weighting \cite{lin2023source}, and semiparametric representation transfer with valid inference \cite{he2024representation}.

\subsection{Organization of the paper}
The remainder of the paper is organized as follows. Section~\ref{sec:prob_and_algo} introduces the statistical framework, formalizes the local linear transfer assumption, and describes the proposed estimation procedure. In particular, we define the class of admissible tessellations, specify the transfer models, and detail the corresponding data-driven selection strategy. Section~\ref{sec:risk_bounds} contains the main theoretical results. We establish matching minimax upper and lower bounds for target regression estimation within the local linear transfer model, given in Theorems~\ref{Thm:fixed_tessellation_rate} and \ref{Thm:transfer_lower_bound}. We further derive oracle inequalities covering both well-specified and misspecified regimes. In the compositional setting, we further derive oracle risk bounds both for the estimation of the target regression function (Theorem~\ref{Thm:global_g_risk_fixed_y}) and for the transfer function itself (Theorem~\ref{Thm:compositional_rate_oracle}). Section~\ref{sec:experiments} presents illustrative examples, and Section~\ref{sec:conclusion} concludes. Proofs are deferred to the supplementary material in Appendix Section.%\cite{suppmatHBL2026}.

\section{Problem formulation and algorithm}\label{sec:prob_and_algo}

%This section introduces the statistical framework and the estimation procedure underlying our local linear transfer learning approach.

\subsection{Statistical setting}
Let $d \in \mathbb{N} = \{1,2,\dots\}$.  
Consider two datasets of differing sizes and qualities. The first dataset, called the \textit{source sample}, contains a large number $n_{\cS}$ of low-quality data points. The second dataset, referred to as the \textit{target sample}, consists of high-quality data but is smaller in size, with $n_{\cT}$ data points.  

\noindent
Let $\mathcal{D}_{\cS}$ denote the source sample, comprising input/output pairs $\big((X_i, Y_i),\, i \in \cS\big)$, and let $\mathcal{D}_{\cT}$ denote the target sample, with pairs $\big((X_i, Y_i),\, i \in \cT\big)$.  

\noindent
For all $i \in \cS \cup \cT$, the inputs satisfy $X_i \in [0,1]^d$ and the outputs $Y_i \in \mathbb{R}$. The data follow the regression models
\begin{equation}\label{Eq:source_problem}
Y_i = \source \left(X_i\right) + \varepsilon_i, \quad i \in \cS,
 \end{equation}
and
\begin{equation}\label{Eq:target_problem}
Y_i = \target(X_i) + \varepsilon_i, \quad i \in \cT,
\end{equation}
where the noise terms $\varepsilon_i$ are independent. They satisfy $\mathbb{E}[\varepsilon_i | X_i] = 0$, $\operatorname{Var}[\varepsilon_i | X_i]\le \sigma_{\cS}^2$ ($i \in \cS$) and $\operatorname{Var}[\varepsilon_i | X_i]\le \sigma_{\cT}^2$ ($i \in \cT$).\\

\noindent
The goal is to estimate the target regression function $\target:[0,1]^d\to\R$ using both the target and the source samples, and in particular by leveraging an estimate of the source regression function $\source:[0,1]^d\to\R$. As emphasized in the introduction, transferring information from the source to the target requires a suitable \emph{transferability condition}.
\\
Our main structural hypothesis is that the feature space $[0,1]^d$ can be partitioned into finitely many cells $\cA_\ell^\star$, within which the relationship between the source and target regression functions is simpler and exhibits a localized low-complexity structure.
More precisely, we consider two forms of transfer assumptions: a strong \emph{structural connection assumption}, under which the target function is obtained from the source function through a cell-specific transformation $\gsl$ (Assumption~\ref{Ass:transfer_function}), and a more flexible \emph{local linear transfer assumption} that only requires a suitable local smoothness or low-complexity link between the two functions on each cell (Assumption~\ref{Ass:local_transfer}). The former corresponds to a compositional model and enables sharper statistical rates as well as estimation of the transfer maps themselves, while the latter accommodates broader, potentially non-compositional forms of similarity.

\begin{ass}[Compositional model assumption]\label{Ass:transfer_function}
There exists a partition of $[0,1]^d$ into cells
$H^\star=\{\cA_\ell^{\star} : \ell\in[L^{\star}]\}$ such that for all $\ell\in[L^{\star}]$ there exists a function $\gsl:\R\to\R$ satisfying
\begin{equation}\label{Eq:transfer_function}
\forall x\in\mathcal A_\ell^{\star},\qquad
\target(x)=\gsl(\source(x)).
\end{equation}
We define the associated \emph{transfer function}
$g:(x,y)\in[0,1]^d\times\R\mapsto\sum_{\ell\in [L^\star]}\gsl(y)\mathbf 1_{\cA_{\ell}^\star}(x)$.
\end{ass}

\begin{ass}[Local linear transfer   assumption]\label{Ass:local_transfer}
There exists a partition of $[0,1]^d$ into cells
$H^\star=\{\cA_\ell^{\star} : \ell\in[L^{\star}]\}$ such that for all $\ell\in[L^{\star}]$ there exist functions $a^\star,b^\star:[0,1]^d\to\R$, constants
$\frlloc>0$ and $\betaloc\in[0,1]$, such that for all
$x,x'\in[0,1]^d$,
\begin{equation}\label{Eq:local_transfer}
\big|\target(x)-\big(a^\star(x')(\source(x)-\source(x'))+b^\star(x')\big)
\big|
\le
\frlloc|x-x'|^{1+\betaloc}.
\end{equation}
\end{ass}

\begin{rem}[Interpretation and comparison of the transfer assumptions]

The tessellation $H^\star$ should be understood as an \textit{idealized representation of heterogeneous similarity}, not as a literal partition known to the practitioner. The transfer hypothesis formalizes the existence of regions in the covariate space where the target regression function admits a simple relationship with its source counterpart, while allowing this relationship to deteriorate elsewhere. It does not require global similarity between the source and target distributions, nor does it assume that the partition is uniquely defined or identifiable.\\
Assumption~\ref{Ass:transfer_function} is structural: on each cell $\cA_\ell^\star$, the target depends on the covariates only through the scalar quantity $\source(x)$, with no regularity imposed on the transfer function itself. In contrast, Assumption~\ref{Ass:local_transfer} is a local smoothness condition: near any point $x'$, the target regression function is well approximated by an affine function of $\source(x)$, up to a H\"older remainder in the ambient variable $x$.\\
The two assumptions are complementary. The former enforces a strong piecewise compositional structure but allows arbitrary irregularity of the transfer map, while the latter imposes smooth local behavior without requiring a global compositional representation. Moreover, if the compositional assumption is strengthened with sufficient smoothness (e.g.\ if $g_\ell^\star\in\cC^{1+\beta_\ell}$ and $\source\in\cC^{1}$ with non-degenerate gradient) then a Taylor expansion shows that Assumption~\ref{Ass:local_transfer} holds locally within each cell, with $\beta_{\mathrm{loc}}=\beta_\ell$.\\
Overall, the role of these assumptions is purely statistical: they specify structural conditions under which information transfer is possible and quantify, through oracle inequalities, how deviations from this idealized structure affect the achievable risk. The proposed procedures are fully data-driven and designed to adapt to such latent structure without requiring explicit knowledge of the tessellation.
\end{rem}

\noindent
\subsubsection*{\textbf{well-specified compositional model}}
\label{subsec:well-specified}

The transfer learning framework is said to be \emph{well-specified in the compositional sense} if there exists a tessellation 
$H^\star=\{\cA_\ell^\star:\ell\in[L^\star]\}$ such that 
Assumption~\ref{Ass:transfer_function} holds on each cell. 
In this regime, the source-target relationship is exactly captured by a cellwise transfer function, and no systematic model error is present. The model is \emph{misspecified} if no admissible tessellation satisfies 
Assumption~\ref{Ass:transfer_function} exactly, resulting in an irreducible approximation bias. 
Our analysis makes this bias explicit through oracle inequalities that balance estimation and approximation errors.

\subsubsection*{\textbf{well-specified local linear transfer model}} \label{subsec:well-specified_local}
We say that the framework is \emph{well-specified in the local linear transfer sense} if there exists a tessellation 
$H^\star=\{\cA_\ell^\star:\ell\in[L^\star]\}$ such that  Assumption~\ref{Ass:local_transfer} holds on each cell. 
It is misspecified if no such partition exists.

\subsection{Algorithm}
As the target tessellation on which Assumption~\ref{Ass:transfer_function} or Assumption \ref{Ass:local_transfer} holds is unknown, we select a partition from a suitable class of \emph{admissible tessellations}, defined below. To this end, the target sample is split into two independent subsamples of (almost) equal size $|\cT_2|\simeq |\cT_1|=\lfloor n_{\cT}/2\rfloor$. The first subsample, $\mathcal D_{\mathcal T_1}=\{(X_i,Y_i),\,i\in\cT_1\}$, plays the role of a \emph{training sample} and is used to estimate the cellwise transfer functions associated with each candidate tessellation. The second subsample, $\mathcal D_{\mathcal T_2}=\{(X_i,Y_i),\,i\in\cT_2\}$, is reserved for \textit{model selection} and is used exclusively to choose the tessellation via empirical risk minimization.\\
We fix an integer $L_{\max}>0$, which represents the maximal number of cells allowed in the procedure. In the well-specified case, we assume that $L^\star \le L_{\max}$; otherwise, $L_{\max}$ simply acts as a complexity cap on the admissible tessellations.
\begin{definition}[Admissible tessellation class]\label{Def:admissible-partitions}
Let $h>0$ be the bandwidth used in the local linear transfer estimation.
Let $\mathcal H$ be a collection of \emph{tessellations}
$H = (\cA_{H,\ell})_{\ell\in[L_H]}$, where each $\cA_{H,\ell}$ is a \emph{cell} and
$L_H\in\N$ denotes the number of cells. We say that a tessellation $H$ is \emph{admissible} if it satisfies the following conditions:
\begin{enumerate}
\item[(i)] \textbf{Minimum cell mass:}  there exists $\frc_{\mathrm{mass}}>0$ such that for all $\ell\in[L_H]$,
\begin{equation*}
|\cT_1^{H,\ell}|
\ge
\frc_{\mathrm{mass}}\, n_{\mathcal T_1}\, h^{d},
\end{equation*}
where $\cT_1^{H,\ell} := \{i\in\cT_1 : X_i \in \cA_{H,\ell}\}$.
\item[(ii)] \textbf{Locality radius:} there exists $\frc_{\mathrm{rad}}>0$ such that for all $\ell\in[L_H]$,
\begin{equation*}
\operatorname{diam}(\cA_{H,\ell}) \le \frc_{\mathrm{rad}}h.
\end{equation*}
\item[(iii)] \textbf{Regular shape:}
There exists a constant $\frr_{\mathrm{loc}}>0$ such that, for each cell $\cA_{H,\ell}$, one can find a point $\xhl\in \cA_{H,\ell}$ such that
\begin{equation*}
\rB_d\big(\xhl,\frr_{\mathrm{loc}}h\big)
\subseteq
\cA_{H,\ell}.
\end{equation*}
The point $\xhl$ is referred to as the \emph{representative point} of the cell $\cA_{H,\ell}$. Since the cells need not admit a natural geometric center, we simply assume - without loss of generality - that $\xhl$ serves as a center, or more precisely an \emph{anchor point}, for $\cA_{H,\ell}$.
\end{enumerate}
\end{definition}

\noindent 
The essence of our transfer procedure $(\mathrm{TL})^2$ lies in combining a global source estimator with \emph{locally adapted transfer functions} whose spatial organization is learned from the data (see Algorithm \ref{Algo:transfer_fixed_tessellation}). Our main assumption is that the relationship between the source and target regression functions can be described by \emph{locally affine transformations} of the source estimator $\wsource$ (Step 1), with parameters that may vary across the covariate space. To capture this spatial heterogeneity, the covariate space is partitioned using an admissible tessellation $H$. On each cell $\cA_{H,\ell}$, a local transfer function $\widehat g_{H,\ell}$ is estimated (Step 2) using a first subsample of target data. This function aligns the source estimator with the target responses in a neighborhood of the cell center, leveraging both spatial proximity and similarity in source predictions, and yields a transfer estimator $\wtess$ (Step 3).\\
In a second stage (see Algorithm \ref{Algo:transfer}), model complexity is controlled by clustering (Step 1) similar cellwise transfer behaviors and selecting, among tessellations $H$ of the resulting size, the one minimizing a target empirical risk (Step 2). This step balances bias and variance by adapting the granularity of localization to the data, i.e., the size of the local neighborhoods (or bandwidth) used for estimation.
\\
Overall, the method jointly adapts the \emph{local transfer corrections} and the \emph{spatial tessellation}, interpolating between global transfer learning and fully local estimation.
\begin{algorithm2e}
\DontPrintSemicolon
\SetAlgoLongEnd 
\caption{Transfer Learning estimation on an admissible tessellation $H\in\cH$}
\label{Algo:transfer_fixed_tessellation}
\KwData{Source sample $\mathcal D_{\cS}$, target training sample $\mathcal D_{\cT_1}$, fixed tessellation $H\in\cH$ with cells $\{\cA_{H,\ell}\}_{\ell\in[L_H]}$ and anchor points $\{\xhl\}_{\ell\in[L_H]}$.}
\KwResult{Transfer estimator on $H$: $\wtess$.}

%------------------------------------------------------------
\textbf{Step 1: Source Nadaraya-Watson estimator}\\
Define
\begin{equation*}
\wsource(x)
=\frac{\sum_{i\in\cS} K_{h_\cS}(\|X_i-x\|)Y_i}
{\sum_{i\in\cS} K_{h_\cS}(\|X_i-x\|)},
\qquad
h_{\cS} = n_{\cS}^{-\frac{1}{2d+\beta_{\cS}}}.
\end{equation*}

%------------------------------------------------------------
\textbf{Step 2: Local transfer estimation on each cell}\\
\For{$\ell=1,\dots,L_H$}{
Estimate $(\widehat a_{H,\ell},\widehat b_{H,\ell})$ by solving
\begin{multline*}
(\widehat a_{H,\ell},\widehat b_{H,\ell})
\in
\argmin_{a,b\in\mathbb R}
\frac{1}{n_{H,\ell}}
\sum_{\substack{i\in\cT_{1}^{H,\ell}}}
\Big(Y_i - a\big(\wsource(X_i)-\wsource(x_{H,\ell})\big) - b\Big)^2
\\\times K_{x,h}(\|X_i-x_{H,\ell}\|)K_{z,\ov{h}}(|\wsource(X_i)-\wsource(x_{H,\ell})|),
\end{multline*}
where $\cT_1^{H,\ell}=\{i\in\cT_1:X_i\in\cA_{H,\ell}\}$ and
$n_{H,\ell}:=|\cT_1^{H,\ell}|$.\\
Define the cell-wise transfer function
\begin{equation*}
\widehat g_{H,\ell}(y,y_\ell)
=
\widehat a_{H,\ell}(y-y_\ell)+\widehat b_{H,\ell}.
\end{equation*}
}

%------------------------------------------------------------

\textbf{Step 3: Transfer estimator on $H$}\\
Let $\ell_{H}(x)$ be the index of the cell of $H$ containing $x$. Define the transfer estimator on the tessellation $H$:
\begin{equation*}
\wtess(x)
=
\widehat g_{H,\ell_{H}(x)}
\Big(
\wsource(x),\,
\wsource\big(x_{H,\ell_{H}(x)}\big)
\Big).
\end{equation*}
\end{algorithm2e}

\begin{algorithm2e}
\DontPrintSemicolon
\SetAlgoLongEnd 
\caption{Tessellation-Localized Transfer Learning (TL)$^2$}
\label{Algo:transfer}
\KwData{Target validation sample $\mathcal D_{\cT_2}$, cellwise transfer function
$\widehat g_{H,\ell}$ for each candidate tessellation $H=\{\cA_{H,\ell}\}_{\ell\in [L_H]}\in\cH$ and each $\ell\in [L_H]$.}
\KwResult{Transfer estimator $\wTL$}

%------------------------------------------------------------
\textbf{Step 1: Select best tessellation}\\
Choose
\begin{equation*}
\widehat H
\in
\argmin_{\substack{H\in\cH:\\ L_H\le L_{\max}}}
\frac{1}{|\cT_2|}
\sum_{i\in\cT_2}
\Big(
Y_i - \widehat g_{H,\ell_H(X_i)}\big(\wsource(X_i),\wsource(x_{H,\ell_H(X_i)})\big)
\Big)^2.
\end{equation*}

%------------------------------------------------------------
\textbf{Step 2: Final transfer estimator}\\
Let $\ell_{\widehat H}(x)$ be the cell of $\widehat H$ containing $x$. Define the transfer estimator:
\begin{equation*}
\wTL(x)
=
\widehat g_{\widehat H,\ell_{\widehat H}(x)}
\Big(
\wsource(x),\,
\wsource(x_{\widehat H,\ell_{\widehat H}(x)})
\Big).
\end{equation*}
\end{algorithm2e}

\begin{rem}[$(\rT\rL)^2$]
The notation $(\rT\rL)^2$ naturally conveys the idea of two layers of \emph{localization} and \emph{transfer}.  
The first “localize-transfer” step is implemented in Algorithm \ref{Algo:transfer_fixed_tessellation}, where source information is locally transferred to the target by estimating cellwise affine corrections on a fixed tessellation. Algorithm \ref{Algo:transfer} performs a second localization, this time at the level of the transfer structure itself, by selecting a tessellation whose cells correspond to homogeneous transfer behaviors. No new transfer functions are learned at this stage; rather, the previously estimated cellwise transfer functions are selected and applied according to the chosen tessellation to produce the final estimator.
\end{rem}

\section{Risk bounds for the transfer estimators}\label{sec:risk_bounds}
\subsection{Assumptions}
In this section, we introduce a set of assumptions that we will need to establish some or all of the following results: upper and lower bounds for the transfer estimator, and an upper bound for the transfer functions $g_\ell$.\\

\noindent
The following Assumptions~\ref{Ass:design}, \ref{Ass:regularity}, and \ref{Ass:noise} are standard in nonparametric regression and are adapted here to a setting involving two regression problems: a source problem~\eqref{Eq:source_problem} and a target problem~\eqref{Eq:target_problem}.
\begin{ass}[Design]\label{Ass:design}
We impose the following two conditions on the sampling distributions:

\begin{subassumption}[Target design]\label{Ass:design_target}
The target design points $X_i$ ($i\in \cT$) are i.i.d.
with density $p_{\cT}$ satisfying 
$p_{\cT}\in[p_{\cT}^{\min},p_{\cT}^{\max}]$ where $p_{\cT}^{\max}\ge p_{\cT}^{\min}>0$.
\end{subassumption}
\begin{subassumption}[Source design]\label{Ass:design_source}
The source design points $X_i$ ($i\in \cS$) are i.i.d.
with density $p_{\cS}$ satisfying 
$p_{\cS}\in[p_{\cS}^{\min},p_{\cS}^{\max}]$ where $p_{\cS}^{\max}\ge p_{\cS}^{\min}>0$.
\end{subassumption}
\end{ass}

\begin{ass}\label{Ass:regularity}
Recall that, for $\beta>0$, $\frl>0$, and a set $E\subset\R^d$, the H\"older class
$\mathrm{\text{H\"ol}}(\beta,\frl;E)$ consists of functions $f:E\to\R$ that are
$\lfloor\beta\rfloor$ times continuously differentiable on $E$ and whose partial
derivatives of order $\lfloor\beta\rfloor$ satisfy
\[
\bigl|\partial^\alpha f(x)-\partial^\alpha f(y)\bigr|
\le
\frl\,\|x-y\|^{\beta-\lfloor\beta\rfloor},
\qquad
\forall\,x,y\in E,
\]
for all multi-indices $\alpha$ with $|\alpha|=\lfloor\beta\rfloor$.
\begin{subassumption}[Source function regularity]\label{Ass:regularity_source}
The source regression function satisfies
\[
\source \in \mathrm{\text{H\"ol}}(\beta_{\cS},\frl_{\cS};[0,1]^d).
\]
\end{subassumption}
\begin{subassumption}[Target function regularity]\label{Ass:regularity_target}
The target regression function satisfies
\[
\target \in \mathrm{\text{H\"ol}}(\beta_{\cT},\frl_{\cT};[0,1]^d).
\]
\end{subassumption}
\end{ass}

\begin{ass}\label{Ass:noise}
As a reminder, the noise terms $\varepsilon_i$ ($i\in \cS\cup \cT$) are independent and satisfy $\mathbb{E}[\varepsilon_i | X_i] = 0$. Moreover, for any $i\in \cS\cup \cT$, the random variable $\varepsilon_i$ is sub-exponential, i.e.
\begin{equation*}
\|\varepsilon_i\|_{\psi_1}\le \sigma_{\cS}\; (i\in \cS)\quad \text{and} \quad \|\varepsilon_i\|_{\psi_1}\le \sigma_{\cT} \; (i\in \cT),
\end{equation*}
for some constants $\sigma_{\cS},\sigma_{\cT}>0$, where $\|\cdot\|_{\psi_1}$ is the Orlicz $\psi_1$-norm defined for any real random variable $X$ by
\begin{equation*}
\|X\|_{\psi_1} = \inf \left\{ C > 0 \; : \; \mathbb{E}\left[ \exp\left( \frac{|X|}{C} \right) \right] \leq 2 \right\}.
\end{equation*}
\end{ass}

\begin{ass}[Kernels]\label{Ass:kernels}
The kernels $K:\R_+\to\R$, $K_x:\R_+\to\R$ and $K_z:\R_+\to\R$ are bounded, nonnegative, and
compactly supported on $[0,1]$. Moreover, they are symmetric and satisfy
\begin{equation*}
\int_{\R} K(u)du = 1,\quad \int_{\R} K_x(u)du = 1,
\quad\text{and}\quad
\int_{\R} K_z(u)du = 1.
\end{equation*}
Moreover, $K_z$  is Lipschitz on $\R$ with constant $\frl_{K_z}$has a finite second moment:
\begin{equation*}
0 < \mu_2(K_z) := \int_{\R} u^2 K_z(u)\,du < \infty.
\end{equation*}
We define the rescaled kernels $K_{h}=h^{-d}K(\cdot/h)$,
$K_{x,h}=h^{-d}K_x(\cdot/h)$ and
$K_{z,\overline h}=\overline h^{-1}K_z(\cdot/\overline h)$ where $h>0$ and $\overline h>0$ are  bandwidths.
\end{ass}

\noindent
To support our minimax bound, let us introduce the function class
\begin{multline}\label{eq:minimax_function_class}
\mathcal{F}(H^\star,\beta_{\cS},\beta_{\cT},\beta_{\mathrm{loc}})
=\Big\{(\source,\target):
\source\in\mathrm{\text{H\"ol}}(\beta_{\cS},\frl_{\cS};[0,1]^d),
\ \target\in \mathrm{\text{H\"ol}}(\beta_{\cT},\frl_{\cT};[0,1]^d),\\
\text{and Assumption~\ref{Ass:local_transfer} holds on } H^\star
\text{ with exponent }\beta_{\mathrm{loc}}
\Big\}.
\end{multline}
The class $\mathcal{F}(H^\star,\beta_{\cS},\beta_{\cT},\beta_{\mathrm{loc}})$
corresponds to a \nameref{subsec:well-specified_local} transfer learning model associated with the
oracle tessellation $H^\star$. Indeed, for any $(\source,\target)$ in this class,
the local linear transfer relationship encoded in
Assumption~\ref{Ass:local_transfer} holds exactly on each cell of $H^\star$,
with transfer regularity $\beta_{\mathrm{loc}}$.\\
\noindent
In the following sections, we measure performance using the regression risk
\begin{equation*}
\cR(g)
:=
\mathbb E\left[(Y-g(X))^2\right],
\end{equation*}
for measurable functions $g:[0,1]^d\to\mathbb R$.
Under the regression model $Y=\target(X)+\varepsilon$ with
$\mathbb E[\varepsilon|X]=0$, we have 
\begin{equation*}
\cR(g)-\cR(\target)
=
\|g-\target\|_{\L^2(\mu_X)}^2,
\end{equation*}
so that excess risk coincides with squared $\L^2(\mu_X)$-error.

\noindent
To decompose and interpret the error bound, we introduce auxiliary functions.
Fix a tessellation $H\in\mathcal H$.
The \emph{population cellwise transfer linearization} is defined by
\begin{equation}
\label{Eq:gslin_def_repeat}
\gslin :
x\in[0,1]^d
\mapsto 
a_{H,\ell_H(x)}^\star
\bigl(
\source(x)-\source(x_{\ell_H(x)})
\bigr)
+
b_{H,\ell_H(x)}^\star,
\end{equation}
where $\ell_H(x)$ denotes the index of the cell of $H$ containing $x$.
The function $\gslin$ provides a cellwise linear approximation of the transfer relation
$g^\star\circ \source = \target$ on the tessellation $H$. In general, $\gslin$ does not necessarily coincide with the optimal transfer representation unless Assumption~\ref{Ass:transfer_function} holds for $H$.

\medskip
\noindent
We further define the \emph{source oracle} as
\begin{equation}
\label{Eq:source_oracle_def_repeat}
\gabh :
x\in[0,1]^d
\mapsto
\overline a_{H,\ell_H(x)}
\bigl(
\source(x)-\source(x_{\ell_H(x)})
\bigr)
+
\overline b_{H,\ell_H(x)},
\end{equation}
where $\overline\theta_{H,\ell}=(\overline a_{H,\ell},\overline b_{H,\ell})$
denotes the cellwise least-squares estimator computed from the target sample $\cT_1$,
while retaining access to the \emph{true} source function $\source$.
The difference $\gslin-\gabh$ thus isolates the statistical error due to the estimation
of the transfer coefficients.
The transfer estimator $\wtess$ on the tessellation $H$ additionally replaces $\source$
with its nonparametric estimator $\wsource$.

\medskip
\noindent
For any fixed tessellation $H$, the following deterministic decomposition holds:
\begin{align}
\label{eq:error_decomposition}
\cR(\wtess)-\cR(\target)
\nonumber
&=\|\target-\wtess\|_{\L^2(\mu_X)}^2\\
&\le
2\|\target - \gslin\|_{\L^2(\mu_X)}^2
+ 2\|\gslin - \gabh\|_{\L^2(\mu_X)}^2 \notag\\
&\qquad
+ 2 \|\gabh-\wtess\|_{\L^2(\mu_X)}^2 \\
&=: 2\mathrm{Approx}(H)
+ 2\mathrm{Fit}_{\cT_1}(H)
+ 2\mathrm{Plug}_{\cS}(H). \notag
\end{align}

\noindent
We have the following interpretation: the term $\mathrm{Approx}(H)$ quantifies the \emph{transfer bias} induced by approximating
the target regression function $\target$ with a piecewise linear transfer model on $H$.
The quantity $\mathrm{Fit}_{\cT_1}(H)$ captures the estimation error arising from replacing
the population transfer parameters $(a_{H,\ell}^\star,b_{H,\ell}^\star)$ with their
empirical counterparts estimated from the target sample $\cT_1$.
Finally, $\mathrm{Plug}_{\cS}(H)$ measures the additional error incurred by substituting
the unknown source function $\source$ with its nonparametric estimator $\wsource$
within the transfer model.
\\

\noindent
Beyond requiring the tessellations to be admissible, we also need the corresponding partition of $[0,1]^d$ to satisfy certain regularity conditions and to ensure an appropriate distribution of the target data within the cells.

\begin{ass}[Local design regularity on each cell]\label{Ass:local-design}
We impose the following local regularity condition on the design.

\begin{subassumption}\label{Ass:H_local-design}
\textbf{On admissible tessellations (local Gram regularity).}
For any admissible tessellation $H\in\cH$, there exist constants
$0<\lambda_{H,\min}\le \lambda_{H,\max}<\infty$ such that for all
$\ell\in[L_H]$,
\begin{equation*}
\lambda_{H,\min} I_2
\;\preceq\;
\frac{1}{|\cT_1^{H,\ell}|}
(\Psi_{H,\ell})^\top \Psi_{H,\ell}
\;\preceq\;
\lambda_{H,\max} I_2,
\end{equation*}
where $\Psi_{H,\ell}$ is the $n_{H,\ell}\times 2$ design matrix with rows
$\phi_{H,\ell}(X_i)^\top$, $i\in\cT_1^{H,\ell}$, and
\begin{equation}\label{eq:feature_map_def}
\phi_{H,\ell}(x)
=
\bigl(
1,\;
\source(x)-\source(x_{H,\ell})
\bigr)^\top,
\end{equation}
with $\cT_1^{H,\ell}:=\{i\in \cT_1 : X_i\in\cA_{H,\ell}\}$.

\smallskip
\noindent
\textbf{Weighted Gram condition on the stability event.}
On the event $\cE_{\mathrm{ess}}$ (defined in \eqref{Eq:event_ESS}), the corresponding
\emph{weighted} Gram matrices
\begin{equation*}
G_{H,\ell}
:=
\frac{1}{n_{H,\ell}}(\Psi_{H,\ell})^\top W_{H,\ell}\Psi_{H,\ell},
\qquad
W_{H,\ell}:=\mathrm{diag}(w_{i,\ell})_{i\in\cT_1^{H,\ell}},
\end{equation*}
satisfy, for all $H\in\cH$ and all $\ell\in[L_H]$,
\begin{equation*}
\lambda_{H,\min} I_2
\;\preceq\;
G_{H,\ell}
\;\preceq\;
\lambda_{H,\max} I_2.
\end{equation*}
\end{subassumption}

\begin{subassumption}\label{Ass:star_local-design}
\textbf{On the target tessellation.}
There exist constants
$0<\lambda_{\min}^\star\le \lambda_{\max}^\star<\infty$
such that for all $\ell\in[L^\star]$,
\begin{equation}
\lambda_{\min}^\star I_2
\preceq
\frac{1}{|\cT_1^{\star,\ell}|}
(\Psi_\ell^\star)^\top \Psi_\ell^\star
\preceq
\lambda_{\max}^\star I_2,
\end{equation}
where $\Psi_\ell^\star$ is the $n_\ell\times 2$ design matrix with rows
$\phi_\ell^\star(X_i)^\top$ ($i\in\cT_1^{\star,\ell}$), and
\begin{equation}
\phi_\ell^\star(x)
=
\bigl(
1,
\source(x)-\source(x_{\ell})
\bigr)^\top,
\end{equation}
with $\cT_1^{\star,\ell}:=\{i\in \cT_1 : X_i\in\cA_\ell^\star\}$.
\end{subassumption}
\end{ass}

\medskip
\noindent
For any $\ell\in[L_H]$, define
\begin{equation}
n_{H,\ell}
:=
\sum_{i\in \cT_1} \mathbf 1_{\{X_i\in\cA_{H,\ell}\}}
\quad\text{and}\quad
p_{H,\ell}
:=
\mathbb P(X\in \cA_{H,\ell}).
\end{equation}

\noindent
For a fixed cell $\ell$, we write $m_{H,\ell}:=g_\ell\circ\source$ for the
corresponding regression function on $\cA_{H,\ell}$.
We assume that $m_{H,\ell}$ admits a local linear expansion at the representative
point $x_{H,\ell}$:
there exist $\theta^\star_{H,\ell}=(a^\star_{H,\ell},b^\star_{H,\ell})\in\R^2$ and a remainder
$r_{H,\ell}(\,\cdot\,):=r_{H,\ell}(\,\cdot\,;x_{H,\ell})$ such that, for all
$u\in\cA_{H,\ell}$,
\begin{equation}\label{eq:local_lin_decomp}
m_{H,\ell}(u)
=
(\theta^\star_{H,\ell})^\top\phi_{H,\ell}(u)
+
r_{H,\ell}(u),
\end{equation}
where the feature map is defined by \eqref{eq:feature_map_def}.
Last we need:
\begin{ass}[Uniform boundedness of local features and residuals]
\label{Ass:bounded_features}
There exist constants $\phi_{\max}, r_{\max} < \infty$ such that, for all
$H \in \cH$, all $\ell \in [L_H]$ and all $x \in \cA_{H,\ell}$,
\begin{equation*}
\|\phi_{H,\ell}(x)\|_2 \le \phi_{\max},
\qquad
|r_{H,\ell}(x)| \le r_{\max}.
\end{equation*}
\end{ass}

\begin{ass}[Cellwise lower-mass condition]\label{Ass:mass_cell}
For all $\delta\in(0,1)$ and all $H\in\cH$,
\begin{equation}
n_{\cT_1}
\min_{\ell\in[L_H]} p_{H,\ell}
\;\ge\;
8\log\Big(\frac{L_H}{\delta}\Big).
\end{equation}
\end{ass}

\begin{rem}[On Assumption \ref{Ass:star_local-design}]\label{rem:gram-regularity}
Assuming local Gram invertibility uniformly over all 
$H\in\mathcal H$ simply means that $\mathcal H$ is restricted to a class 
of \emph{admissible} tessellations whose cells are regular enough for local 
linear estimation.  
This type of condition is standard in the theory of 
tessellation-based estimators: see, for example, 
Gy\"orfi et al. (\cite{gyorfi2002distribution}, Chapter 12), Scornet et al. \cite{scornet2015consistency}, 
and Wager and Athey \cite{wager2018estimation}, where the analysis excludes 
degenerate cells to ensure identifiability of local fits.  
Our assumption avoids stronger geometric or density conditions on the function $\source$ and isolates precisely the regularity needed for the 
local estimator to be well-posed.
\end{rem}

\noindent
To express the approximation and estimation bounds purely in terms of the number of
cells $L_H$, we impose the following mild regularity condition on the geometry of
the tessellations, which rules out highly irregular partitions with a few large
cells and many tiny ones.
\begin{ass}[Quasi-uniform mesh condition]
\begin{subassumption}\label{Ass:quasi_uniform} \textbf{On admissible tessellations.}
There exists a constant $\frc_\Delta>0$ such that for all $H\in\cH$,
\begin{equation*}
\Delta_{\max}(H):=\max_{\ell\in[L_H]}\Delta_{H,\ell}
\le
\frc_\Delta\,L_H^{-1/d},
\end{equation*}
where $\Delta_{H,\ell}:=\mathrm{diam}(\cA_{H,\ell})$.
\end{subassumption}
\begin{subassumption}\label{Ass:quasi_uniform_target} \textbf{On the target tessellation.} There exist constants $0<\frc_{\min}^\star\le \frc_{\max}^\star<\infty$ such that
\begin{equation}
\frc_{\min}^\star(L^\star)^{-1/d}
\le
\Delta_{\min}(H^\star)
\le
\Delta_{\max}(H^\star)
\le
\frc_{\max}^\star(L^\star)^{-1/d},
\end{equation}
where $\Delta_{\min}(H^\star){=}\min_{\ell\in[L^\star]}\mathrm{diam}(\cA_\ell^\star)$ and $\Delta_{\max}(H^\star){=}\max_{\ell\in[L^\star]}\mathrm{diam}(\cA_\ell^\star)$.
\end{subassumption}
\end{ass}

\begin{ass}[Effective sample size and plug-in regime]\label{Ass:ESS_plugin}
For any fixed admissible tessellation $H$ and cell
$\cA_{H,\ell}$, define the effective sample size count
\begin{equation*}
N_{H,\ell}=\#\Big\{i\in\cT_1 :
\|X_i-x_{H,\ell}\|\le h, 
|\wsource(X_i)-\wsource(x_{H,\ell})|\le \overline h
\Big\}.
\end{equation*}
\begin{subassumption}\label{subass:ESS}
Fix $\delta\in(0,1)$.
There exist constants
$\frc_1^{\mathrm{ess}},\frc_2^{\mathrm{ess}}>0$
such that, with probability at least $1-\delta$, the following event holds:
\begin{equation}
\frac{1}{\frc_2^{\mathrm{ess}}}
n_{\cT_1} h^d \overline h
\le
\sup_{H\in\cH,\cA_{H,\ell}} N_{H,\ell}
\le
\frc_2^{\mathrm{ess}}
n_{\cT_1} h^d \overline h .
\end{equation}
We define the event
\begin{equation}\label{Eq:event_ESS}
\cE_{\mathrm{ess}}=\Big\{\forall H\in\cH,\forall \ell\in[L_H]:\frac{1}{\frc_2^{\mathrm{ess}}}
n_{\cT_1} h^d \overline h
\le
N_{H,\ell}
\le
\frc_2^{\mathrm{ess}}
n_{\cT_1} h^d \overline h\Big\}.
\end{equation}
In particular, the effective sample size of the local weighted estimator
within each cell is of order $n_{\cT_1} h^d \overline h$, uniformly over
$(H,\ell)$.
\end{subassumption}
\begin{subassumption}\label{subass:plug-in}
We further assume the \textit{plug-in regime}
\begin{equation}\label{eq:plug-in_condition}
n_{\cT_1} h^d \overline h
\ge
\frc_1^{\mathrm{ess}}
\log\Big(\frac{1}{\delta}\Big).
\end{equation}
\end{subassumption}
\end{ass}

\subsection{Nonasymptotic risk upper bounds}

\subsubsection{Oracle rate in a well-specified compositional model}

\noindent
We begin by considering an idealized, well-specified setting in which the source-target relationship follows the compositional model - corresponding to Assumption \eqref{Ass:transfer_function} - on the oracle tessellation. This regime is a natural starting point because the structural connection $\target = g_\ell\circ \source$ on each cell not only yields the strongest form of transfer, but also makes the transfer functions $g_\ell$ identifiable and estimable from the data (see Subsection~\ref{Subsec:transfer_function_bound}). Under additional regularity assumptions on the maps $(g_\ell)_{\ell\in[L^\star]}$, the resulting transfer estimator leverages this low-dimensional structure to achieve a fast oracle rate.

\begin{ass}\label{Ass:regularity_transfer}
For all $\ell\in[L^\star]$, the transfer function
$g_\ell^\star$ belongs to the H\"older class
$\mathrm{\text{H\"ol}}(\beta_g,\frl_g;\R)$ for some $\beta_g>1$ and
$\frl_g>0$.
\end{ass}

\begin{thm}[Oracle rate under a \nameref{subsec:well-specified}]
\label{Thm:compositional_rate_oracle}
Assume that Assumption~\ref{Ass:transfer_function} holds on the oracle
tessellation $H^\star$, and that Assumptions~\ref{Ass:design},
\ref{Ass:regularity_source}, \ref{Ass:noise}, \ref{Ass:kernels},
\ref{Ass:local-design}, \ref{Ass:quasi_uniform}, and
\ref{Ass:regularity_transfer} hold, together with the plug-in
condition~\eqref{eq:plug-in_condition}.
Then the transfer estimator
$\wTL=\wh{f}_{\cT}^{H^\star}$ associated with the oracle tessellation
satisfies the following bound in expectation:
\begin{equation}\label{Eq:oracle_upper_bound_plugin_expect}
\E\big[\cR(\wTL)-\cR(\target)\big]
\lesssim
(L^\star)^{-2\beta_g\beta_{\cS}/d}
+
\frac{1+\log(L^\star)}{n_{\cT_1}h^d\overline h}
+
\bigl(1+\log(L^\star)\bigr)^2\,
n_{\cS}^{-\frac{2\beta_{\cS}}{2\beta_{\cS}+d}}.
\end{equation}
\end{thm}
\noindent
The oracle rate established above will serve as a benchmark in the minimax
lower bound analysis of Section~\ref{Subsec:lower_bound}, where we show that no
estimator can uniformly outperform this rate, even when the oracle
tessellation is known.\\
\noindent
While Theorem~\ref{Thm:compositional_rate_oracle} provides an
oracle benchmark under strong structural assumptions, such conditions are
often unrealistic in practice. In order to obtain robustness with respect
to model misspecification and heterogeneous transfer relationships, we now
replace the global compositional assumption by a weaker local regularity
condition.

\subsubsection{Upper bounds for a fixed tessellation in a local linear transfer model}

\noindent
This subsection constitutes a first step toward establishing the overall risk bound for the transfer estimator, which will later be combined with the empirical risk minimization (ERM) argument developed in the subsequent subsection. Under the local linear transfer Assumption \eqref{Ass:local_transfer} (corresponding to the local linear transfer model motivated by the minimax rate we aim to achieve) we derive a high-probability risk bound for the transfer estimator associated with an arbitrary fixed tessellation.

\begin{thm}[Fixed tessellation rate in \nameref{subsec:well-specified_local}]
\label{Thm:fixed_tessellation_rate}
Let $H\in\cH$ be an admissible tessellation.
Suppose that Assumptions~\ref{Ass:local_transfer},
\ref{Ass:design}, \ref{Ass:regularity}, \ref{Ass:noise},
\ref{Ass:kernels}, \ref{Ass:bounded_features}, \ref{Ass:local-design}, \ref{Ass:quasi_uniform} and
\ref{subass:ESS} hold.
Then, for all $\delta\in(0,1)$ such that
$n_{\cT_1}h^d\overline h\gtrsim \log(1/\delta)$,
the transfer estimator $\wtess$ associated with $H$ satisfies, with
probability at least $1-\delta$,
\begin{multline}\label{Eq:bound_per-tessellation-L2}
\cR(\wtess)-\cR(\target)
\lesssim
L_H^{-2(1+\beta_{\mathrm{loc}})/d}
+
\frac{1}{n_{\cT_1}h^d\overline h}
\log\Big(\frac{L_H}{\delta}\Big)\\
+
\log\Big(\frac{L_H}{\delta}\Big)
\Bigl(1+\log\Big(\frac{L_H}{\delta}\Big)\Bigr)
\,n_{\cS}^{-\frac{2\beta_{\cS}}{2\beta_{\cS}+d}} .
\end{multline}
\end{thm}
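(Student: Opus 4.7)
The plan is to start from the deterministic error decomposition \eqref{eq:error_decomposition} and bound each of $\mathrm{Approx}(H)$, $\mathrm{Fit}_{\cT_1}(H)$ and $\mathrm{Plug}_{S}(H)$ on a common high-probability event, before combining them by a single union bound. These three summands line up naturally with the four terms on the right-hand side of \eqref{Eq:bound_per-tessellation-L2}: $\mathrm{Approx}(H)$ yields the bias $L_H^{-2(1+\betaloc)/d}$; $\mathrm{Fit}_{\cT_1}(H)$ yields the cellwise variance $L_H/n_{\cT_1}$; and $\mathrm{Plug}_{S}(H)$ splits into a uniform Nadaraya--Watson contribution and a cell-size-refined interaction term. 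The main obstacle will be $\mathrm{Plug}_{S}(H)$, because the source error $\wsource-\source$ enters both through the kernel-weighted local fits that define $\wthl$ and through the final evaluation $\wtess(x)=\wahl(\wsource(x)-\wsource(\xhl))+\wbhl$, so these two effects must be disentangled.

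For $\mathrm{Approx}(H)$, I would apply Assumption~\ref{Ass:local_transfer} at each representative point $\xhl$. This yields the pointwise bound $|\target(x)-\gslin(x)|\le \frlloc\|x-\xhl\|^{1+\betaloc}$ for $x\in\cA_{H,\ell}$, and Definition~\ref{Def:admissible-partitions}(ii) combined with the quasi-uniform mesh condition~\ref{Ass:quasi_uniform} gives $\mathrm{diam}(\cA_{H,\ell})\lesssim L_H^{-1/d}$. Squaring and integrating against $\mu_X$ then produces $\mathrm{Approx}(H)\lesssim L_H^{-2(1+\betaloc)/d}$.

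For $\mathrm{Fit}_{\cT_1}(H)$, I would proceed cell by cell. On $\cA_{H,\ell}$ the estimator $\othl$ is a two-parameter weighted least-squares fit of $(\ahls,\bhls)$ with known design features $\phi_\ell^H$; the Gram invertibility in Assumption~\ref{Ass:H_local-design} controls the conditioning of the normal equations, and the sub-exponential noise assumption~\ref{Ass:noise} together with a Bernstein-type inequality gives $\|\othl-(\ahls,\bhls)\|^2\lesssim \log(L_H/\delta)/n_{H,\ell}$ with probability at least $1-\delta/L_H$. The minimum-mass condition of Definition~\ref{Def:admissible-partitions}(i) and Assumption~\ref{Ass:mass_cell} ensure $n_{H,\ell}\gtrsim n_{\cT_1}p_{H,\ell}$, so cell $\ell$ contributes at most $p_{H,\ell}\log(L_H/\delta)/n_{H,\ell}\lesssim \log(L_H/\delta)/n_{\cT_1}$ to the $\L^2(\mu_X)$ error. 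A union bound over the $L_H$ cells then yields $\mathrm{Fit}_{\cT_1}(H)\lesssim (L_H/n_{\cT_1})\log(1/\delta)$, up to the logarithmic factors already present in the plug-in term.

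For $\mathrm{Plug}_{S}(H)$, I would first place myself on the event that $\wsource$ attains its usual uniform rate $\|\wsource-\source\|_\infty^2\lesssim \log(L_H/\delta)(1+\log(L_H/\delta))\,n_{\cS}^{-2\beta_{\cS}/(2\beta_{\cS}+d)}$, which follows from Assumptions~\ref{Ass:regularity_source}, \ref{Ass:design_source} and \ref{Ass:noise} by standard kernel concentration, the extra $\log(L_H/\delta)$ factor arising from a supremum over a net adapted to the tessellation. On this event I would expand
$$\wtess(x)-\gabh(x)=(\wahl-\oahl)\bigl(\source(x)-\source(\xhl)\bigr)+\wahl\bigl[(\wsource(x)-\source(x))-(\wsource(\xhl)-\source(\xhl))\bigr]+(\wbhl-\obhl).$$
The evaluation bracket is bounded directly by $2\|\wsource-\source\|_\infty$ and produces the pure source rate (the fourth term of \eqref{Eq:bound_per-tessellation-L2}). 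The coefficient differences $\wthl-\othl$ are controlled by linearising the cellwise normal equations around $\source$ and inverting via the Gram bound of Assumption~\ref{Ass:H_local-design}; combined with the fact that $\source(x)-\source(\xhl)$ is of order $\mathrm{diam}(\cA_{H,\ell})^{\beta_{\cS}}\lesssim L_H^{-\beta_{\cS}/d}$ by Assumption~\ref{Ass:regularity_source}, this produces the cell-scale refinement $L_H^{-2\beta_g\beta_{\cS}/d}$, where the exponent $\beta_g$ arises from the local Hölder regularity of the transfer function. A final union bound over the three events delivers \eqref{Eq:bound_per-tessellation-L2}.
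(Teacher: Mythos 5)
Your overall strategy mirrors the paper: you use the same three-term decomposition \eqref{eq:error_decomposition}, and your treatment of $\mathrm{Approx}(H)$ (Assumption~\ref{Ass:local_transfer} at the representative points plus the quasi-uniform mesh bound on $\mathrm{diam}(\cA_{H,\ell})$) is essentially the paper's argument. The problems are in how you allocate and derive the remaining two terms of \eqref{Eq:bound_per-tessellation-L2}. In your Fit step you assert $\|\othl-(\ahls,\bhls)\|_2^2\lesssim\log(L_H/\delta)/n_{H,\ell}$ by a Bernstein argument alone. But $\othl$ is a kernel-weighted least-squares fit of the regression function $g_\ell\circ\source$, which is \emph{not} affine in $\source(\cdot)-\source(\xhl)$ on the cell; its deviation from the target coefficients therefore carries a deterministic bias coming from the remainder of the local linear expansion of $g_\ell\circ\source$, of order $\Delta_{H,\ell}^{\beta_g\beta_{\cS}}$. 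In the paper this is exactly the bias term $B_{H,\ell}$ in Lemma~\ref{lem:cellwise_ols_tail_union}, and it is the true origin of the third term $L_H^{-2\beta_g\beta_{\cS}/d}$. Your attempt to recover that term inside $\mathrm{Plug}_{S}(H)$ instead does not work: the coefficient gap $\wthl-\othl$ is an oracle-versus-plug-in perturbation, controlled by $\|\wsource-\source\|_\infty/\overline h$ plus stochastic fluctuations of size $(n_{\cT_1}h^d\overline h)^{-1/2}$ (Lemma~\ref{lem:oracle-plugin-gap}); it has no dependence on $\beta_g$, so multiplying it by $(\source(x)-\source(\xhl))^2\lesssim\Delta_{H,\ell}^{2\beta_{\cS}}$ cannot produce $\Delta_{H,\ell}^{2\beta_g\beta_{\cS}}$. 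As written, the third term of the claimed bound is not derived anywhere.

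The second gap concerns the slopes. In your expansion of $\wtess-\gabh$, the source-error bracket is multiplied by $\wahl$ and the first term by $(\wahl-\oahl)$, yet you never bound these coefficients. The paper needs a dedicated high-probability control $\max_{\ell}|\oahl|^2\lesssim\log(L_H/\delta)$ (Lemma~\ref{lem:hp_oahl}), and it is precisely this factor, multiplied by the stochastic part of the source sup-norm error, that produces the $\log(L_H/\delta)\bigl(1+\log(L_H/\delta)\bigr)\,n_{\cS}^{-2\beta_{\cS}/(2\beta_{\cS}+d)}$ term. Attributing the extra logarithms to a net argument inside the uniform bound for $\wsource$ is not where they come from, and it leaves $\wahl^2$ uncontrolled, so your bound on the "pure source rate" contribution is unjustified as stated. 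Relatedly, the hypothesis $n_{\cT_1}h^d\overline h\gtrsim\log(1/\delta)$ in the theorem is exactly what makes the oracle-versus-plug-in comparison (and the effective-sample-size control behind it) valid; your sketch never invokes it, which is a further sign that the plug-in step is incomplete. To repair the proof you would need analogues of Lemmas~\ref{lem:cellwise_ols_tail_union}, \ref{lem:oracle-plugin-gap} and \ref{lem:hp_oahl}, i.e.\ the cellwise tail bound with its bias term, the quantitative oracle/plug-in gap, and the uniform slope bound.
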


\begin{rem}[Interpretation of the error decomposition]
The bound~\eqref{Eq:bound_per-tessellation-L2} decomposes the excess risk
into three main contributions:
\textbf{(i)} a transfer approximation error governed by the local smoothness
$\beta_{\mathrm{loc}}$,
\textbf{(ii)} a target-side estimation error controlled by the effective
sample size $n_{\cT_1}h^d\overline h$, and
\textbf{(iii)} a source plug-in error corresponding to the minimax rate for
estimating the source regression function.
\end{rem}

\noindent
Integrating the high-probability bound of
Theorem~\ref{Thm:fixed_tessellation_rate} with respect to the confidence
parameter yields the following expectation bound.

\begin{cor}[Expectation bound for a fixed tessellation]
\label{Cor:fixed_tessellation_rate_exp}
Let $H\in\cH$ be an admissible tessellation and suppose that the assumptions
of Theorem~\ref{Thm:fixed_tessellation_rate} hold. Then,
\begin{equation}\label{Eq:bound_per-tessellation-L2_explicitlogs}
\E\big[\cR(\wtess)-\cR(\target)\big]
\lesssim
L_H^{-2(1+\beta_{\mathrm{loc}})/d}
+
\frac{1+\log(L_H)}{n_{\cT_1}h^d\overline h}
+
\bigl(1+\log(L_H)\bigr)^2\,
n_{\cS}^{-\frac{2\beta_{\cS}}{2\beta_{\cS}+d}}.
\end{equation}
\end{cor}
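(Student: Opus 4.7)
The plan is to integrate the high-probability bound of Theorem~\ref{Thm:fixed_tessellation_rate} into an expectation bound via the tail-integration identity $\E[Z]=\int_0^\infty \P(Z>t)\,dt$. Under the smooth transfer–source regime (cf.\ Remark~\ref{Rem:smooth_regime}), the term $L_H^{-2\beta_g\beta_\cS/d}$ is absorbed, so Theorem~\ref{Thm:fixed_tessellation_rate} gives, writing $Z:=\cR(\wtess)-\cR(\target)\ge 0$ and abbreviating $A:=L_H^{-2(1+\betaloc)/d}$, $B:=L_H/n_{\cT_1}$, $C:=n_{\cS}^{-2\beta_\cS/(2\beta_\cS+d)}$,
\[
\P\bigl(Z>u(\delta)\bigr)\le\delta,\qquad u(\delta):=C_1 A+C_2 B\log(1/\delta)+C_3 C\log(L_H/\delta)\bigl(1+\log(L_H/\delta)\bigr),
\]
valid for all $\delta\ge\delta_{\min}\asymp\exp(-c\,n_{\cT_1}h^d\overline h)$.

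I then split
\[
\E[Z]\le u(1)+\int_{\delta_{\min}}^1 \delta\,|u'(\delta)|\,d\delta+\E\!\left[Z\,\mathbf 1_{\{Z>u(\delta_{\min})\}}\right].
\]
A direct calculation gives $u(1)\lesssim A+C(1+\log L_H)^2$. For the middle term, using $|u'(\delta)|\le C_2 B/\delta+C_3 C(1+2\log(L_H/\delta))/\delta$ together with $\int_0^1\log(L_H/\delta)\,d\delta=1+\log L_H$, one obtains the bound $B+C(1+\log L_H)$. Summing yields precisely the three terms announced in~\eqref{Eq:bound_per-tessellation-L2_explicitlogs}.

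For the residual tail $\E[Z\,\mathbf 1_{\{Z>u(\delta_{\min})\}}]$, I rely on a crude polynomial moment bound on $\wtess$: since $\target$ is bounded (Hölder on $[0,1]^d$) and the cellwise coefficients $(\wahl,\wbhl)$ admit polynomial moments under the sub-exponential Assumption~\ref{Ass:noise} and the minimum-mass condition of Definition~\ref{Def:admissible-partitions}(i), Cauchy–Schwarz gives $\E[Z\,\mathbf 1_{\{Z>u(\delta_{\min})\}}]\le\delta_{\min}^{1/2}\,\E[Z^2]^{1/2}$, which is exponentially small in $n_{\cT_1}h^d\overline h$ and thus absorbed into the main terms.

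The main obstacle will be establishing the polynomial moment bound on $\wtess$, which requires controlling the cellwise least-squares coefficients when the local Gram matrix may be ill-conditioned; this is handled via Assumption~\ref{Ass:H_local-design} together with a standard truncation argument on the target responses. The remaining tail-integration calculus is routine.
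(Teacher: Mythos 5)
Your proposal is correct and takes essentially the same route as the paper: the paper's own proof applies Theorem~\ref{Thm:fixed_tessellation_rate} with $\delta=e^{-t}$ and integrates the resulting tail bound in $t$, which is exactly your tail-integration of $u(\delta)$, with the factor $\log(L_H/\delta)\bigl(1+\log(L_H/\delta)\bigr)$ producing $(1+\log L_H)^2$ after integration and the term $L_H^{-2\beta_g\beta_{\cS}/d}$ absorbed in the smooth transfer--source regime assumed throughout that part of the section. Your extra handling of the regime $\delta<\delta_{\min}$ (where the theorem's bound is unavailable) via Cauchy--Schwarz and a polynomial moment bound on $\wtess$ is a refinement the paper leaves implicit, and it is sound provided the exponentially small remainder is indeed dominated by the main terms.
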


\begin{proof}
Apply Theorem~\ref{Thm:fixed_tessellation_rate} with $\delta=e^{-t}$ and
integrate the resulting tail bound over $t\ge 0$.
\end{proof}

\medskip
Finally, under the quasi-uniformity assumption, a natural choice of
bandwidths is $h^d\overline h\asymp 1/L_H$, which leads to the following
simplified rate.

\begin{cor}[Effective sample size regime]
\label{Cor:fixed_tessellation_rate_plugin}
Let $H\in\cH$ be an admissible tessellation.
Fix $\delta\in(0,1)$ and suppose that the assumptions of
Theorem~\ref{Thm:fixed_tessellation_rate} hold.
Assume moreover that Assumption~\ref{Ass:ESS_plugin} holds at level
$\delta$, and that $h^d\overline h\asymp 1/L_H$.
Then, with probability at least $1-\delta$,
\begin{equation}\label{Eq:bound_per-tessellation-L2_plugin}
\cR(\wtess)-\cR(\target)
\lesssim
L_H^{-2(1+\beta_{\mathrm{loc}})/d}
+
\frac{L_H}{n_{\cT_1}}
\log\Big(\frac{L_H}{\delta}\Big)
+
\log\Big(\frac{L_H}{\delta}\Big)
\Bigl(1+\log\Big(\frac{L_H}{\delta}\Big)\Bigr)
\,n_{\cS}^{-\frac{2\beta_{\cS}}{2\beta_{\cS}+d}} .
\end{equation}
\end{cor}

\begin{proof}
The result follows directly from
Theorem~\ref{Thm:fixed_tessellation_rate} upon substituting
$h^d\overline h\asymp 1/L_H$.
\end{proof}

\subsubsection{Empirical risk minimization and risk bound for the transfer estimator}

\noindent
We now analyze the additional error induced by the data-driven selection of an admissible tessellation. While the previous section provides risk bounds for transfer estimators with a \emph{fixed} tessellation, in practice the tessellation is chosen from a \textbf{finite} collection $\cH$ using an independent validation sample. This model selection step introduces an additional estimation error, which we quantify below.\\
For any $H\in\cH$, let $\wtess$ denote the transfer estimator constructed
on the tessellation $H$, and define its population risk by
\begin{equation}
\cR(H)
:=
\cR(\wtess)
=
\E\left[(Y-\wtess(X))^2\right].
\end{equation}
Let $\wh\cR(H)$ denote the corresponding empirical risk computed on the
validation sample $\cT_2$.
The selected tessellation is defined via the empirical risk minimization
rule
\begin{equation}
\wh H \in \argmin_{H\in\cH} \wh\cR(H).
\end{equation}

\medskip
\noindent
\noindent
\textbf{Oracle inequality in expectation.}
We first establish an oracle inequality for this empirical selection
procedure in expectation.
This result relies only on finite-moment assumptions and is therefore
compatible with sub-exponential noise.

\begin{prop}[ERM oracle inequality in expectation]\label{Prop:ERM_expectation}
Let $H^{\mathrm{or}}\in\arg\min_{H\in\cH}\cR(H)$.
Assume that $\E[\varepsilon^4]<\infty$ and that the validation sample $\cT_2$ is
independent of the data used to construct $\{\wtess:H\in\cH\}$.
Assume moreover that Assumptions~\ref{Ass:noise},
\ref{Ass:H_local-design}, \ref{Ass:bounded_features} and
\ref{Ass:ESS_plugin} hold.
Then there exists a universal constant $\frc>0$ such that
\begin{equation*}
\E\left[\cR(\wh H)\right]
\le
\cR(H^{\mathrm{or}})
+
\frc\,\sqrt{\frac{|\cH|}{n_{\cT_2}}}.
\end{equation*}
\end{prop}

\begin{proof}
The proof is postponed to Appendix Section \ref{App:ERM_expectation}%~B.3.1 in \cite{suppmatHBL2026}.
\end{proof}

\medskip
\noindent
We now combine the above oracle inequality with the fixed-tessellation
risk bounds (Theorem \ref{Thm:fixed_tessellation_rate}) derived previously to obtain a bound for the selected
transfer estimator.

\begin{thm}[Oracle inequality for the transfer estimator]\label{thm:transfer-L2}
Assume that the class $\cH$ of admissible tessellations is finite.
Suppose that Assumptions~\ref{Ass:local_transfer}, \ref{Ass:design},
\ref{Ass:regularity}, \ref{Ass:noise},~\ref{Ass:local-design} and \ref{Ass:bounded_features} hold. Then there exists a constant $\frc>0$ such that
\begin{equation}\label{eq:transfer-risk-final}
\E\!\left[\cR(\wTL)-\cR(\target)\right]
\le
\frc\inf_{H\in\cH}
\Bigg\{
L_H^{-2(1+\beta_{\mathrm{loc}})/d}
+
\frac{L_H}{n_{\cT}}
+
n_{\cS}^{-\frac{2\beta_{\cS}}{2\beta_{\cS}+d}}
\Bigg\}
+
\frc\,\sqrt{\frac{|\cH|}{n_{\cT}}}.
\end{equation}
Here $\wTL := \wtarget^{\wh H}$ denotes the transfer estimator associated
with the selected tessellation $\wh H$.
\end{thm}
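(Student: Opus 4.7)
The plan is to combine the oracle inequality for empirical risk minimization (Proposition~\ref{Prop:ERM_expectation}) with the fixed-tessellation expected-risk bound (Corollary~\ref{Cor:fixed_tessellation_rate_exp}). The structural point is that the validation sample $\cT_2$ is independent of $\cS\cup\cT_1$, so conditionally on the data used to build the candidate estimators $\{\wh{f}_{\cT}^{H}:H\in\cH\}$, the population risks $\cR(H)=\E[(Y-\wh{f}_{\cT}^{H}(X))^2\mid \cS,\cT_1]$ are deterministic and the data-driven choice $\wh H$ reduces to standard empirical risk minimization over a finite class, to which Proposition~\ref{Prop:ERM_expectation} applies.

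First I would apply Proposition~\ref{Prop:ERM_expectation} (after conditioning on $\cS\cup\cT_1$ and then taking outer expectation) to obtain
\[
\E[\cR(\wh H)]\le \E[\cR(H^{\mathrm{or}})]+\frc(\sigma^2+\cR_{\max})\sqrt{|\cH|/n_{\cT_2}},
\]
where $H^{\mathrm{or}}\in\argmin_{H\in\cH}\cR(H)$. Subtracting the deterministic quantity $\cR(\target)$ and using the defining property of $H^{\mathrm{or}}$ yields, for every deterministic $H_0\in\cH$, the pathwise bound $\cR(H^{\mathrm{or}})-\cR(\target)\le \cR(\wh{f}_{\cT}^{H_0})-\cR(\target)$, so that in expectation
\[
\E[\cR(H^{\mathrm{or}})-\cR(\target)]\le \inf_{H_0\in\cH}\E[\cR(\wh{f}_{\cT}^{H_0})-\cR(\target)].
\]

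Second, I would invoke Corollary~\ref{Cor:fixed_tessellation_rate_exp} (an expectation-level consequence of Theorem~\ref{Thm:fixed_tessellation_rate}, obtained by integrating the sub-exponential tail) uniformly in $H_0\in\cH$, giving
\[
\E[\cR(\wh{f}_{\cT}^{H_0})-\cR(\target)]\lesssim L_{H_0}^{-2(1+\beta_{\mathrm{loc}})/d}+\frac{L_{H_0}}{n_{\cT_1}}+(1+\log L_{H_0})^2\,n_{\cS}^{-\frac{2\beta_{\cS}}{2\beta_{\cS}+d}}.
\]
Combining the two displays, using the sample-splitting identities $n_{\cT_1},n_{\cT_2}\asymp n_{\cT}/2$, and minimising over $H_0\in\cH$ recovers exactly the infimum and selection terms appearing in~\eqref{eq:transfer-risk-final}, with numerical constants absorbed into $\frc$.

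The main obstacle is the logarithmic factor $(1+\log L_{H_0})^2$ decorating the source term in Corollary~\ref{Cor:fixed_tessellation_rate_exp}, which is absent from the statement of~\eqref{eq:transfer-risk-final}. I would dispose of it by appealing to the built-in complexity cap $L_{H_0}\le L_{\max}$ of the procedure, so that $\log L_{H_0}\le \log L_{\max}$ is a bounded constant absorbed in $\frc$ (treating $L_{\max}$ as a fixed parameter of the method). A secondary verification is that Assumption~\ref{Ass:noise} (sub-exponential noise) entails the finite fourth moment $\E[\varepsilon^4]<\infty$ required to apply Proposition~\ref{Prop:ERM_expectation}, which is immediate from $\|\varepsilon\|_{\psi_1}<\infty$. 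Everything else is routine bookkeeping of constants.
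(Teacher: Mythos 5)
Your proposal is correct and follows essentially the same route as the paper: condition on $\cS\cup\cT_1$, apply Proposition~\ref{Prop:ERM_expectation} to the finite class $\cH$, and bound $\E[\cR(H^{\mathrm{or}})-\cR(\target)]$ by the fixed-tessellation expectation bounds derived from Theorem~\ref{Thm:fixed_tessellation_rate}, using $n_{\cT_1},n_{\cT_2}\asymp n_{\cT}$. Your explicit handling of the $(1+\log L_H)^2$ factor via the cap $L_{\max}$ (and the observation that sub-exponential noise gives $\E[\varepsilon^4]<\infty$) is a sensible way to reconcile Corollary~\ref{Cor:fixed_tessellation_rate_exp} with the log-free source term in \eqref{eq:transfer-risk-final}, a point the paper passes over silently.
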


\noindent
The bound~\eqref{eq:transfer-risk-final} is an oracle inequality showing that the selected transfer estimator achieves, up to a model-selection penalty, the performance of the best estimator associated with a tessellation in $\cH$.

\medskip
\noindent
\textbf{High-probability selection via median-of-means.}
While the expectation bound above is sufficient for our purposes, a
high-probability oracle inequality can be obtained under the same moment
assumptions by replacing the empirical risk $\wh\cR(H)$ with a
median-of-means (MoM) version.
To this end, partition the validation sample $\cT_2$ into $B$ disjoint
blocks of equal size.
For any $H\in\cH$, let $\wh{\cR}_{\mathrm{MoM}}(H)$ denote the median of the
blockwise empirical means of the squared validation loss
\begin{equation*}
\cL_H(X,Y):=\bigl(Y-\wtess(X)\bigr)^2 .
\end{equation*}
The selected tessellation is then defined by
\begin{equation}
\wh H_{\mathrm{MoM}}
\in
\argmin_{H\in\cH}\wh{\cR}_{\mathrm{MoM}}(H).
\end{equation}

\medskip
\noindent
The following proposition provides a high-probability oracle inequality
for the MoM-based selection step.

\begin{prop}[Oracle inequality for MoM-based Tessellation selection]
\label{Prop:ERM_MoM}
Assume that $\E[\varepsilon^4]<\infty$ and that Assumptions~\ref{Ass:bounded_features} and \ref{Ass:H_local-design} hold.
Then there exist universal constants $\frc,\frc'>0$ such that for any $\delta\in(0,1)$ and any integer
$B$ satisfying
\begin{equation*}
B \ge \frc\,\log\!\left(\frac{|\cH|}{\delta}\right),
\end{equation*}
the median-of-means selected tessellation $\wh H_{\mathrm{MoM}}$ satisfies, with probability at least $1-\delta$,
\begin{equation*}
\cR(\wh H_{\mathrm{MoM}})
\le
\min_{H\in\cH}\cR(H)
+
\frc'
(\sigma^2+\cR_{\max})
\sqrt{\frac{\log(|\cH|/\delta)}{n_{\cT_2}}},
\end{equation*}
where $\cR_{\max}:=\max_{H\in\cH}\cR(H)$ and $\sigma^2:=\E[\varepsilon^2]$.
\end{prop}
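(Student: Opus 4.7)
The plan is to reduce the proposition to a uniform deviation bound for the median-of-means estimator of the risk $\cR(H)$ over the finite family $\cH$, and then to apply the standard oracle-inequality argument for empirical risk minimization. Throughout, we condition on the data used to construct the estimators $\{\wtess: H\in\cH\}$ (namely $\mathcal D_{\cS}$ and $\mathcal D_{\cT_1}$), so that the validation sample $\cT_2$ is i.i.d.\ and independent of the family of candidate estimators, and the risks $\cR(H)$ become deterministic quantities.

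First, I would control the variance of the validation loss $\cL_H(X,Y)=(Y-\wtess(X))^2$ for each fixed $H\in\cH$. Writing $Y=\target(X)+\varepsilon$ and using the elementary inequality $(a+b)^4\le 8a^4+8b^4$, one gets
\begin{equation*}
\Var[\cL_H(X,Y)]\le \E[\cL_H(X,Y)^2]\le 8\,\E[\varepsilon^4]+8\,\E[(\target(X)-\wtess(X))^4].
\end{equation*}
Since $\target$ is bounded (the responses and the transfer functions take values in $[0,1]$), one may either assume or enforce by truncation that $\wtess$ is uniformly bounded by a constant depending only on $\cR_{\max}$, so that $(\target-\wtess)^4\lesssim(\target-\wtess)^2$, which gives $\E[(\target-\wtess)^4]\lesssim \cR(H)\le \cR_{\max}$. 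Combining, $\Var[\cL_H]\lesssim \sigma^2+\cR_{\max}$ uniformly in $H$, where $\sigma^2$ absorbs $\E[\varepsilon^4]$ under the sub-exponential noise assumption (Assumption~\ref{Ass:noise}).

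Next, I would apply the classical median-of-means concentration inequality (Lugosi--Mendelson, Devroye--Lerasle--Lugosi--Oliveira): for i.i.d.\ random variables with variance $v$, partitioned into $B$ blocks out of $n_{\cT_2}$ observations, the median of the blockwise means deviates from its mean by at most $\frc\sqrt{vB/n_{\cT_2}}$ with probability at least $1-e^{-B/8}$. Applied to each $\cL_H$ for $H\in\cH$ and combined with a union bound over the finite family $\cH$, this yields
\begin{equation*}
\P\Bigl(\sup_{H\in\cH}\bigl|\wh\cR_{\mathrm{MoM}}(H)-\cR(H)\bigr|>\frc\sqrt{(\sigma^2+\cR_{\max})B/n_{\cT_2}}\Bigr)\le |\cH|\,e^{-B/8}.
\end{equation*}
Choosing $B=\lceil 8\log(|\cH|/\delta)\rceil$, which is exactly the lower bound imposed in the proposition, makes the right-hand side at most $\delta$, so that with probability at least $1-\delta$,
\begin{equation*}
\sup_{H\in\cH}\bigl|\wh\cR_{\mathrm{MoM}}(H)-\cR(H)\bigr|\le \frc'\sqrt{(\sigma^2+\cR_{\max})\,\log(|\cH|/\delta)/n_{\cT_2}}.
\end{equation*}

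Finally, I would conclude by the standard ERM argument on the event just constructed. For any $H\in\cH$, by the definition of $\wh H_{\mathrm{MoM}}$ as a minimizer of $\wh\cR_{\mathrm{MoM}}$,
\begin{equation*}
\cR(\wh H_{\mathrm{MoM}})-\cR(H)\le \bigl[\cR(\wh H_{\mathrm{MoM}})-\wh\cR_{\mathrm{MoM}}(\wh H_{\mathrm{MoM}})\bigr]+\bigl[\wh\cR_{\mathrm{MoM}}(H)-\cR(H)\bigr]\le 2\sup_{H'\in\cH}\bigl|\wh\cR_{\mathrm{MoM}}(H')-\cR(H')\bigr|.
\end{equation*}
Taking the infimum over $H\in\cH$ and combining with the deviation bound yields the claimed oracle inequality. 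The main technical obstacle is the variance control in the first step: the sub-exponential noise gives $\E[\varepsilon^4]\lesssim\sigma^4<\infty$ immediately, but bounding the fourth moment of $\target-\wtess$ requires either an a priori truncation of $\wtess$ or a uniform sup-norm bound on the transfer estimator, a point that is implicit in the statement through the appearance of $\cR_{\max}$. Once this quadratic-in-deviation control is in place, the MoM concentration and the union bound are routine and lead to the announced rate $\sqrt{\log(|\cH|/\delta)/n_{\cT_2}}$.
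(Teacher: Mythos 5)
Your overall architecture is essentially the paper's: a second-moment bound on the validation loss $\cL_H(X,Y)=(Y-\wtess(X))^2$, a per-$H$ median-of-means deviation bound (which you import from the Lugosi--Mendelson/Devroye et al.\ literature, while the paper rederives it elementarily via Chebyshev on each block plus a Chernoff bound on the number of ``bad'' blocks), a union bound over the finite class $\cH$ with $B\gtrsim\log(|\cH|/\delta)$, and the standard ERM comparison $\cR(\wh H_{\mathrm{MoM}})\le\cR(H^{\mathrm{or}})+2\sup_{H}|\wh\cR_{\mathrm{MoM}}(H)-\cR(H)|$. Citing the MoM inequality rather than reproving it is a harmless shortcut.

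There is, however, a concrete slip in the variance step which, as written, prevents your argument from delivering the stated inequality. You claim $\Var[\cL_H]\lesssim\sigma^2+\cR_{\max}$, asserting that ``$\sigma^2$ absorbs $\E[\varepsilon^4]$''; but under the sub-exponential noise assumption (Assumption~\ref{Ass:noise}) one only has $\E[\varepsilon^4]\lesssim\sigma^4$, so the correct scale is $\Var[\cL_H]\le\E[\cL_H^2]\lesssim\sigma^4+\E[(\target(X)-\wtess(X))^4]\lesssim(\sigma^2+\cR_{\max})^2$, which is exactly the bound the paper uses. With your (too small) variance proxy, the deviation you obtain is of order $\sqrt{(\sigma^2+\cR_{\max})\log(|\cH|/\delta)/n_{\cT_2}}$, which neither matches nor implies the claimed $(\sigma^2+\cR_{\max})\sqrt{\log(|\cH|/\delta)/n_{\cT_2}}$ when $\sigma^2+\cR_{\max}<1$. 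The fix is immediate: replace the variance bound by $(\sigma^2+\cR_{\max})^2$, choose $t\asymp(\sigma^2+\cR_{\max})\sqrt{B/n_{\cT_2}}$ in the blockwise Chebyshev step, and your steps two and three go through unchanged, yielding the stated oracle inequality. Your concern that bounding $\E[(\target-\wtess)^4]$ requires a truncation or sup-norm control of $\wtess$ is legitimate; note that the paper's own bound $\E[\Delta_H^4]\le(\E[\Delta_H^2])^2\le\cR_{\max}^2$ implicitly relies on the same kind of boundedness (Jensen alone goes the wrong way), so this caveat is shared with the paper rather than a defect specific to your proposal.
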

\begin{proof}
 The proof is postponed to Appendix section \ref{App:ERM_MoM} %\cite{suppmatHBL2026} B.3.2.%\ref{App:ERM_MoM}.
\end{proof}

\medskip
\noindent
We finally combine Proposition~\ref{Prop:ERM_MoM} with the
high-probability fixed-tessellation bounds (Theorem \ref{Thm:fixed_tessellation_rate}) established earlier.

\begin{thm}[Oracle risk bound for the MoM-selected transfer estimator]
\label{Thm:transfer-risk-final-MoM}
Assume that the class $\cH$ of admissible tessellations is finite.
Fix $\delta\in(0,1)$.
Suppose that the assumptions of
Theorem~\ref{Thm:fixed_tessellation_rate} hold for every $H\in\cH$.
Assume moreover that Assumption~\ref{Ass:ESS_plugin} holds at level
$\delta/(2|\cH|)$ and that $\E[\varepsilon^4]<\infty$.
\noindent
Let $\wh H_{\mathrm{MoM}}$ be defined as above with an integer $B$
satisfying
\begin{equation}
B \ge \frc\,\log\!\left(\frac{2|\cH|}{\delta}\right).
\end{equation}
Then, with probability at least $1-\delta$,
\begin{multline}\label{eq:transfer-risk-final-MoM}
\cR(\wtarget^{\wh H_{\mathrm{MoM}}})-\cR(\target)
\lesssim
\inf_{H\in\cH}
\Bigg\{
L_H^{-2(1+\beta_{\mathrm{loc}})/d}
+
\frac{L_H}{n_{\cT}}
+
n_{\cS}^{-\frac{2\beta_{\cS}}{2\beta_{\cS}+d}}
\Bigg\}
%\notag
\\+
\frc''
(\sigma^2+\cR_{\max})
\sqrt{\frac{\log(2|\cH|/\delta)}{n_{\cT}}},
\end{multline}
for some universal constant $\frc''>0$.
\end{thm}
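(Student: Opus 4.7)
The plan is to combine the high-probability oracle inequality for the MoM selection step (Proposition~\ref{Prop:ERM_MoM}) with the fixed-tessellation risk bound (Theorem~\ref{Thm:fixed_tessellation_rate}) via a union bound, and then collapse the logarithmic factors using the plug-in Assumption~\ref{Ass:ESS_plugin}. Since the data used to build $\{\wtess:H\in\cH\}$ (namely $\mathcal D_{\cS}$ and $\mathcal D_{\cT_1}$) is independent of the validation sample $\cT_2$ used in the MoM estimator, we can condition on the former and treat the risks $\cR(H)=\cR(\wtess)$ as deterministic quantities when applying the selection bound.

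First, I would apply Proposition~\ref{Prop:ERM_MoM} at confidence level $\delta/2$. With $B\ge \frc\log(2|\cH|/\delta)$ the selected tessellation $\wh H_{\mathrm{MoM}}$ satisfies, on an event $\cE_1$ of probability at least $1-\delta/2$,
\begin{equation*}
\cR(\wh H_{\mathrm{MoM}})
\le
\min_{H\in\cH}\cR(H)
+
\frc'(\sigma^2+\cR_{\max})\sqrt{\tfrac{\log(2|\cH|/\delta)}{n_{\cT_2}}} .
\end{equation*}
Subtracting $\cR(\target)$ on both sides and using $n_{\cT_2}\asymp n_{\cT}$ yields an oracle-type inequality for the excess risk of $\wtarget^{\wh H_{\mathrm{MoM}}}$ in terms of $\min_{H\in\cH}[\cR(\wtess)-\cR(\target)]$.

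Next, for each fixed $H\in\cH$, I would apply Theorem~\ref{Thm:fixed_tessellation_rate} at confidence level $\delta/(2|\cH|)$; since Assumption~\ref{Ass:ESS_plugin} is assumed at the same level, the effective sample size event $\cE_{\mathrm{ess}}$ holds simultaneously, so that $n_{\cT_1}h^d\overline h\gtrsim \log(2|\cH|/\delta)$ and Corollary~\ref{Cor:fixed_tessellation_rate_plugin} applies. Taking a union bound over the at most $|\cH|$ tessellations produces an event $\cE_2$ of probability at least $1-\delta/2$ on which
\begin{equation*}
\cR(\wtess)-\cR(\target)
\lesssim
L_H^{-2(1+\beta_{\mathrm{loc}})/d}
+\frac{L_H}{n_{\cT_1}}
+ n_{\cS}^{-\frac{2\beta_{\cS}}{2\beta_{\cS}+d}}
\end{equation*}
for every $H\in\cH$ simultaneously. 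Intersecting $\cE_1\cap\cE_2$ (probability at least $1-\delta$) and taking the infimum over $H\in\cH$ delivers~\eqref{eq:transfer-risk-final-MoM} up to the universal constant $\frc''$, with $n_{\cT_1}\asymp n_{\cT}$.

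The main obstacle, and the reason the statement ties a specific level to Assumption~\ref{Ass:ESS_plugin}, is the uniformization of the per-tessellation bound across $\cH$: a naive union bound at level $\delta$ would carry an extra $\log|\cH|$ inside the logarithmic factors of Theorem~\ref{Thm:fixed_tessellation_rate}, which must be absorbed into the leading terms. This is exactly what the plug-in regime $n_{\cT_1}h^d\overline h\gtrsim \log(2|\cH|/\delta)$ delivers, allowing all $\log(L_H/\delta)$-type factors to be swallowed into the implicit constant, as in the proof of Corollary~\ref{Cor:fixed_tessellation_rate_plugin}. The remaining piece is purely arithmetic: the selection penalty $\sqrt{\log(2|\cH|/\delta)/n_{\cT}}$ from Proposition~\ref{Prop:ERM_MoM} is added on top, and $\cR_{\max}=\max_{H\in\cH}\cR(H)$ is finite under the working assumptions since each $\wtess$ is a bounded cellwise affine transform of a bounded source estimator evaluated on the compact design $[0,1]^d$.
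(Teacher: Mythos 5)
Your proposal is correct and follows exactly the route the paper intends: the theorem is presented there as the direct combination of Proposition~\ref{Prop:ERM_MoM} (applied at level $\delta/2$ with the stated choice of $B$) and the fixed-tessellation bound of Theorem~\ref{Thm:fixed_tessellation_rate}, uniformized over $\cH$ by a union bound at level $\delta/(2|\cH|)$, with the logarithmic factors absorbed via Assumption~\ref{Ass:ESS_plugin} as in Corollary~\ref{Cor:fixed_tessellation_rate_plugin} and $n_{\cT_1}\asymp n_{\cT_2}\asymp n_{\cT}$. Nothing essential is missing.
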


\noindent
The bound~\eqref{eq:transfer-risk-final-MoM} is a high-probability oracle
inequality showing that the selected transfer estimator achieves, up to a
model-selection penalty, the same performance as the best estimator
associated with a tessellation in $\cH$.

\subsection{Minimax lower bound in the local linear transfer model}\label{Subsec:lower_bound}

Specifically, we work under the local linear transfer model and consider the associated minimax function class defined in \eqref{eq:minimax_function_class}. We assume that the true tessellation
$H^\star={\cA_\ell^\star : \ell \in [L^\star]}$, on which Assumption~\ref{Ass:local_transfer} holds, is known. Within this idealized setting, we characterize the fundamental limits of target regression estimation over the prescribed function class. Moreover, since centered Gaussian noise $\mathcal N(0,\sigma^2)$ is sub-Gaussian and therefore sub-exponential, it is sufficient to establish the minimax lower bound under the Gaussian noise submodel. Any lower bound derived under this restriction applies \emph{a fortiori} to the broader class of sub-exponential noise distributions.

\begin{ass}[Balanced cell allocation]
\label{Ass:lower_mass-balance}
For any $\ell\in[L^\star]$, let $p_\ell^\star := \mathbb{P}(X\in \cA_\ell^\star)$.
There exist constants $0<\mathfrak a\le \mathfrak b<\infty$ such that
\begin{equation}
\frac{\mathfrak a}{L^\star} \le p_\ell^\star \le \frac{\mathfrak b}{L^\star}.
\end{equation}
\end{ass}

\begin{thm}[Oracle lower bound for transfer risk]\label{Thm:transfer_lower_bound}
Assume that the source and target regression functions satisfy
Assumption~\ref{Ass:regularity} with parameters $\beta_{\cS}=\beta_{\cT}=\beta$.
Assume further that the oracle tessellation
$H^\star=\{\cA_\ell^\star : \ell \in [L^\star]\}$ and the associated set of
representative points $\{x_{H^\star,\ell} : \ell\in[L^\star]\}$ are known, and that
Assumptions~\ref{Ass:local_transfer}, \ref{Ass:design}, \ref{Ass:star_local-design}, and
\ref{Ass:lower_mass-balance} hold. Then there exists a constant $\frc>0$, depending only on
$d,\beta,\sigma,A_{\max},B_{\max}$ and the constants appearing in
Assumptions~\ref{Ass:design}, \ref{Ass:star_local-design}, and
\ref{Ass:lower_mass-balance}, such that
\begin{equation*}
\inf_{\widehat f}
\sup_{(\source,\target)\in\cF(H^\star,\beta,\beta,\beta_{\mathrm{loc}})}
\Big[
\cR(\widehat f)-\cR(\target)
\Big]
\ge
\frc\bigg[
\frac{\sigma^2 L^\star}{n_{\cT}}
+
(\Delta_{\min}(H^\star))^{2(1+\beta_{\mathrm{loc}})}
+
n_{\cS}^{-\frac{2\beta}{2\beta+d}}
\bigg].
\end{equation*}
In particular, the same lower bound applies to the transfer estimator
$\wTL$ defined in Algorithm~\ref{Algo:transfer}:
\begin{equation*}
\sup_{(\source,\target)\in\cF(H^\star,\beta,\beta,\beta_{\mathrm{loc}})}
\Big[
\cR(\wTL)-\cR(\target)
\Big]
\ge
\frc\bigg[
\frac{\sigma^2 L^\star}{n_{\cT}}
+
(\Delta_{\min}(H^\star))^{2(1+\beta_{\mathrm{loc}})}
+
n_{\cS}^{-\frac{2\beta}{2\beta+d}}
\bigg].
\end{equation*}
\end{thm}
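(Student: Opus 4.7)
The strategy is the classical three-fold reduction to testing: since the claim is a sum of three nonnegative rates and a sum dominates any one of its terms up to a factor $1/3$, it suffices to establish each of the three lower bounds separately by constructing an appropriate subfamily inside $\cF(H^\star,\beta,\beta,\beta_{\mathrm{loc}})$ and applying Assouad's lemma or Le Cam's two-point method. The bound for the specific estimator $\wTL$ is then immediate, since the worst-case risk of any particular estimator dominates the $\inf\sup$ minimax risk.

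\medskip\noindent
For the parametric cellwise rate $\sigma^2 L^\star/n_{\cT}$, I would freeze $\source$ to a fixed smooth $\beta$-Hölder function (e.g.\ a small affine function) and index hypotheses by $\epsilon\in\{-1,+1\}^{L^\star}$: on each cell $\cA_\ell^\star$ set $a_\ell^\star=0$ and $b_\ell^\star=\epsilon_\ell\delta$, realizing $\target^\epsilon$ as a $C^\infty$ bump of amplitude $\epsilon_\ell\delta$ equal to $\epsilon_\ell\delta$ on an inner subball of $\cA_\ell^\star$ and vanishing near the cell boundary, so that $\target^\epsilon$ is globally $\beta$-Hölder for $\delta$ small. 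Since $\source$ is fixed, only the target likelihood depends on $\epsilon$, and by Assumption~\ref{Ass:lower_mass-balance} the per-bit KL divergence is of order $n_{\cT}\,p_\ell^\star\delta^2/\sigma^2\asymp n_{\cT}\delta^2/(L^\star\sigma^2)$, while the per-bit $\L^2(\mu_X)$ separation is $\asymp\delta^2/L^\star$. Tuning $\delta^2\asymp\sigma^2 L^\star/n_{\cT}$ and applying Assouad's lemma yields the first rate. The local Hölder transfer term $\Delta_{\min}(H^\star)^{2(1+\beta_{\mathrm{loc}})}$ follows from the same construction but with bump amplitude now pinned by Assumption~\ref{Ass:local_transfer} to $\asymp\frl_{\mathrm{loc}}\Delta_{\min}^{1+\beta_{\mathrm{loc}}}$: each bump contributes $\asymp\Delta_{\min}^{d+2(1+\beta_{\mathrm{loc}})}$ to the $\L^2(\mu_X)$ separation, summing over the $L^\star\lesssim\Delta_{\min}^{-d}$ cells to $\asymp\Delta_{\min}^{2(1+\beta_{\mathrm{loc}})}$. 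A small constant rescaling of the amplitude keeps the per-bit KL below $1/2$, and a second application of Assouad's lemma closes this case. For the source nonparametric term $n_{\cS}^{-2\beta/(2\beta+d)}$, I would specialize to the identity-transfer subfamily $a^\star\equiv 1,\,b^\star\equiv 0$, so that $\target=\source$ and the problem reduces to estimating a single $\beta$-Hölder function on $[0,1]^d$ from $n_{\cS}+n_{\cT}$ noisy evaluations; the Korostelev--Tsybakov hypercube construction on a grid of mesh $\asymp n_{\cS}^{-1/(2\beta+d)}$ combined with Fano's inequality supplies the classical rate, which delivers the stated lower bound in the source-limited regime.

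\medskip\noindent
The main obstacle is not any individual testing reduction - all three are standard - but the joint enforcement of the structural constraints of $\cF(H^\star,\beta,\beta,\beta_{\mathrm{loc}})$ on each perturbed hypothesis: global $\beta$-Hölder regularity of both $\source$ and $\target$, the affine local-transfer bound with the prescribed constants $\frl_{\mathrm{loc}},\beta_{\mathrm{loc}}$, and the $\L^\infty$ controls $\|a^\star\|_\infty\le A_{\max}$, $\|b^\star\|_\infty\le B_{\max}$. Each of these independently caps the admissible bump amplitude, and the scaling constants of the three subfamilies must be chosen compatibly so that the final constant $\frc$ depends only on the quantities listed in the theorem. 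The balanced-mass Assumption~\ref{Ass:lower_mass-balance} is what converts per-cell estimates into the $L^\star$-dependent aggregate scaling, while Assumption~\ref{Ass:star_local-design} underwrites the well-posedness of the cellwise affine parameters $(a_\ell^\star,b_\ell^\star)$ whose perturbation drives the first two arguments.
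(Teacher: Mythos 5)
Your overall architecture (split the sum into three separate lower bounds and handle each by a reduction) matches the paper, and two of your three arguments are acceptable variants: for the parametric term $\sigma^2L^\star/n_{\cT}$ the paper argues cell by cell via Cram\'er--Rao/van Trees bounds for the per-cell Gaussian linear model in the coefficients $(a_\ell^\star,b_\ell^\star)$, whereas your Assouad construction with sign-indexed intercept bumps is a legitimate (arguably more standard) alternative; for the source term the paper uses a Le Cam two-point bump on $\source$ transferred through the affine relation, while your identity-transfer hypercube/Fano construction is in the same spirit (though note that with $\target=\source$ the estimator effectively sees $n_{\cS}+n_{\cT}$ observations of the same function, so your construction only yields $n_{\cS}^{-2\beta/(2\beta+d)}$ when $n_{\cT}\lesssim n_{\cS}$, a caveat you only partly acknowledge).

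The genuine gap is in your treatment of the approximation term $(\Delta_{\min}(H^\star))^{2(1+\beta_{\mathrm{loc}})}$. You propose an Assouad argument with per-cell bumps of amplitude $\asymp\frlloc\,\Delta_{\min}^{1+\beta_{\mathrm{loc}}}$ and assert that ``a small constant rescaling of the amplitude keeps the per-bit KL below $1/2$''. This cannot work: with $\asymp n_{\cT}/L^\star$ target observations per cell, the per-bit KL of such a bump is of order
\begin{equation*}
\frac{n_{\cT}}{L^\star}\,\frac{\Delta_{\min}^{2(1+\beta_{\mathrm{loc}})}}{\sigma^2},
\end{equation*}
which grows without bound as $n_{\cT}\to\infty$ for fixed $H^\star$; no rescaling by a constant independent of $n_{\cT}$ can keep it below $1/2$ while preserving the claimed separation, so a testing argument can only deliver $\min\{\Delta_{\min}^{2(1+\beta_{\mathrm{loc}})},\,\sigma^2L^\star/n_{\cT}\}$, never the $n_{\cT}$-independent additive term in the statement. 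The paper does not obtain this term from an information-theoretic reduction at all: it is an approximation-theoretic (bias) lower bound, proved by lower-bounding the Kolmogorov $2$-width of the H\"older ball $\mathrm{H\ddot{o}l}(1+\beta_{\mathrm{loc}})$ on each cell against the two-dimensional affine-in-$\source$ space $\cF_\ell$, and summing $r_\ell^{2(1+\beta_{\mathrm{loc}})+d}$ over cells; i.e.\ it quantifies the irreducible error of the piecewise-linear transfer class rather than a detection limit. If you want to keep a purely statistical route for this term you would have to restrict the estimator class (as the paper's width argument implicitly does) or rebalance the amplitude against $n_{\cT}$, in which case the term you obtain changes; as written, your second reduction does not prove the stated bound.
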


\noindent
As an immediate consequence, we obtain the following specialization for
quasi-uniform tessellations.

\begin{cor}
\label{cor:lower-bound-quasi-uniform}
Suppose the assumptions of Theorem~\ref{Thm:transfer_lower_bound} and Assumption \ref{Ass:quasi_uniform_target} hold. Then, there exists a constant $\frc>0$ such that
\begin{equation}
\inf_{\widehat f}
\sup_{(\source,\target)\in\cF(H^\star,\beta,\beta,\beta_{\mathrm{loc}})}
\big(\cR(\widehat f)-\cR(\target)\big)
\;\ge\;
\frc\Bigg[
\frac{\sigma^2 L^\star}{n_{\cT}}
+
(L^\star)^{-\frac{2(1+\beta_{\mathrm{loc}})}{d}}
+
n_{\cS}^{-\frac{2\beta}{2\beta+d}}
\Bigg].
\end{equation}
\end{cor}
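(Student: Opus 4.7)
The plan is to obtain Corollary \ref{cor:lower-bound-quasi-uniform} as an essentially immediate specialization of Theorem~\ref{Thm:transfer_lower_bound}: the only work is to translate the geometric quantity $\Delta_{\min}(H^\star)$ appearing in the lower bound into a power of $L^\star$ using the quasi-uniform mesh condition on the oracle tessellation.

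First, I would invoke Theorem~\ref{Thm:transfer_lower_bound}, whose hypotheses are all assumed here, to obtain the lower bound
\begin{equation*}
\inf_{\widehat f}
\sup_{(\source,\target)\in\cF(H^\star,\beta,\beta,\beta_{\mathrm{loc}})}
\bigl(\cR(\widehat f)-\cR(\target)\bigr)
\ \ge\
\frc_0\Bigl[
\tfrac{\sigma^2 L^\star}{n_{\cT}}
+(\Delta_{\min}(H^\star))^{2(1+\beta_{\mathrm{loc}})}
+n_{\cS}^{-\frac{2\beta}{2\beta+d}}\Bigr],
\end{equation*}
for some constant $\frc_0>0$ depending only on the parameters listed in Theorem~\ref{Thm:transfer_lower_bound}.

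Next, I would apply Assumption~\ref{Ass:quasi_uniform_target}, which provides the two-sided control $\frc_{\min}^\star (L^\star)^{-1/d}\le \Delta_{\min}(H^\star)\le \Delta_{\max}(H^\star)\le \frc_{\max}^\star (L^\star)^{-1/d}$. Using only the lower inequality, raised to the power $2(1+\beta_{\mathrm{loc}})$, gives
\begin{equation*}
(\Delta_{\min}(H^\star))^{2(1+\beta_{\mathrm{loc}})}
\ \ge\
(\frc_{\min}^\star)^{2(1+\beta_{\mathrm{loc}})}\,(L^\star)^{-\frac{2(1+\beta_{\mathrm{loc}})}{d}}.
\end{equation*}
Substituting into the bound from Theorem~\ref{Thm:transfer_lower_bound} and absorbing $(\frc_{\min}^\star)^{2(1+\beta_{\mathrm{loc}})}$ and $\frc_0$ into a single constant $\frc>0$ yields the claimed inequality.

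There is no genuine obstacle here: the only point that deserves a brief comment is that the quasi-uniformity condition is used only through its lower bound on $\Delta_{\min}(H^\star)$; the matching upper bound on $\Delta_{\max}(H^\star)$ plays no role in the lower-bound argument but is consistent with the regime in which the matching upper bound of Corollary~\ref{Cor:fixed_tessellation_rate_oracle_plugin} is stated, so that the minimax rate $(L^\star)^{-2(1+\beta_{\mathrm{loc}})/d}+\sigma^2 L^\star/n_{\cT}+n_{\cS}^{-2\beta/(2\beta+d)}$ is sharp in the well-specified quasi-uniform regime.
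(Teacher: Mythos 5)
Your proposal is correct and coincides with the paper's (implicit) argument: the corollary is stated as an immediate consequence of Theorem~\ref{Thm:transfer_lower_bound}, obtained exactly by substituting the lower bound $\Delta_{\min}(H^\star)\ge \frc_{\min}^\star (L^\star)^{-1/d}$ from Assumption~\ref{Ass:quasi_uniform_target} into the term $(\Delta_{\min}(H^\star))^{2(1+\beta_{\mathrm{loc}})}$ and absorbing the resulting constant. Your remark that only the lower half of the two-sided mesh condition is needed is accurate and does not affect the conclusion.
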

\noindent
\textbf{Minimax optimality.}
Combining Corollary~\ref{Cor:fixed_tessellation_rate_plugin} with
Corollary~\ref{cor:lower-bound-quasi-uniform}, we conclude that, under the \nameref{subsec:well-specified_local} and the
quasi-uniform tessellation assumption, and up to logarithmic factors,
the proposed transfer estimator $\wTL$ is minimax rate-optimal over
$\cF(H^\star,\beta,\beta,\beta_{\mathrm{loc}})$.

\begin{rem}[On the role of the dimension]\label{Rem:dimension}
For a fixed tessellation $H^\star$, the target-side contribution
\[
\frac{\sigma^2 L^\star}{n_{\cT}}
+
(\Delta_{\min}(H^\star))^{2(1+\beta_{\mathrm{loc}})}
\]
is parametric in $n_{\cT}$. In this sense, when the tessellation is fixed
and known, transfer learning removes the curse of dimensionality on the
target side.\\
If $L^\star$ is allowed to grow with $n_{\cT}$, for instance under a
regular tessellation with $\Delta_{\min}(H^\star)\asymp (L^\star)^{-1/d}$,
balancing the above terms yields the rate
\[
n_{\cT}^{-\frac{2(1+\beta_{\mathrm{loc}})}{2(1+\beta_{\mathrm{loc}})+d}},
\]
which exhibits an explicit dependence on the ambient dimension $d$ and is
unavoidable without further structural assumptions.
\end{rem}

\subsection{Estimation of the transfer function under an oracle tessellation}
\label{Subsec:transfer_function_bound}

In this section, we study the estimation error of the transfer function in an oracle setting corresponding to the \nameref{subsec:well-specified}, where the true tessellation $H^\star={\cA_\ell^\star:\ell\in[L^\star]}$ satisfying Assumption~\ref{Ass:transfer_function} is assumed to be known. This assumption allows us to focus exclusively on the statistical complexity of estimating the transfer map, abstracting away from the additional error induced by tessellation selection.
This assumption allows us to focus exclusively on the statistical complexity of
estimating the transfer map, abstracting away from the additional error induced by
tessellation selection. The transfer function admits the cellwise representation
\begin{equation}
g(x,y)
:=
\sum_{\ell\in[L^\star]} g_\ell^\star(y)\mathbf 1_{\cA_\ell^\star}(x).
\end{equation}
Accordingly, for each $\ell\in[L^\star]$ we consider the regression model
\begin{equation}\label{Eq:regression_wsource}
Y_i=g_\ell^\star\big(\source(X_i)\big)+\varepsilon_i,
\qquad
i\in\cT_1,
\end{equation}
where $\E[\varepsilon_i| X_i]=0$ and the noise on the target sample satisfies
Assumption~\ref{Ass:noise}, namely $\|\varepsilon_i\|_{\psi_1}\le\sigma_{\cT}$ for all
$i\in\cT_1$.
\noindent
Since the true source score $\source(X_i)$ is unknown, we approximate it by its nonparametric estimator $\wsource(X_i)$ and estimate $g_\ell^\star$ via a local regression of $Y_i$ on this plug-in covariate. This induces an additional source plug-in error that will be quantified below.
Using the estimated source score $\wsource$, we define the estimator
\begin{equation}
\widehat g_{\wsource}(x,y)
:=
\sum_{\ell\in[L^\star]} \widehat g_{\wsource,\ell}(y)\mathbf 1_{\cA_\ell^\star}(x),
\qquad
\widehat g_{\wsource,\ell}(y)
:=
\widehat a_\ell\big[y-\wsource(x_{\ell})\big]+\widehat b_\ell,
\end{equation}
where $(\widehat a_\ell,\widehat b_\ell)$ is obtained by local least squares around the
representative point $x_{\ell}$ of the cell $\cA_\ell^\star$, namely
\begin{multline}\label{Eq:LS_regression_wsource_df}
(\widehat b_{\ell},\widehat a_{\ell})
\in
\argmin_{b,a\in\R}
\Bigg\{\sum_{i\in\cT_1}
\Big[Y_i- a\big(\wsource(X_i)-\wsource(x_{\ell})\big)-b\Big]^2\\
\times
K_{x,h}(\|X_i-x_{\ell}\|)
\, K_{z,\overline h}\big(|\wsource(X_i)-\wsource(x_{\ell})|\big)
\Bigg\}.
\end{multline}
\noindent
Here, $K_x:\R_+\to\R$ and $K_z:\R_+\to\R$ are bounded kernels supported on $[0,1]$
and satisfying Assumption~\ref{Ass:kernels}.
The bandwidths $h>0$ and $\overline h>0$ define the rescaled kernels
$K_{x,h}=h^{-d}K_x(\cdot/h)$ and
$K_{z,\overline h}=\overline h^{-1}K_z(\cdot/\overline h)$.

\noindent
Throughout this section, we assume that
Assumptions~\ref{Ass:design_source}, \ref{Ass:regularity_source},
\ref{Ass:noise}, and~\ref{Ass:kernels} hold.
In particular, these assumptions ensure uniform control of the source estimator
$\wsource$ in sup-norm.
Specifically, for any $\delta_{\cS}\in(0,1)$, there exist constants
$\frc_{\cS}',\frc_{\cS}>0$ and an event $\cE_{\cS}$ with
$\P(\cE_{\cS})\ge 1-\delta_{\cS}$ on which
\begin{equation}\label{Eq:epsS_rate}
\epsilon_{\cS}
:=
\|\wsource-\source\|_\infty
\le
\frc_{\cS}'
\Bigg(
\frac{\log(\frc_{\cS}/\delta_{\cS})}{n_{\cS}}
\Bigg)^{\frac{\beta_{\cS}}{2\beta_{\cS}+d}}.
\end{equation}

\begin{thm}[Pointwise risk bound for the transfer map]\label{Thm:global_g_risk_fixed_y}
Let $X$ be a target covariate with distribution $\mu_{\cT}$ (density $p_{\cT}$),
independent of the source sample $\cD_{\cS}$. Suppose Assumptions
\ref{Ass:design_source}, \ref{Ass:regularity_source}, \ref{Ass:noise}, and
\ref{Ass:kernels} hold.
Assume moreover that Assumptions \ref{Ass:star_local-design} and \ref{Ass:ESS_plugin} hold on each cell $\ell\in[L^\star]$ for the local estimators
\eqref{Eq:LS_regression_wsource_df}, and define for $x\in[0,1]^d$, $y_{\ell^\star(x)}:=\wsource(x_{\ell^\star(x)})$ where $\ell^\star(x)$ is the unique index such that $x\in\cA_{\ell^\star(x)}^\star$. Fix $y\in\R$ and consider the localization event
\begin{equation}\label{Eq:Ey_def}
\cE_y
:=
\Big\{|y-y_{\ell(X)}|\le \overline h\Big\}.
\end{equation}
Then, on the event $\cE_{\cS}\cap \cE_{\mathrm{ess}}\cap \cE_y$, where
$\cE_{\mathrm{ess}}$ is defined by \eqref{Eq:event_ESS}, we have
\begin{equation}\label{Eq:global_g_risk_fixed_y_bound}
\E\Big[\big(\widehat g_{\wsource}(X,y)-g(X,y)\big)^2\, \big|\, \cD_{\cS},\cD_{\cT_1}\Big]
\lesssim
\frl_g^2
\Bigg(
\frac{\log(c_{\cS}/\delta_{\cS})}{n_{\cS}}
\Bigg)^{\frac{2\beta_g\beta_{\cS}}{2\beta_{\cS}+d}}
+
\frl_g^2\,\overline h^{2\beta_g}
+
\frac{\sigma_{\cT}^2}{n_{\cT_1}h^d\overline h},
\end{equation}
where the implicit constant depends only on $\frc_2^{\mathrm{ess}}$,
$\lambda_0$, and the kernel envelopes $\|K_x\|_\infty,\|K_z\|_\infty$. In particular, with the choice $\oh = (n_{\cT_1}h^d)^{-1/(2\beta_g+1)}$, the bound \eqref{Eq:global_g_risk_fixed_y_bound} yields, on
$\cE_{\cS}\cap \cE_{\mathrm{ess}}\cap \cE_y$,
\begin{equation}\label{Eq:global_g_risk_fixed_y_rate}
\E\Big[\big(\widehat g_{\wsource}(X,y)-g(X,y)\big)^2\,\big|\, \cD_{\cS},\cD_{\cT_1}\Big]
\lesssim
\Bigg(
\frac{\log(c_{\cS}/\delta_{\cS})}{n_{\cS}}
\Bigg)^{\frac{2\beta_g\beta_{\cS}}{2\beta_{\cS}+d}}
+
(n_{\cT_1}h^d)^{-\frac{2\beta_g}{2\beta_g+1}}.
\end{equation}
Moreover, if $\sup_{\ell\in[L^\star]}|y-y_\ell|\le \overline h$, then $\cE_y$ holds
automatically and the same bound is valid (on $\cE_{\cS}\cap \cE_{\mathrm{ess}}$).
\end{thm}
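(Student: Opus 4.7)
The estimator $\widehat g_{\wsource,\ell}(y)$ is a localized affine predictor whose coefficients $(\widehat a_\ell,\widehat b_\ell)$ solve the weighted least-squares problem~\eqref{Eq:LS_regression_wsource_df} with kernel weights centered at $(x_{\ell^\star},y_\ell)$, where $y_\ell:=\wsource(x_{\ell^\star})$. The plan is to decompose, cellwise and on $\cE_{\cS}\cap\cE_{\mathrm{ess}}\cap\cE_y$, the pointwise error $\widehat g_{\wsource,\ell(X)}(y)-g_{\ell(X)}^\star(y)$ into three contributions matching the right-hand side of~\eqref{Eq:global_g_risk_fixed_y_bound}: (i) a Hölder linearization bias of order $\frl_g\overline h^{\beta_g}$ coming from Taylor-expanding $g_\ell^\star$ around $y_\ell$, (ii) a localized stochastic error of order $\sigma_{\cT}^2/(n_{\cT_1}h^d\overline h)$ controlled by the effective sample size, and (iii) a plug-in bias inherited from $\|\wsource-\source\|_\infty\le\epsilon_{\cS}$ under~\eqref{Eq:epsS_rate}. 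Once the cellwise bound is established uniformly in $\ell$, integrating over $X\sim\mu_{\cT}$ through the partition $H^\star$ yields~\eqref{Eq:global_g_risk_fixed_y_bound}; the announced choice $\overline h=(n_{\cT_1}h^d)^{-1/(2\beta_g+1)}$ then balances the last two terms to produce~\eqref{Eq:global_g_risk_fixed_y_rate}.

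\textbf{Steps 1--2: Hölder linearization and oracle WLS.} Introduce the affine oracle $\overline g_\ell(y):=g_\ell^\star(y_\ell)+(g_\ell^\star)'(y_\ell)(y-y_\ell)$ and set $b_\ell^\star=g_\ell^\star(y_\ell)$, $a_\ell^\star=(g_\ell^\star)'(y_\ell)$. By Assumption~\ref{Ass:regularity_transfer}, the Taylor remainder satisfies $|g_\ell^\star(y)-\overline g_\ell(y)|\lesssim\frl_g|y-y_\ell|^{\beta_g}$, which on $\cE_y$ is bounded by $\frl_g\overline h^{\beta_g}$ and gives the bias term. Writing
\begin{equation*}
\widehat g_{\wsource,\ell}(y)-g_\ell^\star(y)=(\widehat b_\ell-b_\ell^\star)+(\widehat a_\ell-a_\ell^\star)(y-y_\ell)+(\overline g_\ell(y)-g_\ell^\star(y)),
\end{equation*}
the analysis reduces to bounding $(\widehat a_\ell,\widehat b_\ell)-(a_\ell^\star,b_\ell^\star)$ via the WLS normal equations. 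Inverting the local Gram matrix via Assumption~\ref{Ass:star_local-design}, combining the bounded Lipschitz kernels of Assumption~\ref{Ass:kernels}, the sub-exponential noise of Assumption~\ref{Ass:noise}, and the lower bound $N_\ell^\star\gtrsim n_{\cT_1}h^d\overline h$ granted by $\cE_{\mathrm{ess}}$ yields the stochastic contribution $\sigma_{\cT}^2/(n_{\cT_1}h^d\overline h)$ through standard local-polynomial arguments (cf.\ \cite{fan2018local}).

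\textbf{Step 3: Plug-in perturbation.} On $\cE_{\cS}$, the source error $\|\wsource-\source\|_\infty\le\epsilon_{\cS}$ propagates into the WLS problem through three channels: the weights $K_{z,\overline h}(|\wsource(X_i)-y_\ell|)$ via the Lipschitz property of $K_z$, the design vector $(1,\wsource(X_i)-y_\ell)^\top$ additively, and the response residuals via the $\beta_g$-Hölder regularity of $g_\ell^\star$. A matrix perturbation argument, coupling the uniform lower eigenvalue of the Gram matrix from Assumption~\ref{Ass:star_local-design} with the effective-sample-size bound~\eqref{Eq:event_ESS}, shows that this propagation contributes at most $\frl_g^2\epsilon_{\cS}^{2\beta_g}$ to the cellwise squared error. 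Substituting the sup-norm rate~\eqref{Eq:epsS_rate} for $\epsilon_{\cS}^{2\beta_g}$ produces the source-dependent term in~\eqref{Eq:global_g_risk_fixed_y_bound}; assembling with Steps~1--2 and integrating over $X$ then completes~\eqref{Eq:global_g_risk_fixed_y_bound}, and the bandwidth choice gives~\eqref{Eq:global_g_risk_fixed_y_rate}.

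\textbf{Main obstacle.} The delicate point is the plug-in perturbation of Step~3. The rescaled kernel $K_{z,\overline h}$ has effective scale $\overline h^{-1}$, so naive Lipschitz bounds on the weights would inflate the source error $\epsilon_{\cS}$ by spurious $\overline h^{-1}$ or $h^{-d}$ factors and corrupt the announced rate. Controlling this requires coupling simultaneously the lower eigenvalue of the local Gram matrix with the uniform effective-sample-size bound, and extracting from the $\beta_g$-Hölder regularity of $g_\ell^\star$ the correct exponent on $\epsilon_{\cS}$ in the perturbation of the weights, the regressor, and the response. Once this plug-in propagation is under control, the remaining pieces (Hölder linearization bias, local stochastic error) follow from standard local-polynomial arguments.
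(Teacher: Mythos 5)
Your Steps 1--2 match the paper's argument: the same decomposition into $(\widehat b_\ell-b_\ell^\star)+(\widehat a_\ell-a_\ell^\star)(y-y_\ell)$ plus a Taylor remainder $\lesssim\frl_g\overline h^{\beta_g}$ on $\cE_y$, with the slope error weighted by $\overline h^2$, and variance $\sigma_{\cT}^2/(n_{\cT_1}h^d\overline h)$ obtained from the effective sample size and the Gram lower bound. The problem is Step 3, which is exactly the point you flag as the "main obstacle" but do not resolve. You propose to compare the plug-in WLS to an oracle WLS built with $\source$, propagating $\epsilon_{\cS}=\|\wsource-\source\|_\infty$ through the weights (Lipschitz $K_{z,\overline h}$), the design vector, and the response, and you assert that a matrix-perturbation argument yields a contribution of at most $\frl_g^2\epsilon_{\cS}^{2\beta_g}$. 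That mechanism cannot deliver this exponent: the kernel weights and the regressor $\wsource(X_i)-y_\ell$ are genuinely Lipschitz (linear) in the source error at scale $\overline h$, so the perturbation of the Gram matrix and score is of order $\epsilon_{\cS}/\overline h$ (this is precisely what the paper's oracle-vs-plug-in Lemma~\ref{lem:oracle-plugin-gap} gives, and it is used for Theorem~\ref{Thm:fixed_tessellation_rate}, not here). Since $\beta_g>1$ and $\overline h\to0$, one has $\epsilon_{\cS}^{\beta_g}\ll\epsilon_{\cS}/\overline h$, so the squared perturbation $\epsilon_{\cS}^2/\overline h^2$ strictly dominates $\epsilon_{\cS}^{2\beta_g}$ and would corrupt the announced bound; there is no way to "extract" the Hölder exponent $\beta_g$ from the weight or design channels, because $g_\ell^\star$ never enters them.

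The paper's proof avoids the comparison altogether, and this is the essential idea your proposal is missing. Conditionally on $\cD_{\cS}$, $\wsource$ is deterministic, and $\widehat g_{\wsource,\ell}$ is an exactly specified local linear regression of $Y$ on the covariate $\wsource(X)$ centered at $y_\ell$; the Gram regularity and effective-sample-size conditions (Assumptions~\ref{Ass:star_local-design} and~\ref{Ass:ESS_plugin}) are imposed directly on these plug-in quantities, so no perturbation of weights or design is ever needed. The source error enters through a single channel: the conditional mean of the response is $g_\ell^\star(\source(X_i))$ rather than $g_\ell^\star(\wsource(X_i))$, and the Hölder regularity of $g_\ell^\star$ bounds this discrepancy by $\frl_g\epsilon_{\cS}^{\beta_g}$, which multiplied by the $\ell_1$ weight-sum bounds (Lemma~\ref{Lem:weights_bounds_LP}: $\sum_i|W_{i,\ell}|\lesssim1$, $\sum_i|\overline W_{i,\ell}|\lesssim\overline h^{-1}$) yields exactly the $\frl_g^2\epsilon_{\cS}^{2\beta_g}$ term after squaring and multiplying the slope part by $\overline h^2$. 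The remaining bias is handled by polynomial reproduction (Lemma~\ref{Lem:regression_polynomial}) plus the Taylor remainder, and the variance by Bernstein bounds for sub-exponential noise. To repair your proof you should drop the oracle-WLS comparison and restructure Step 3 along these lines; as written, the step that carries the whole source-dependent term is unjustified and the natural execution of your plan produces a strictly worse rate.
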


\begin{proof}
The proof is postponed to Appendix section \ref{proof:thm_global_g_risk}% \cite{suppmatHBL2026} D.1.2. %\ref{proof:thm_global_g_risk}.    
\end{proof}

\noindent\textit{Connection with the global excess-risk bound.} 
Theorem ~\ref{Thm:global_g_risk_fixed_y} controls the pointwise-in-$y$ estimation
error of the transfer function, averaged over the target covariate $X$.
In the subsequent analysis, this bound is integrated over the random argument
$y=\wsource(X)$ and combined with the approximation and parametric estimation
errors arising from the tessellation structure.
This decomposition yields the source plug-in term appearing in the global
excess-risk bounds for the transfer estimator, as stated in
Theorems~\ref{Thm:fixed_tessellation_rate} and~\ref{thm:transfer-L2}.

\section{Experiments and applications}\label{sec:experiments}

\noindent
\noindent
In this section, we present numerical experiments designed to illustrate both the performance and the limitations of the proposed transfer learning approach. Throughout the study, performance is assessed in terms of the error reduction
\begin{equation}\label{eq:error_reduction}
\rm E_{\rm red}:=\frac{\rm MSE_{NW}-MSE_{(TL)^2}}{MSE_{NW}},
\end{equation}
where $\mathrm{MSE}_{\mathrm{NW}}$ and $\mathrm{MSE}_{(\mathrm{TL})^2}$ denote, respectively, the mean squared error of the classical Nadaraya-Watson estimator of $\target$ computed on the full target sample $\mathcal{T}$ (i.e., without transfer), and that of the proposed $(\mathrm{TL})^2$ estimator defined in Algorithm~\ref{Algo:transfer}. In practice, values of $\mathrm{E}_{\mathrm{red}}$ close to $1$ indicate highly effective (positive) transfer, whereas negative values correspond to negative transfer, meaning that incorporating source information deteriorates estimation accuracy.
\\
\noindent
The first step of the procedure consists in specifying the collection $\mathcal{H}$ of admissible tessellations. In this section, we focus on \textit{axis-aligned square} tessellations. More precisely, letting $d$ denote the dimension of the regressors, each cell of the partition is of the form
\begin{equation*}
\prod_{i=1}^d \left( \frac{k_i}{n_{\mathcal T}}, \frac{k_i+1}{n_{\mathcal T}} \right], 
\qquad 0 \le k_i \le n_{\mathcal T}-1.
\end{equation*}
As a consequence, the maximal number of cells is fixed and given by $L_{\max}=n_{\mathcal{T}}^d$.

\begin{rem}
The transfer estimation procedures described in Algorithms~\ref{Algo:transfer_fixed_tessellation} and~\ref{Algo:transfer} require solving an optimization problem over a finite but potentially large collection of partitions of $[0,1]^d$. A naive exhaustive search may therefore be computationally demanding. In the present numerical study, we rely on a simulated annealing algorithm \cite{recuit_simul} to perform this optimization. This choice is purely algorithmic: the optimization strategy is independent of the proposed transfer methodology, and alternative optimization schemes could equally well be employed.
\end{rem}

\noindent
\noindent
Throughout this section, all kernels are Gaussian, that is, $K$, $K_x$, and $K_z$ are taken to be Gaussian densities (e.g., $u\in\R \mapsto (2\pi)^{-1/2} e^{-u^2/2}$). The bandwidths $h$ and $\overline h$ are chosen of order $n^{-1/3}$. Since the purpose of this section is to assess the intrinsic properties of the transfer learning procedure, the influence of tuning parameter selection is not investigated.\\
It is worth noting that this choice does not coincide with the classical bandwidth optimal for mean squared error. This departure is deliberate and better aligned with the philosophy of transfer learning, whose primary objective is often variance reduction rather than pointwise optimality. Bandwidths optimized for mean squared error may increase variance through the classical bias-variance trade-off, potentially counteracting the benefits of transfer.
\medskip

\noindent
\noindent
The results reported in this section are based on 100 Monte Carlo simulations, corresponding to 100 replications for simulated data and 100 random subsamplings in the application study. All displays report the median across repetitions. The median is preferred to the mean because, in the simulated setting, the transferred estimator can occasionally exhibit a severe increase in MSE, which would disproportionately affect the average.\\
This phenomenon appears to stem from the optimization procedure (e.g., simulated annealing) rather than from the transfer learning methodology itself. Indeed, when the number of admissible tessellations is small (so that the optimal partition can be identified explicitly) such extreme errors are never observed.

\subsection{Empirical results}

To illustrate the performance of the proposed transfer learning method and to examine its behavior as the dimension increases, we consider two synthetic regression targets defined on $[0,1]^d$ with values in $\mathbb{R}$, for varying dimensions $d \geq 1$. In both experiments, the same source regression function is used, namely $\source : x \in [0,1]^d \mapsto \|x\|^2$. Both source and target covariates (denoted respectively by $X_{\mathcal S}$ and $X_{\mathcal T}$) are generated independently from the uniform distribution on $[0,1]^d$. Source outputs are simulated, for all $i \in \mathcal S$, according to $Y_i=\source(X_i)+\varepsilon_i$ with $\varepsilon_i\sim \mathcal{N}(0,0.1)$. In contrast, target outputs are generated, for $i \in \mathcal T$, as $Y_i=f_k(X_i)+\varepsilon_i$ with $\varepsilon_i\sim \mathcal{N}(0,0.1)$ where $f_k$ denotes one of the target regression functions:\vspace{2pt}\\
\noindent
\textit{Target 1:} 
$f_1 : x \in [0,1]^d \mapsto 
\mathbf{1}_{\{x_1 \ge 1/2\}} \sin(\|x\|_2)
+ \mathbf{1}_{\{x_1 < 1/2\}} e^{\|x\|_2}$.
\\
\textit{Target 2:} 
$f_2 : x \in [0,1]^d \mapsto 
\mathbf{1}_{\{\|x\|_2 \ge 1/2\}} \sin(\|x\|_2)$.

\medskip

\noindent
In all simulations, the target sample size is fixed at $n_{\mathcal T}=20$. Consequently, admissible partitions may only split the domain at points of the form $k/20$, with $k \in \{1,\dots,20\}$. This setting is favorable for the estimation of $f_1$, as its discontinuity can be exactly captured by an admissible tessellation. In contrast, the discontinuity of $f_2$ lies on a sphere of radius $1/2$, which cannot be perfectly approximated by axis-aligned partitions, resulting in an intrinsic model misspecification. \\To further assess the robustness of the method to model misspecification, we also consider the estimation of $f_1$ using partitions split at points of the form $k/19$, with $k \in \{1,\dots,19\}$. This configuration is referred to as Target 1 (misspecification) in Figure~\ref{fig:E_red_3t}. In this case, misspecification arises because the optimal partition $H^\star$ appearing in Assumption~\eqref{Ass:transfer_function} does not belong to the class of admissible tessellations explored by the algorithm.
\noindent

\begin{figure}[ht]
\centering
\begin{tikzpicture}
\node (img) at (0,0) {\includegraphics[scale=0.4]{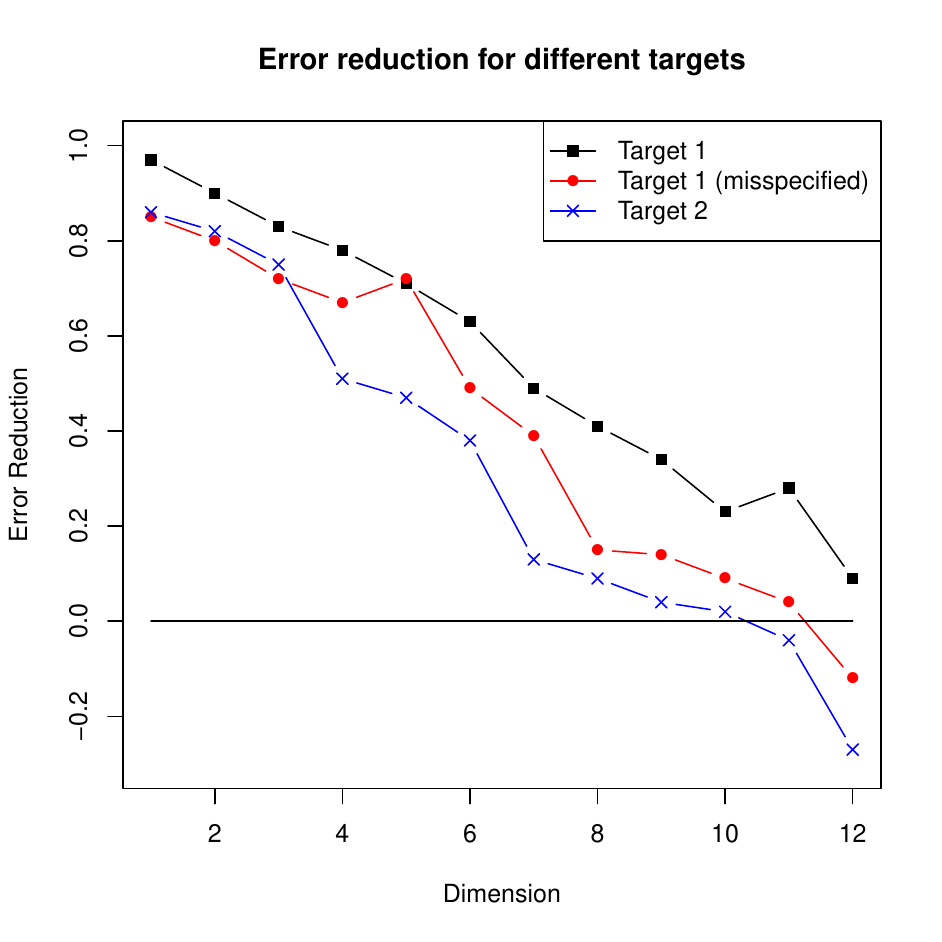}};
\draw [decorate,decoration={brace,amplitude=10pt}]
(3.5,2.6) -- (3.5,-1.0)
node[midway,right=10pt] {Positive Transfer};
\draw [decorate,decoration={brace,amplitude=10pt}]
(3.5,-1.2) -- (3.5,-2.5)
node[midway,right=10pt] {Negative Transfer};
\end{tikzpicture}
\caption{Error reduction \eqref{eq:error_reduction} for the estimation of $f_1$ and $f_2$ as a function of the regressor dimension $d$.}
\label{fig:E_red_3t}
\end{figure}

\noindent
Figure~\ref{fig:E_red_3t} reports the error reduction as a function of the dimension $d$. In these experiments, the source sample size is set to $n_{\mathcal{S}} = 100\,d$. 
\\
When the model is well specified, a substantial error reduction is observed. Nevertheless, the three curves exhibit similar qualitative behavior, decreasing monotonically as the dimension increases, highlighting the influence of the source estimator’s quality. For instance, in dimension $d=12$, only $1200$ source observations are available, which may lead to poor estimation of the source function due to the curse of dimensionality. Consequently, for Target~2 the error reduction becomes negative, indicating that transfer increases the estimation error (by approximately $26\%$ in this case).
\\
This issue can be mitigated by improving the source estimation: Table~\ref{tab:d12} shows that increasing the source sample size for $d=12$ restores a positive and increasing error reduction for Target~2.

\begin{table}
\caption{Error reduction for Target~2 as the source sample size increases, with $d = 12$. }\label{tab:d12} 
\begin{center} \begin{tabular}{|c|c|c|c|} 
\hhline{|----|}
$n_{\cS}$ & 2000 & 4000 & 6000 \\ 
\hhline{|----|}
$\rm E_{red}$ & 0.13 & 0.23 & 0.26 \\ 
\hhline{|----|}
\end{tabular} 
\end{center} 
\end{table}

\begin{rem}
These simulations illustrate the respective roles of the different terms in the error decomposition given in Equation~\eqref{eq:error_decomposition}. In particular, the decreasing behavior of the curves in Figure~\ref{fig:E_red_3t}, together with the results reported in Table~\ref{tab:d12}, highlights the impact of the term $\mathrm{Plug}_{\mathcal S}(H)$ and underscores the importance of accurately estimating the source function in order to achieve positive transfer. By contrast, the relative positions of the curves in Figure~\ref{fig:E_red_3t} reflect the influence of the approximation error $\mathrm{Approx}(H)$ and the fitting term $\mathrm{Fit}_{\mathcal T_1}(H)$. This observation emphasizes the need to identify a tessellation for which the transfer function is sufficiently smooth to enable accurate local linear estimation.
\end{rem}

\subsection{Applications}

In this section, we assess the performance of $\rm (TL)^2$ on two datasets. The first is the well-known Abalone dataset \cite{abalone}, available from the UCI Machine Learning Repository, while the second consists of daily log-returns of the AMD stock price, retrieved from the \textit{yfinance} Python package \cite{yfinance}.

\subsubsection*{Toy dataset: Abalone \cite{abalone}}
In this experiment, we consider a real-data regression problem aimed at predicting the age of abalones from a set of physical measurements. Specifically, the regression task maps $\mathbb{R}^7$ to $\mathbb{R}$, where the response variable is the abalone’s age and the covariates include its length, diameter, height, and the weights of its various organs.
The target task corresponds to estimating the age of female abalones ($n_{\mathcal T}=1307$), while observations from male abalones ($n_{\mathcal S}=1528$) are used as source data for transfer learning. In all experiments, the full source sample is used, whereas the target sample consists of $600$ females randomly subsampled from the $1307$ available observations. Performance is then assessed by computing the RMS error on the remaining $707$ female abalones.

\begin{figure}[ht]
\centering
\includegraphics[scale=0.35]{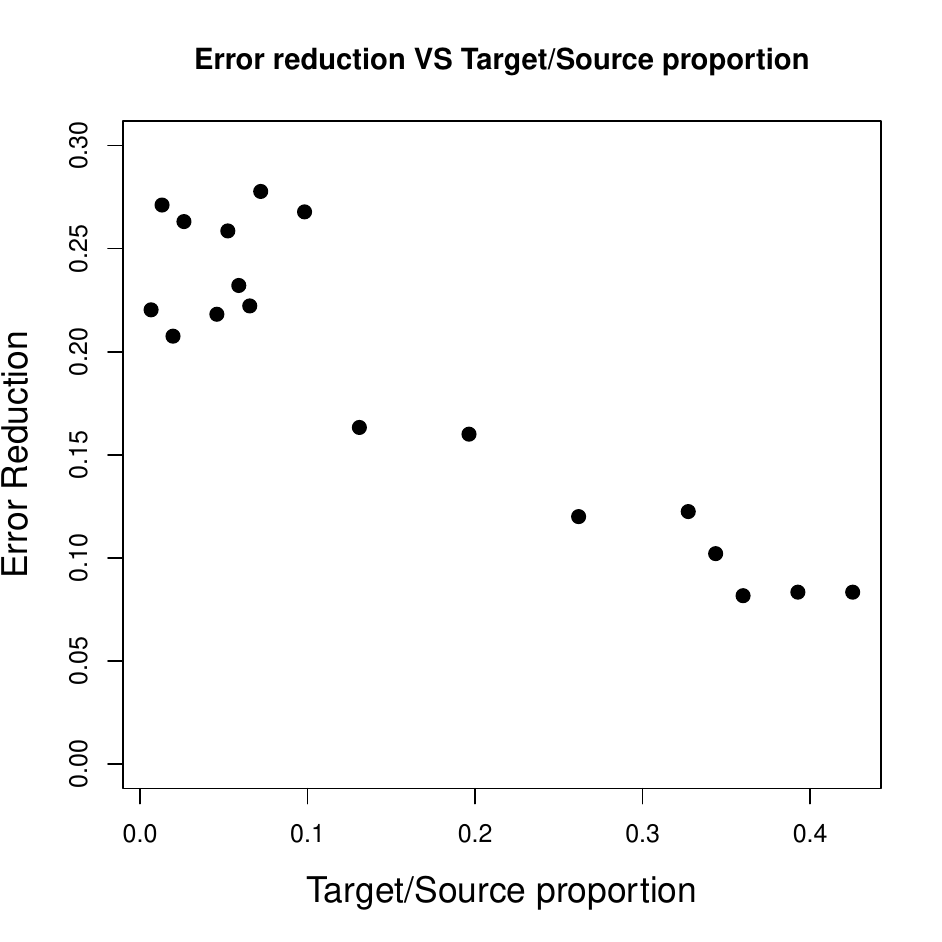}
\caption{Error reduction for the estimation of female abalone age as a function of the target sample size.}
\label{fig:abalone}
\end{figure}

\noindent
Figure~\ref{fig:abalone} illustrates the error reduction achieved by the proposed transfer learning method as the number of target observations increases. As expected, the benefit of transfer is strongest in low-sample regimes: when fewer than $100$ target observations are available, the error is reduced by approximately $26\%$. As the target sample size grows, the performance of the classical Nadaraya-Watson estimator improves, thereby reducing the marginal gain from transfer, which gradually stabilizes around $8\%$. Notably, this improvement remains positive even when the full dataset is used (about $1300$ target observations), and becomes essentially constant from a target-to-source sample size ratio of roughly $0.3$ onward.
  
\subsubsection*{Dataset : Signatures}
To illustrate the relevance of our framework on structured, high-dimensional features, we consider an application to financial time series based on signature transforms. Signatures \cite{lyons1998differential} provide a systematic nonlinear representation of sequential data, capturing temporal interactions through iterated integrals, even at low truncation orders. A key property of signature features is that a broad class of nonlinear path-dependent functionals can be approximated arbitrarily well by linear functionals of truncated signatures, as a consequence of the universality of signatures and a Stone–Weierstrass-type theorem on path space \cite{hambly2010uniqueness}. 
\\
This property explains the widespread use of linear models on top of signature features in practice \cite{chevyrev2025primer}. Although the relationship between the raw time series and the response variable may be highly nonlinear, it can often be captured by linear predictors in signature space. This makes signature-based regression particularly well suited to our transfer learning framework, which combines nonparametric estimation in the covariate space with low-complexity local linear transfer maps between source and target regression functions.
Signatures thus provide a rich feature representation in which linear transfer is expressive enough to capture local similarities between tasks, while remaining compatible with our theoretical assumptions and estimation procedures.
\\

\noindent
We consider the task of predicting AMD’s daily stock returns from its historical returns and trading volumes. The training sample comprises daily observations from 30 June 2022 to 28 July 2022 (20 observations), while the out-of-sample period spans from 29 July 2022 to 23 September 2022. Following \cite{RosenCao2023RiskOT}, we use the signature transform of log returns and log trading volumes as regression features.

\begin{definition}[Signature Transform]\label{sig_trans}
Let $X : [0,T] \to \mathbb{R}^d$ be a continuous path of bounded variation. 
The signature of $X$ up to level $M \in \mathbb{N}$ is defined as the collection of iterated integrals
\[
S(X)_{0,T}^{(M)} := \big(1, S^{(1)}(X)_{0,T}, \dots, S^{(M)}(X)_{0,T}\big),
\]
where, for each $k\in\{ 1,\dots,M\}$, the level-$k$ term is given by
\[
S^{(k)}(X)_{0,T}
:= \int_{0 < t_1 < \dots < t_k < T}
dX_{t_1} \otimes \dots \otimes dX_{t_k},
\]
which takes values in $(\mathbb{R}^d)^{\otimes k}$ and is interpreted as a \textit{Riemann-Stieltjes integral}.
\end{definition}

\noindent
More precisely, let $(l_t, v_t)_{t \ge 0}$ denote the daily return and trading volume of the AMD stock, respectively. For a fixed lag $L$, the covariate associated with the daily log return $l_t$ is defined as the signature transform of the rough path
\[
\left(\log\!\left(\frac{l_{t-i}}{l_{t-i-1}}\right),\;
      \log\!\left(\frac{v_{t-i}}{v_{t-i-1}}\right)\right)_{i=1}^{L}.
\]

\begin{table}
\caption{Error reduction for AMD stocks for different order $M$ and lags $L$.}\label{tab:signatures} 
\begin{center} \begin{tabular}{|c|c|c|c|c|c|c|} 
\hhline{|-------|}
Lag $L$ & 2 & 3 & 5 & 10 & 15 & 20 \\ 
\hhline{|-------|}
$M=2$  & 0.09 & -0.15 & 0.001 & 0.30 & -0.55 & -2.28 \\ 
\hhline{|-------|} 
$M=3$ & 0.08 & -0.02 & 0.13 & 0.36 & -0.62 & -2.61 \\ 
\hhline{|-------|} 
$M=4$ & 0.09 & -0.003 & -0.05 & 0.36 & -0.63 & -1.23\\
\hhline{|-------|}
\end{tabular} 
\end{center} 
\end{table}
\noindent
Since our theoretical framework assumes covariates supported on $[0,1]^d$, the signature features are normalized via an affine transformation so that their empirical range lies in a bounded domain, identified with $[0,1]^d$ without loss of generality.\\
Table~\ref{tab:signatures} presents the performance of the proposed transfer learning approach for different values of the truncation $M$ and the lag $L$. It reveals a strong dependence of transfer performance on the lag $L$, while showing only limited sensitivity to the truncation order $M$. The weak effect of $M$ suggests that the approximation term $\mathrm{Approx}(H)$ is already small for low-order signatures, so increasing the truncation level brings little additional benefit. In contrast, the pronounced influence of $L$ reflects a trade-off between the plug-in term $\mathrm{Plug}_{\cS}(H)$ and the fitting term $\mathrm{Fit}_{\cT}(H)$. For short lags, the lack of temporal structure limits effective transfer, whereas excessively long lags increase dimensionality and estimation error, ultimately degrading performance. The optimal results observed around $L=10$ indicate a regime in which the source function is well estimated and the transfer map remains sufficiently simple to be reliably learned. This behavior contrasts with the model studied in \cite{RosenCao2023RiskOT}, where optimal performance is achieved for much shorter lags (two or three days), highlighting the role of the underlying task and representation in determining the relevant temporal scale for transfer. Overall, these findings suggest that successful transfer primarily hinges on selecting an appropriate temporal scale, while low-order signatures already provide an adequate and robust representation.

\section{Conclusion and prospects}\label{sec:conclusion}
In this work, we introduced a regression transfer learning framework that explicitly accounts for heterogeneity in the source-target relationship. By modeling transfer locally through an oracle tessellation and cellwise transfer functions, we showed that meaningful gains can be achieved even when global similarity assumptions fail. Our analysis establishes minimax-optimal convergence rates under a well-specified local linear transfer model, together with oracle inequalities that disentangle estimation and approximation errors, thereby clarifying when and how transfer learning can mitigate the curse of dimensionality. Beyond the well-specified setting, we further show that local linear transfer yields stable and interpretable performance under mild misspecification, providing a principled safeguard against negative transfer.
\\
Despite its flexibility, the framework relies on structural assumptions that may not hold universally. In particular, the existence of a finite tessellation supporting simple local linear transfer relationships may be restrictive when source-target similarity varies continuously or lacks spatial coherence. Moreover, although our adaptive procedure removes the need to know the oracle partition, its computational cost may increase with the richness of the tessellation class, potentially limiting scalability in very high-dimensional settings. Our analysis is also restricted to regression with bounded responses; extensions to more general losses or heavy-tailed noise would require additional technical developments.
\\
Several directions for future work naturally arise. Relaxing the piecewise-constant tessellation toward smoother or hierarchical partitions could enhance flexibility while preserving interpretability. Extensions to multiple heterogeneous sources or to sequential and online transfer settings are also of interest. From a theoretical perspective, an important direction is to characterize negative transfer more precisely by identifying regimes of transferability, depending for instance on the target-to-source sample size ratio $n_{\cT}/n_{\cS}$ and on structural properties of the local linear transfer functions. Such a refined understanding could lead to sharp phase transitions delineating when transfer is beneficial, neutral, or detrimental.

% \begin{acks}[Acknowledgments]
% Benjamin Bobbia and Hélène Halconruy acknowledge the support of the French National Research Agency (ANR) under grant ANR-24-CE40-3341 (project DECATTLON).
% \end{acks}

% \begin{supplement}
% \stitle{Supplementary material for : Tesselation localized transfer learning for non parametric regression.}
% \sdescription{Appendices A and E of \cite{suppmatHBL2026} provide auxiliary results used throughout the proofs, while Appendices B, C, and D contain detailed proofs of Theorems \ref{Thm:fixed_tessellation_rate}, \ref{Thm:transfer_lower_bound}, and \ref{Thm:global_g_risk_fixed_y}, respectively.}

% \end{supplement}

\bibliographystyle{plain}
\bibliography{Transfer}

% \newpage

\appendix
This page provides a summary of the notation used throughout the appendix.

\begin{longtable}{@{}p{0.3\linewidth}p{0.65\linewidth}@{}}
\toprule
\textbf{Symbol} & \textbf{Description} \\
\hline
\endfirsthead

\multicolumn{2}{l}%
{{\bfseries \tablename\ \thetable{} - continued from previous page}} \\
\toprule
\textbf{Symbol} & \textbf{Description/definition} \\

\endhead

\bottomrule
\endfoot

\multicolumn{2}{l}{\textbf{General mathematical notation}} \\[2pt]
$\N$ & The set $\{1,2,\dots\}$ \\
$\N_0$ & $\N\cup\{0\}$ \\
$[n]$ & The set $\{1, 2, \dots, n\}$ \\
$\mathbf{1}(\cdot)$ & The indicator function \\
$I_d$ & Identity matrix of size $d\times d$\\
$a\lesssim b$ & $\exists\frc>0$ such that $a\le \frc b$ \\
$A\preceq B$& $B-A$ is positive semidefinite ($A,B$ matrices)\\
$\frc>0$ & Universal constant\\
$h\asymp g$ & $\exists \frc_1,\frc_2>0$ s.t. $\frc_1 g\le h\le \frc_2 g$\\
\multicolumn{2}{l}{\textbf{Source and target}} \\[2pt]
$\cD_{\cS}$ & Source dataset: $\{(X_i,Y_i),i\in\cS\}$ \\
$\cD_{\cT}$ & Target dataset: $\{(X_i,Y_i),i\in\cT\}$ \\
$\cT_1\subset \cT$ & Target training sample \\
$\cT_2\subset \cT$ & Target validation sample \\

\multicolumn{2}{l}{\textbf{Tessellation and cells}} \\[2pt]
$\cH$ & Set of tessellations. \\
$H=(\cA_{H,\ell})_{\ell\in[L_H]}$ & Tessellation. \\
$L_{\max}$ & maximum number of admissible cells\\
$\cA_{H,\ell}$ & Cell with index $\ell$ in tessellation $H$.\\
$\cT_1^{H,\ell}$ & $\cT_1^{H,\ell}=\{i\in\cT_1\,:\,X_i\in\cA_{H,\ell}\}$\\
$\Delta_{H,\ell}=\mathrm{diam}(\cA_{H,\ell})$\\

\multicolumn{2}{l}{\textbf{Estimators and oracles}} \\[2pt]
$\theta_{H,\ell}^\star=(a_{H,\ell}^\star,b_{H,\ell}^\star)$ & Population transfer parameters on $\cA_{H,\ell}$ \\
$\overline\theta_{H,\ell}=(\overline a_{H,\ell},\overline b_{H,\ell})$ & Least-squares estimator computed on $\cA_{H,\ell}$ from $\cT_1$ \\
$\gslin$ & Population cellwise transfer linearization\\
& $x
\mapsto 
a_{H,\ell_H(x)}^\star
\bigl(
\source(x)-\source(x_{\ell_H(x)})
\bigr)
+
b_{H,\ell_H(x)}^\star$\\
$\gabh$ & Source oracle \\
 & $x
\mapsto
\overline a_{H,\ell_H(x)}
\bigl(
\source(x)-\source(x_{\ell_H(x)})
\bigr)
+
\overline b_{H,\ell_H(x)}$\\
$\wsource$ & Nadarya-Watson estimator of $f_{\cS}$\\
$\widehat g_{H,\ell}$ & Local transfer function estimator on $\cA_{H,\ell}$\\
&$(y,y_\ell)\mapsto\widehat a_{H,\ell}(y-y_\ell)+\widehat b_{H,\ell}$\\
$\wtess$ & Transfer estimator of $f_{\cT}$ on tessellation $\cH$\\
& $x\mapsto
\widehat g_{H,\ell_{H}(x)}
\big(
\wsource(x),\,
\wsource(x_{H,\ell_{H}(x)})
\big)$\\
$\wTL$ & Final transfer estimator of $f_{\cT}$\\
& $x\mapsto \widehat g_{\widehat H,\ell_{\widehat H}(x)}
\big(
\wsource(x),\,
\wsource(x_{\widehat H,\ell_{\widehat H}(x)})
\big)$\\
\bottomrule

\end{longtable}

The supplement contains additional technical results and detailed proofs omitted from the main text. In particular, it includes auxiliary concentration and deviation inequalities, perturbation bounds, oracle slope controls, and proofs of Theorems 1-6. It also provides technical material on local polynomial and Nadaraya-Watson estimators used in the analysis.

\appendix

\tableofcontents

\section*{Definitions and assumptions}
\noindent
To enhance the readability of the appendix and facilitate navigation throughout the document, we restate in this section the definitions and assumptions introduced in the main text.

\subsection*{Definitions}

\begin{df}[Admissible tessellation class]\label{Def:admissible-partitions}
Let $h>0$ be the bandwidth used in the local transfer estimation.
Let $\mathcal H$ be a collection of \emph{tessellations}
$H = (\cA_{H,\ell})_{\ell\in[L_H]}$, where each $\cA_{H,\ell}$ is a \emph{cell} and
$L_H\in\N$ denotes the number of cells. We say that a tessellation $H$ is \emph{admissible} if it satisfies the following conditions:
\begin{enumerate}
\item[(i)] \textbf{Minimum cell mass:}  there exists $\frc_{\mathrm{mass}}>0$ such that for all $\ell\in[L_H]$,
\begin{equation*}
|\cT_1^{H,\ell}|
\ge
\frc_{\mathrm{mass}}\, n_{\mathcal T_1}\, h^{d},
\end{equation*}
where $\cT_1^{H,\ell} := \{i\in\cT_1 : X_i \in \cA_{H,\ell}\}$.
\item[(ii)] \textbf{Locality radius:} there exists $\frc_{\mathrm{rad}}>0$ such that for all $\ell\in[L_H]$,
\begin{equation*}
\operatorname{diam}(\cA_{H,\ell}) \le \frc_{\mathrm{rad}}h.
\end{equation*}
\item[(iii)] \textbf{Regular shape:}
There exists a constant $\frr_{\mathrm{loc}}>0$ such that, for each cell $\cA_{H,\ell}$, one can find a point $\xhl\in \cA_{H,\ell}$ such that
\begin{equation*}
\rB_d\big(\xhl,\frr_{\mathrm{loc}}h\big)
\subseteq
\cA_{H,\ell}.
\end{equation*}
The point $\xhl$ is referred to as the \emph{representative point} of the cell $\cA_{H,\ell}$. Since the cells need not admit a natural geometric center, we simply assume - without loss of generality- that $\xhl$ serves as a center, or more precisely an \emph{anchor point}, for $\cA_{H,\ell}$.
\end{enumerate}
\end{df}

\subsection*{Assumptions}

\begin{ass}\label{Ass:transfer_function}
There exists a partition of $[0,1]^d$ into cells
$H^\star=\{\cA_\ell^{\star} : \ell\in[L^{\star}]\}$ such that for all $\ell\in[L^{\star}]$ there exists a function $\gsl:\R\to\R$ satisfying
\begin{equation}\label{Eq:transfer_function}
\forall x\in\mathcal A_\ell^{\star},\qquad
\target(x)=\gsl(\source(x)).
\end{equation}
We define the associated \emph{transfer function}
\begin{equation*}
g:(x,y)\in[0,1]^d\times\R\mapsto\sum_{\ell\in [L^\star]}\gsl(y)\mathbf 1_{\cA_{\ell}^\star}(x).
\end{equation*}
\end{ass}

\begin{ass}[Local linear transfer]\label{Ass:local_transfer}
There exists a partition of $[0,1]^d$ into cells
$H^\star=\{\cA_\ell^{\star} : \ell\in[L^{\star}]\}$ such that for all $\ell\in[L^{\star}]$ there exist functions $a^\star,b^\star:[0,1]^d\to\R$, constants
$\frlloc>0$ and $\betaloc\in[0,1]$, such that for all
$x,x'\in[0,1]^d$,
\begin{equation}\label{Eq:local_transfer}
\big|\target(x)-\big(a^\star(x')(\source(x)-\source(x'))+b^\star(x')\big)
\big|
\le
\frlloc|x-x'|^{1+\betaloc}.
\end{equation}
\end{ass}

\begin{ass}[Design]\label{Ass:design}
We impose the following two conditions on the sampling distributions:

\begin{subassumption}[Target design]\label{Ass:design_target}
The target design points $X_i$ ($i\in \cT$) are i.i.d.
with density $p_{\cT}$ satisfying 
$p_{\cT}\in[p_{\cT}^{\min},p_{\cT}^{\max}]$ where $p_{\cT}^{\max}\ge p_{\cT}^{\min}>0$.
\end{subassumption}
\begin{subassumption}[Source design]\label{Ass:design_source}
The source design points $X_i$ ($i\in \cS$) are i.i.d.
with density $p_{\cS}$ satisfying 
$p_{\cS}\in[p_{\cS}^{\min},p_{\cS}^{\max}]$ where $p_{\cS}^{\max}\ge p_{\cS}^{\min}>0$.
\end{subassumption}
\end{ass}

\begin{ass}\label{Ass:regularity}
Recall that, for $\beta>0$, $\frl>0$, and a set $E\subset\R^d$, the H\"older class
$\mathrm{H\"ol}(\beta,\frl;E)$ consists of functions $f:E\to\R$ that are
$\lfloor\beta\rfloor$ times continuously differentiable on $E$ and whose partial
derivatives of order $\lfloor\beta\rfloor$ satisfy
\[
\bigl|\partial^\alpha f(x)-\partial^\alpha f(y)\bigr|
\le
\frl\,\|x-y\|^{\beta-\lfloor\beta\rfloor},
\qquad
\forall\,x,y\in E,
\]
for all multi-indices $\alpha$ with $|\alpha|=\lfloor\beta\rfloor$.
\begin{subassumption}[Source function regularity]\label{Ass:regularity_source}
The source regression function satisfies
\[
\source \in \mathrm{H\"ol}(\beta_{\cS},\frl_{\cS};[0,1]^d).
\]
\end{subassumption}
\begin{subassumption}[Target function regularity]\label{Ass:regularity_target}
The target regression function satisfies
\[
\target \in \mathrm{H\"ol}(\beta_{\cT},\frl_{\cT};[0,1]^d).
\]
\end{subassumption}
\end{ass}

\begin{ass}\label{Ass:noise}
As a reminder, the noise terms $\varepsilon_i$ ($i\in \cS\cup \cT$) are independent and satisfy $\mathbb{E}[\varepsilon_i | X_i] = 0$. Moreover, for any $i\in \cS\cup \cT$, the random variable $\varepsilon_i$ is sub-exponential, i.e.
\begin{equation*}
\|\varepsilon_i\|_{\psi_1}\le \sigma_{\cS}\; (i\in \cS)\quad \text{and} \quad \|\varepsilon_i\|_{\psi_1}\le \sigma_{\cT} \; (i\in \cT),
\end{equation*}
for some constants $\sigma_{\cS},\sigma_{\cT}>0$, where $\|\cdot\|_{\psi_1}$ is the Orlicz $\psi_1$-norm defined for any real random variable $X$ by
\begin{equation*}
\|X\|_{\psi_1} = \inf \left\{ C > 0 \; : \; \mathbb{E}\left[ \exp\left( \frac{|X|}{C} \right) \right] \leq 2 \right\}.
\end{equation*}
\end{ass}

\begin{ass}[Kernels]\label{Ass:kernels}
The kernels $K:\R_+\to\R$, $K_x:\R_+\to\R$ and $K_z:\R_+\to\R$ are bounded, nonnegative, and
compactly supported on $[0,1]$. Moreover, they are symmetric and satisfy
\begin{equation*}
\int_{\R} K(u)du = 1,\quad \int_{\R} K_x(u)du = 1,
\quad\text{and}\quad
\int_{\R} K_z(u)du = 1.
\end{equation*}
Moreover, $K_z$  is Lipschitz on $\R$ with constant $\frl_{K_z}$has a finite second moment:
\begin{equation*}
0 < \mu_2(K_z) := \int_{\R} u^2 K_z(u)\,du < \infty.
\end{equation*}
We define the rescaled kernels $K_{h}=h^{-d}K(\cdot/h)$,
$K_{x,h}=h^{-d}K_x(\cdot/h)$ and
$K_{z,\overline h}=\overline h^{-1}K_z(\cdot/\overline h)$ where $h>0$ and $\overline h>0$ are  bandwidths.
\end{ass}

\begin{ass}[Local design regularity on each cell]\label{Ass:local-design}
We impose the following local regularity condition on the design.

\begin{subassumption}\label{Ass:H_local-design}
\textbf{On admissible tessellations (local Gram regularity).}
For any admissible tessellation $H\in\cH$, there exist constants
$0<\lambda_{H,\min}\le \lambda_{H,\max}<\infty$ such that for all
$\ell\in[L_H]$,
\begin{equation*}
\lambda_{H,\min} I_2
\;\preceq\;
\frac{1}{|\cT_1^{H,\ell}|}
(\Psi_{H,\ell})^\top \Psi_{H,\ell}
\;\preceq\;
\lambda_{H,\max} I_2,
\end{equation*}
where $\Psi_{H,\ell}$ is the $n_{H,\ell}\times 2$ design matrix with rows
$\phi_{H,\ell}(X_i)^\top$, $i\in\cT_1^{H,\ell}$, and
\begin{equation*}
\phi_{H,\ell}(x)
=
\bigl(
1,\;
\source(x)-\source(x_{H,\ell})
\bigr)^\top,
\end{equation*}
with $\cT_1^{H,\ell}:=\{i\in \cT_1 : X_i\in\cA_{H,\ell}\}$.

\smallskip
\noindent
\textbf{Weighted Gram condition on the stability event.}
On the event $\cE_{\mathrm{ess}}$ (defined in \eqref{Eq:event_ESS}), the corresponding
\emph{weighted} Gram matrices
\begin{equation*}
G_{H,\ell}
:=
\frac{1}{n_{H,\ell}}(\Psi_{H,\ell})^\top W_{H,\ell}\Psi_{H,\ell},
\qquad
W_{H,\ell}:=\mathrm{diag}(w_{i,\ell})_{i\in\cT_1^{H,\ell}},
\end{equation*}
satisfy, for all $H\in\cH$ and all $\ell\in[L_H]$,
\begin{equation*}
\lambda_{H,\min} I_2
\;\preceq\;
G_{H,\ell}
\;\preceq\;
\lambda_{H,\max} I_2.
\end{equation*}
\end{subassumption}

\begin{subassumption}\label{Ass:star_local-design}
\textbf{On the target tessellation.}
There exist constants
$0<\lambda_{\min}^\star\le \lambda_{\max}^\star<\infty$
such that for all $\ell\in[L^\star]$,
\begin{equation}
\lambda_{\min}^\star I_2
\preceq
\frac{1}{|\cT_1^{\star,\ell}|}
(\Psi_\ell^\star)^\top \Psi_\ell^\star
\preceq
\lambda_{\max}^\star I_2,
\end{equation}
where $\Psi_\ell^\star$ is the $n_\ell\times 2$ design matrix with rows
$\phi_\ell^\star(X_i)^\top$ ($i\in\cT_1^{\star,\ell}$), and
\begin{equation}
\phi_\ell^\star(x)
=
\bigl(
1,
\source(x)-\source(x_{\ell})
\bigr)^\top,
\end{equation}
with $\cT_1^{\star,\ell}:=\{i\in \cT_1 : X_i\in\cA_\ell^\star\}$.
\end{subassumption}
\end{ass}

\begin{ass}[Uniform boundedness of local features and residuals]
\label{Ass:bounded_features}
There exist constants $\phi_{\max}, r_{\max} < \infty$ such that, for all
$H \in \cH$, all $\ell \in [L_H]$ and all $x \in \cA_{H,\ell}$,
\begin{equation*}
\|\phi_{H,\ell}(x)\|_2 \le \phi_{\max},
\qquad
|r_{H,\ell}(x)| \le r_{\max}.
\end{equation*}
\end{ass}

\begin{ass}[Cellwise lower-mass condition]\label{Ass:mass_cell}
For all $\delta\in(0,1)$ and all $H\in\cH$,
\begin{equation}
n_{\cT_1}
\min_{\ell\in[L_H]} p_{H,\ell}
\;\ge\;
8\log\Big(\frac{L_H}{\delta}\Big).
\end{equation}
\end{ass}

\begin{ass}[Quasi-uniform mesh condition]
\begin{subassumption}\label{Ass:quasi_uniform} \textbf{On admissible tessellations.}
There exists a constant $\frc_\Delta>0$ such that for all $H\in\cH$,
\begin{equation*}
\Delta_{\max}(H):=\max_{\ell\in[L_H]}\Delta_{H,\ell}
\le
\frc_\Delta\,L_H^{-1/d},
\end{equation*}
where $\Delta_{H,\ell}:=\mathrm{diam}(\cA_{H,\ell})$.
\end{subassumption}
\begin{subassumption}\label{Ass:quasi_uniform_target} \textbf{On the target tessellation.} There exist constants $0<\frc_{\min}^\star\le \frc_{\max}^\star<\infty$ such that
\begin{equation}
\frc_{\min}^\star(L^\star)^{-1/d}
\le
\Delta_{\min}(H^\star)
\le
\Delta_{\max}(H^\star)
\le
\frc_{\max}^\star(L^\star)^{-1/d},
\end{equation}
where $\Delta_{\min}(H^\star){=}\min_{\ell\in[L^\star]}\mathrm{diam}(\cA_\ell^\star)$ and $\Delta_{\max}(H^\star){=}\max_{\ell\in[L^\star]}\mathrm{diam}(\cA_\ell^\star)$.
\end{subassumption}
\end{ass}

\begin{ass}[Effective sample size and plug-in regime]\label{Ass:ESS_plugin}
For any fixed admissible tessellation $H$ and cell
$\cA_{H,\ell}$, define the effective sample size count
\begin{equation*}
N_{H,\ell}=\#\Big\{i\in\cT_1 :
\|X_i-x_{H,\ell}\|\le h, 
|\wsource(X_i)-\wsource(x_{H,\ell})|\le \overline h
\Big\}.
\end{equation*}
\begin{subassumption}\label{subass:ESS}
Fix $\delta\in(0,1)$.
There exist constants
$\frc_1^{\mathrm{ess}},\frc_2^{\mathrm{ess}}>0$
such that, with probability at least $1-\delta$, the following event holds:
\begin{equation}
\frac{1}{\frc_2^{\mathrm{ess}}}
n_{\cT_1} h^d \overline h
\le
\sup_{H\in\cH,\cA_{H,\ell}} N_{H,\ell}
\le
\frc_2^{\mathrm{ess}}
n_{\cT_1} h^d \overline h .
\end{equation}
We define the event
\begin{equation}\label{Eq:event_ESS}
\cE_{\mathrm{ess}}=\Big\{\forall H\in\cH,\forall \ell\in[L_H]:\frac{1}{\frc_2^{\mathrm{ess}}}
n_{\cT_1} h^d \overline h
\le
N_{H,\ell}
\le
\frc_2^{\mathrm{ess}}
n_{\cT_1} h^d \overline h\Big\}.
\end{equation}
In particular, the effective sample size of the local weighted estimator
within each cell is of order $n_{\cT_1} h^d \overline h$, uniformly over
$(H,\ell)$.
\end{subassumption}
\begin{subassumption}\label{subass:plug-in}
We further assume the \textit{plug-in regime}
\begin{equation}\label{eq:plug-in_condition}
n_{\cT_1} h^d \overline h
\ge
\frc_1^{\mathrm{ess}}
\log\Big(\frac{1}{\delta}\Big).
\end{equation}
\end{subassumption}
\end{ass}

\begin{ass}\label{Ass:regularity_transfer}
For all $\ell\in[L^\star]$, the transfer function
$g_\ell^\star$ belongs to the H\"older class
$\mathrm{H\"ol}(\beta_g,\frl_g;\R)$ for some $\beta_g>1$ and
$\frl_g>0$.
\end{ass}

\begin{ass}[Balanced cell allocation]
\label{Ass:lower_mass-balance}
For any $\ell\in[L^\star]$, let $p_\ell^\star := \mathbb{P}(X\in \cA_\ell^\star)$.
There exist constants $0<\mathfrak a\le \mathfrak b<\infty$ such that
\begin{equation}
\frac{\mathfrak a}{L^\star} \le p_\ell^\star \le \frac{\mathfrak b}{L^\star}.
\end{equation}
\end{ass}

\noindent
Under Assumptions~\ref{Ass:design_source}, \ref{Ass:regularity_source},
\ref{Ass:noise}, and~\ref{Ass:kernels}, we recall that  for any $\delta_{\cS}\in(0,1)$, there exist constants
$\frc_{\cS}',\frc_{\cS}>0$ and an event $\cE_{\cS}$ with
$\P(\cE_{\cS})\ge 1-\delta_{\cS}$ on which
\begin{equation}\label{Eq:epsS_rate}
\epsilon_{\cS}
:=
\|\wsource-\source\|_\infty
\le
\frc_{\cS}'
\Bigg(
\frac{\log(\frc_{\cS}/\delta_{\cS})}{n_{\cS}}
\Bigg)^{\frac{\beta_{\cS}}{2\beta_{\cS}+d}}.
\end{equation}

\noindent
\subsubsection*{\textbf{well-specified compositional model}}
\label{subsec:well-specified}

The transfer learning framework is said to be \emph{well-specified in the compositional sense} if there exists a tessellation 
$H^\star=\{\cA_\ell^\star:\ell\in[L^\star]\}$ such that 
Assumption~\ref{Ass:transfer_function} holds on each cell. 
In this regime, the source-target relationship is exactly captured by a cellwise transfer function, and no systematic model error is present. The model is \emph{misspecified} if no admissible tessellation satisfies 
Assumption~\ref{Ass:transfer_function} exactly, resulting in an irreducible approximation bias. 
Our analysis makes this bias explicit through oracle inequalities that balance estimation and approximation errors.

\subsubsection*{\textbf{well-specified local transfer model}} \label{subsec:well-specified_local}
We say that the framework is \emph{well-specified in the local transfer sense} if there exists a tessellation 
$H^\star=\{\cA_\ell^\star:\ell\in[L^\star]\}$ such that  Assumption~\ref{Ass:local_transfer} holds on each cell. 
It is misspecified if no such partition exists.

\section{Useful preliminary results}\label{app:preliminaries}
\noindent
In this section, we fix a cell $\cA_{H,\ell}$ of an admissible tessellation
$H\in\cH$ (see Definition~\ref{Def:admissible-partitions}), with anchor
point $x_{H,\ell}$. We work on the target subsample $\cT_1$ and consider
the oracle-score regression model
\begin{equation}\label{Eq:regression_fs}
Y_i = g_\ell(\source(X_i)) + \varepsilon_i,
\qquad i \in \cT_1^{H,\ell},
\end{equation}
where $\E[\varepsilon_i| X_i]=0$ and the noise is conditionally
sub-exponential with proxy variance $\sigma_{\cT}^2$.
In practice, the unknown score $\source$ is replaced by its estimator
$\wsource$, yielding a plug-in regression procedure whose additional
approximation error will be quantified below.

\noindent
For a fixed cell $\ell$, we write $m_{H,\ell}:=g_\ell\circ\source$ for the
corresponding regression function on $\cA_{H,\ell}$.
We assume that $m_{H,\ell}$ admits a local linear expansion at the representative
point $x_{H,\ell}$:
there exist $\theta^\star_{H,\ell}=(a^\star_{H,\ell},b^\star_{H,\ell})\in\R^2$ and a remainder
$r_{H,\ell}(\,\cdot\,):=r_{H,\ell}(\,\cdot\,;x_{H,\ell})$ such that, for all
$u\in\cA_{H,\ell}$,
\begin{equation}\label{eq:local_lin_decomp}
m_{H,\ell}(u)
=
(\theta^\star_{H,\ell})^\top\phi_{H,\ell}(u)
+
r_{H,\ell}(u),
\end{equation}
where the feature map is defined by
\begin{equation}\label{eq:feature_map_fixed_center}
\phi_{H,\ell}:u\in\cA_{H,\ell}\mapsto
\bigl(1,\;\source(u)-\source(x_{H,\ell})\bigr)^\top\in\R^2.
\end{equation}
\noindent
We define the cell index set and its cardinality by
\begin{equation}\label{eq:T1_cell_def}
\cT_1^{H,\ell}:=\{i\in \cT_1:\ X_i\in \cA_{H,\ell}\}
\quad\text{and}\quad
n_{H,\ell}:=|\cT_1^{H,\ell}|.
\end{equation}

\noindent
We consider the following weighted least-squares estimators on the cell
$\cA_{H,\ell}$:
\begin{multline}\label{Eq:LS_regression_fs}
\overline\theta_{H,\ell}
=
(\overline b_{H,\ell},\overline a_{H,\ell})
\in
\argmin_{b,a\in\R}
\Bigg\{
\frac{1}{n_{H,\ell}}\sum_{i\in\cT_1^{H,\ell}}
\Big[Y_i- a\big(\source(X_i)-\source(x_{H,\ell})\big)-b\Big]^2 \\
\times K_{x,h}(\|X_i-x_{H,\ell}\|)\,
K_{z,\overline h}\big(|\source(X_i)-\source(x_{H,\ell})|\big)
\Bigg\},
\end{multline}
and
\begin{multline}\label{Eq:LS_regression_wfs}
\widehat\theta_{H,\ell}
=
(\widehat b_{H,\ell},\widehat a_{H,\ell})
\in
\argmin_{b,a\in\R}
\Bigg\{
\frac{1}{n_{H,\ell}}\sum_{i\in\cT_1^{H,\ell}}
\Big[Y_i- a\big(\wsource(X_i)-\wsource(x_{H,\ell})\big)-b\Big]^2\\
\times K_{x,h}(\|X_i-x_{H,\ell}\|)\,
K_{z,\overline h}\big(|\wsource(X_i)-\wsource(x_{H,\ell})|\big)
\Bigg\}.
\end{multline}

\noindent
For the oracle estimator, introduce the weights: for all $ i\in\cT_1^{H,\ell}$, $\ell\in[L_H]$,
\begin{equation}\label{eq:oracle_weights}
w_{i,\ell}
:=
K_{x,h}(\|X_i-x_{H,\ell}\|)\,
K_{z,\overline h}\big(|\source(X_i)-\source(x_{H,\ell})|\big)\mathbf 1_{\{X_i\in \cA_{H,\ell}\}},
\end{equation}
and the (unnormalized) weighted design matrices and vectors
\begin{equation*}\label{eq:weighted_objects}
\Psi_{H,\ell}:=
\begin{pmatrix}
\phi_{H,\ell}(X_i)^\top
\end{pmatrix}_{i\in\cT_1^{H,\ell}}
\quad\text{and}\quad
W_{H,\ell}:=\mathrm{diag}(w_{i,\ell})_{i\in\cT_1^{H,\ell}}.
\end{equation*}
\noindent
Define the weighted Gram matrix and score vector
\begin{equation*}\label{eq:weighted_gram_score}
G_{H,\ell}:=\frac{1}{n_{H,\ell}}\Psi_{H,\ell}^\top W_{H,\ell}\Psi_{H,\ell},
\quad \text{and}\quad
s_{H,\ell}:=\frac{1}{n_{H,\ell}}\Psi_{H,\ell}^\top W_{H,\ell}Y,
\end{equation*}
so that, whenever $G_{H,\ell}$ is invertible,
\begin{equation}\label{eq:theta_closed_form}
\overline\theta_{H,\ell}=G_{H,\ell}^{-1}s_{H,\ell}.
\end{equation}

\subsection{A weighted least-squares tail bound}

\noindent
We state a cellwise concentration bound for the oracle weighted estimator
$\overline\theta_{H,\ell}$, uniformly over $\ell\in[L_H]$.

\begin{lem}\label{lem:cellwise_ols_tail_union_ess}
Suppose Assumption~\ref{Ass:noise} holds and fix an admissible tessellation
$H\in\cH$ together with bandwidths $(h,\overline h)$.
Assume that on the event $\cE_{\mathrm{ess}}$ (defined in
\eqref{Eq:event_ESS}) the weighted Gram matrices satisfy, for all
$\ell\in[L_H]$,
\begin{equation}\label{eq:weighted_design_condition_ess_corr}
\lambda_{H,\min} I_2
\preceq
G_{H,\ell}
\preceq
\lambda_{H,\max} I_2.
\end{equation}
Then there exists a universal constant $\frc>0$ such that for all
$\delta\in(0,1)$,
\begin{equation}\label{eq:cellwise_wls_tail_union_ess_corr}
\P\Bigg(
\exists \ell\in[L_H]:\
\|\overline\theta_{H,\ell}-\theta^\star_{H,\ell}\|_2^2
>
\frc\Big(
\frac{\sigma_{\cT}^2}{\lambda_{H,\min}^2}
\frac{\log(2L_H/\delta)}{n_{\mathrm{eff}}^{H,\ell}}
+
\frac{B_{H,\ell}^2}{\lambda_{H,\min}^2}
\Big)
\;\cap\;\cE_{\mathrm{ess}}
\Bigg)
\le \delta,
\end{equation}
where
\begin{equation}\label{eq:bias_term_def_ess_corr}
B_{H,\ell}
:=
\Big\|
\frac{1}{n_{H,\ell}}\Psi_{H,\ell}^\top W_{H,\ell} r_{H,\ell}
\Big\|_2,
\qquad
r_{H,\ell}:=\big(r_{H,\ell}(X_i)\big)_{i\in\cT_1^{H,\ell}},
\end{equation}
and
\begin{equation*}
n_{\mathrm{eff}}^{H,\ell}
:=
\frac{\big(\sum_{i\in\cT_1^{H,\ell}} w_{i,\ell}\big)^2}
{\sum_{i\in\cT_1^{H,\ell}} w_{i,\ell}^2}
\end{equation*}
denotes the kernel-weighted effective sample size in cell $\ell$.

Moreover, if there exist constants $\phi_{\max},r_{\max}<\infty$
such that for all $H\in\cH$, all $\ell\in[L_H]$ and all
$x\in\cA_{H,\ell}$,
\begin{equation*}
\|\phi_{H,\ell}(x)\|_2\le \phi_{\max},
\qquad
|r_{H,\ell}(x)|\le r_{\max},
\end{equation*}
then for all $\ell\in[L_H]$,
\begin{equation}\label{eq:bias_term_simple_ess_corr}
B_{H,\ell}\le \phi_{\max}\, r_{\max}.
\end{equation}
\end{lem}

\begin{proof}
Fix $\ell\in[L_H]$ and condition on the design
$\{X_i\}_{i\in\cT_1^{H,\ell}}$.
Using the local linear decomposition \eqref{eq:local_lin_decomp}, for all
$i\in\cT_1^{H,\ell}$ we may write
\begin{equation*}
Y_i
=
(\theta^\star_{H,\ell})^\top\phi_{H,\ell}(X_i)
+
r_{H,\ell}(X_i)
+
\varepsilon_i.
\end{equation*}
Recalling the definitions of $G_{H,\ell}$ and $s_{H,\ell}$, this yields
\begin{equation*}
s_{H,\ell}-G_{H,\ell}\theta^\star_{H,\ell}
=
\frac{1}{n_{H,\ell}}\Psi_{H,\ell}^\top W_{H,\ell}\varepsilon
+
\frac{1}{n_{H,\ell}}\Psi_{H,\ell}^\top W_{H,\ell} r_{H,\ell}.
\end{equation*}
On the event that $G_{H,\ell}$ is invertible,
\begin{equation*}
\overline\theta_{H,\ell}-\theta^\star_{H,\ell}
=
G_{H,\ell}^{-1}
\Bigg(
\frac{1}{n_{H,\ell}}\Psi_{H,\ell}^\top W_{H,\ell}\varepsilon
+
\frac{1}{n_{H,\ell}}\Psi_{H,\ell}^\top W_{H,\ell} r_{H,\ell}
\Bigg).
\end{equation*}
Using \eqref{eq:weighted_design_condition_ess_corr},
\begin{equation}\label{eq:wls_decomp_rewrite_ess_corr}
\|\overline\theta_{H,\ell}-\theta^\star_{H,\ell}\|_2
\le
\frac{1}{\lambda_{H,\min}}
\Bigg(
\Big\|
\frac{1}{n_{H,\ell}}\Psi_{H,\ell}^\top W_{H,\ell}\varepsilon
\Big\|_2
+
B_{H,\ell}
\Bigg).
\end{equation}

\medskip
\noindent
Let $v\in\mathbb S^1$ be fixed. Then
\begin{equation*}
v^\top\Psi_{H,\ell}^\top W_{H,\ell}\varepsilon
=
\sum_{i\in\cT_1^{H,\ell}}
w_{i,\ell}\,
(v^\top\phi_{H,\ell}(X_i))\,\varepsilon_i.
\end{equation*}
By Assumption~\ref{Ass:noise}, the variables
$(\varepsilon_i)_{i\in\cT_1^{H,\ell}}$
are independent, centered, and sub-exponential with
$\|\varepsilon_i\|_{\psi_1}\le \sigma_{\cT}$.
Moreover,
\begin{equation*}
|v^\top\phi_{H,\ell}(X_i)|
\le \|\phi_{H,\ell}(X_i)\|_2
\le \phi_{\max}.
\end{equation*}
Hence the summands
$w_{i,\ell}(v^\top\phi_{H,\ell}(X_i))\varepsilon_i$
are independent, centered, sub-exponential random variables with
$\psi_1$-norm bounded by $\phi_{\max}|w_{i,\ell}|\sigma_{\cT}$.
\\
Applying Bernstein’s inequality (see Appendix \ref{App:concentration}) for weighted sums of sub-exponential
random variables yields that there exists a universal constant $\frc_1>0$
such that for all $t>0$,
\begin{equation*}
\P\Bigg(
\big|
v^\top\Psi_{H,\ell}^\top W_{H,\ell}\varepsilon
\big|
\ge
\frc_1\sigma_{\cT}\phi_{\max}\Bigg(
\sqrt{t\sum_{i} w_{i,\ell}^2 }
+
t\max_{i}|w_{i,\ell}|
\Bigg)
\Bigg)
\le 2e^{-t}.
\end{equation*}

On $\cE_{\mathrm{ess}}$, we have
\begin{equation*}
\sum_{i} w_{i,\ell}^2
=
\frac{(\sum_i w_{i,\ell})^2}{n_{\mathrm{eff}}^{H,\ell}}.
\end{equation*}
Using this relation and a standard $1/2$-net argument on $\mathbb S^1$
(see \cite[Section~5.2.2]{wainwright2019high}),
we deduce that there exists a universal constant $\frc_2>0$ such that
for all $t>0$,
\begin{equation*}
\P\Bigg(
\Big\|
\frac{1}{n_{H,\ell}}\Psi_{H,\ell}^\top W_{H,\ell}\varepsilon
\Big\|_2^2
\ge
\frc_2\,\frac{\sigma_{\cT}^2 t}{n_{\mathrm{eff}}^{H,\ell}}
\Bigg)
\le 2e^{-t}.
\end{equation*}
\noindent
Combining this bound with
\eqref{eq:wls_decomp_rewrite_ess_corr} gives that with probability at least
$1-2e^{-t}$,
\begin{equation*}
\|\overline\theta_{H,\ell}-\theta^\star_{H,\ell}\|_2^2
\le
\frc\Bigg(
\frac{\sigma_{\cT}^2}{\lambda_{H,\min}^2}
\frac{t}{n_{\mathrm{eff}}^{H,\ell}}
+
\frac{B_{H,\ell}^2}{\lambda_{H,\min}^2}
\Bigg).
\end{equation*}
Taking $t=\log(2L_H/\delta)$ and applying a union bound over
$\ell\in[L_H]$ yields \eqref{eq:cellwise_wls_tail_union_ess_corr}.
\\
Finally, if $|r_{H,\ell}(u)|\le r_{\max}$ and
$\|\phi_{H,\ell}(u)\|_2\le \phi_{\max}$ for all
$u\in\cA_{H,\ell}$, then
\begin{equation*}
B_{H,\ell}
=
\Big\|
\frac{1}{n_{H,\ell}}
\sum_{i\in\cT_1^{H,\ell}}
w_{i,\ell}\phi_{H,\ell}(X_i)r_{H,\ell}(X_i)
\Big\|_2
\le
\phi_{\max} r_{\max}.
\end{equation*}
This completes the proof.
\end{proof}

\subsection{A perturbation bound: oracle vs. plug-in}

The second objective of this section is to compare the estimators
$\overline\theta_{H,\ell}=(\overline b_{H,\ell},\overline a_{H,\ell})$ and
$\widehat\theta_{H,\ell}=(\widehat b_{H,\ell},\widehat a_{H,\ell})$ obtained by
replacing the oracle score
\begin{equation*}
Z_{i,\ell}:=\source(X_i)-\source(x_{H,\ell})
\end{equation*}
by the plug-in score
\begin{equation*}
\widehat Z_{i,\ell}:=\wsource(X_i)-\wsource(x_{H,\ell}).
\end{equation*}

\begin{lem}[Oracle vs.\ plug-in local estimator]\label{lem:oracle-plugin-gap}
Suppose Assumptions~\ref{Ass:kernels},~\ref{Ass:H_local-design}, and
\ref{Ass:ESS_plugin} hold.
Fix $H\in\cH$ and a cell index $\ell\in[L_H]$.
Assume moreover that
\begin{equation*}
\epsilon_{\cS}:=\|\wsource-\source\|_\infty
\end{equation*}
satisfies
\begin{equation}\label{eq:eps_over_hbar_small}
\frac{\epsilon_{\cS}}{\overline h}\le \frc_0,
\end{equation}
for a sufficiently small constant $\frc_0>0$ depending only on kernel envelopes
and Lipschitz constants.
Then there exists a constant $\frc>0$ such that for all $\delta\in(0,1)$,
on the event $\cE_{\mathrm{ess}}$ from Assumption~\ref{Ass:ESS_plugin},
with conditional probability at least $1-\delta$ (given the source sample),
\begin{equation}\label{eq:oracle_plugin_gap}
\|\widehat\theta_{H,\ell}-\overline\theta_{H,\ell}\|_2
\le
\frc\Bigg[
\frac{\epsilon_{\cS}}{\overline h}
+
\sqrt{\frac{\log(1/\delta)}{n_{\cT_1} h^d \overline h}}
+
\frac{\log(1/\delta)}{n_{\cT_1} h^d \overline h}
\Bigg].
\end{equation}
In particular, unconditionally, the above holds with probability at least
$1-2\delta$.
The constant $\frc>0$ depends only on kernel envelopes and Lipschitz constants,
$\lambda_{H,\min},\lambda_{H,\max}$, and admissible-partition constants.
\end{lem}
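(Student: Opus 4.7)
I would use the first-order perturbation identity for weighted least-squares solutions:
\[
\widehat\theta_{H,\ell}-\overline\theta_{H,\ell}
=
\widehat G_{H,\ell}^{-1}\Big[(\widehat s_{H,\ell}-s_{H,\ell})-(\widehat G_{H,\ell}-G_{H,\ell})\,\overline\theta_{H,\ell}\Big],
\]
where $(\widehat G_{H,\ell},\widehat s_{H,\ell})$ denote the Gram matrix and score vector built with the plug-in weights $\widehat w_i=K_{x,h}(\|X_i-x_{H,\ell}\|)K_{z,\overline h}(|\widehat Z_{i,\ell}|)$ and features $\widehat\phi_i=(1,\widehat Z_{i,\ell})^\top$, while $(G_{H,\ell},s_{H,\ell})$ use the oracle counterparts. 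Thus it suffices to control (i) $\|\widehat G_{H,\ell}-G_{H,\ell}\|_{\op}$ and $\|\widehat s_{H,\ell}-s_{H,\ell}\|_2$, and (ii) $\lambda_{\min}(\widehat G_{H,\ell})$ from below.

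For (i), I would split each perturbation into a weight part and a feature part: $\widehat w_i\widehat\phi_i\widehat\phi_i^\top - w_i\phi_i\phi_i^\top = (\widehat w_i-w_i)\widehat\phi_i\widehat\phi_i^\top + w_i(\widehat\phi_i\widehat\phi_i^\top-\phi_i\phi_i^\top)$. The Lipschitz property of $K_z$ (Assumption~\ref{Ass:kernels}) together with the rescaling $K_{z,\overline h}(u)=\overline h^{-1}K_z(u/\overline h)$ gives $|\widehat w_i-w_i|\lesssim \epsilon_{\cS}/(h^d\overline h^2)$, and $\|\widehat\phi_i-\phi_i\|_2=|\widehat Z_{i,\ell}-Z_{i,\ell}|\le 2\epsilon_{\cS}$. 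Summing over the $N_{H,\ell}\asymp n_{\cT_1}h^d\overline h$ points supporting the joint $(x,\wsource)$-window on $\cE_{\mathrm{ess}}$ (Assumption~\ref{Ass:ESS_plugin}), the deterministic contribution to $\|\widehat G_{H,\ell}-G_{H,\ell}\|_{\op}$ is of order $\epsilon_{\cS}/\overline h$; the analogous computation for $\widehat s_{H,\ell}-s_{H,\ell}$ goes through after truncating $Y_i$ on a high-probability event (using sub-exponential tails from Assumption~\ref{Ass:noise}).

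To upgrade this to a genuinely stochastic bound and to control $\lambda_{\min}(\widehat G_{H,\ell})$, I would combine the above with a matrix Bernstein inequality for the oracle Gram matrix. Since $\|w_i\phi_i\phi_i^\top\|_{\op}\lesssim 1/(h^d\overline h)$, matrix Bernstein gives
\[
\|G_{H,\ell}-\E[G_{H,\ell}]\|_{\op}
\lesssim
\sqrt{\frac{\log(1/\delta)}{n_{\cT_1}h^d\overline h}}
+\frac{\log(1/\delta)}{n_{\cT_1}h^d\overline h},
\]
with an analogous vector Bernstein bound for $s_{H,\ell}$. Combined with Assumption~\ref{Ass:H_local-design} and Weyl's inequality, smallness of $\epsilon_{\cS}/\overline h$ (hypothesis \eqref{eq:eps_over_hbar_small}) yields $\lambda_{\min}(\widehat G_{H,\ell})\ge \lambda_{H,\min}/2$ with probability at least $1-\delta/2$. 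Using the boundedness of $\|\overline\theta_{H,\ell}\|_2$ with high probability (Lemma~\ref{lem:cellwise_ols_tail_union} plus boundedness of $\theta^\star_{H,\ell}$) and inserting all bounds into the perturbation identity produces exactly the three terms $\epsilon_{\cS}/\overline h$, $\sqrt{\log(1/\delta)/(n_{\cT_1}h^d\overline h)}$, and $\log(1/\delta)/(n_{\cT_1}h^d\overline h)$.

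\textbf{Main obstacle.} The trickiest step is handling the boundary layer of the compactly supported kernel $K_z$: a sample point with $|Z_{i,\ell}|$ close to $\overline h$ may lie in exactly one of the two windows $\{|Z_{i,\ell}|\le\overline h\}$ and $\{|\widehat Z_{i,\ell}|\le\overline h\}$, so the pointwise Lipschitz estimate must cover symmetric-difference contributions. The assumption $\epsilon_{\cS}/\overline h\le \frc_0$ is precisely what controls the mass of this boundary layer, because $K_z$ vanishes at the edge of its support, so Lipschitz continuity translates the event ``point switched side'' into a genuinely small kernel value. A secondary, more bookkeeping-level difficulty is ensuring that the effective-sample-size event $\cE_{\mathrm{ess}}$ applies simultaneously to both the oracle and plug-in windows, which follows again from $\epsilon_{\cS}/\overline h$ being sufficiently small so that the two joint windows are nested up to a comparable multiplicative constant.
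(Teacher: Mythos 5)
Your proposal is correct and follows essentially the same route as the paper's proof: a resolvent/perturbation identity comparing the plug-in and oracle weighted least-squares systems, with the weight and feature perturbations controlled via the Lipschitz property of $K_z$ and $\epsilon_{\cS}/\overline h$, the effective-sample-size event normalizing the window counts, Bernstein-type concentration for the $Y$-dependent terms, and Assumption~\ref{Ass:H_local-design} plus Weyl for invertibility of the plug-in Gram matrix. The only differences are bookkeeping choices (working with the $1/n_{\cT_1}$-normalized Gram and score rather than the self-normalized versions, adding an explicit matrix Bernstein step, and bounding $\|\overline\theta_{H,\ell}\|_2$ instead of the score vector), which lead to the same three error terms.
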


\begin{proof}
Fix a cell $\ell$ and write $x_\ell:=x_{H,\ell}$ for brevity.
For all $i\in \cT_1^{H,\ell}$, define
\begin{equation*}
Z_{i,\ell}:=\source(X_i)-\source(x_\ell),
\qquad
\widehat Z_{i,\ell}:=\wsource(X_i)-\wsource(x_\ell),
\end{equation*}
the feature vectors
\begin{equation*}
\phi_i = (1,Z_{i,\ell}/\overline h)^\top,
\qquad
\widehat\phi_i = (1,\widehat Z_{i,\ell}/\overline h)^\top,
\end{equation*}
and the weights
\begin{equation*}
w_i
=
K_{x,h}(\|X_i-x_\ell\|)\,
K_{z,\overline h}(|Z_{i,\ell}|),
\qquad
\widehat w_i
=
K_{x,h}(\|X_i-x_\ell\|)\,
K_{z,\overline h}(|\widehat Z_{i,\ell}|).
\end{equation*}
Let
\begin{equation*}
S=\sum_{i\in\cT_1^{H,\ell}} w_i,
\qquad
\widehat S=\sum_{i\in\cT_1^{H,\ell}} \widehat w_i,
\end{equation*}
and define
\begin{equation*}
M = S^{-1}\sum_{i\in \cT_1^{H,\ell}} w_i\phi_i\phi_i^\top,
\qquad
\widehat M = \widehat S^{-1}\sum_{i\in \cT_1^{H,\ell}} \widehat w_i\widehat\phi_i\widehat\phi_i^\top,
\end{equation*}
as well as
\begin{equation*}
m = S^{-1}\sum_{i\in \cT_1^{H,\ell}} w_i\phi_i Y_i,
\qquad
\widehat m = \widehat S^{-1}\sum_{i\in \cT_1^{H,\ell}} \widehat w_i\widehat\phi_i Y_i.
\end{equation*}
Then $\overline\theta_{H,\ell}=M^{-1}m$ and
$\widehat\theta_{H,\ell}=\widehat M^{-1}\widehat m$.

\smallskip
\noindent
\underline{\textit{Step 1: Effective sample size and normalization.}}
On the event $\cE_{\mathrm{ess}}$,
\begin{equation*}
n_{\mathrm{eff}}^{H,\ell}
=
\frac{S^2}{\sum_{i} w_i^2}
\asymp
n_{\cT_1}h^d\overline h.
\end{equation*}
Since $K_z$ is Lipschitz and
\begin{equation*}
|Z_{i,\ell}-\widehat Z_{i,\ell}|
\le 2\epsilon_{\cS},
\end{equation*}
we have
\begin{equation*}
|w_i-\widehat w_i|
\lesssim
\frac{\epsilon_{\cS}}{\overline h}.
\end{equation*}
Summing gives
\begin{equation*}
|S-\widehat S|
\lesssim
\frac{\epsilon_{\cS}}{\overline h}\, n_{H,\ell}.
\end{equation*}
Since $S\asymp n_{\cT_1}h^d\overline h$, for $\frc_0$ small enough,
$\widehat S/S$ is bounded away from $0$ and $\infty$.

\smallskip
\noindent
\underline{\textit{Step 2: Control of $\|M-\widehat M\|_{\op}$.}}
Let
\begin{equation*}
A=\sum w_i\phi_i\phi_i^\top,
\qquad
\widehat A=\sum \widehat w_i\widehat\phi_i\widehat\phi_i^\top.
\end{equation*}
Then
\begin{equation*}
M-\widehat M
=
\frac{A-\widehat A}{S}
+
\widehat A\Big(\frac{1}{S}-\frac{1}{\widehat S}\Big).
\end{equation*}
On the kernel supports,
$\|\phi_i\|_2,\|\widehat\phi_i\|_2\lesssim 1$, and
\begin{equation*}
\|\phi_i\phi_i^\top-\widehat\phi_i\widehat\phi_i^\top\|_{\op}
\lesssim
\frac{\epsilon_{\cS}}{\overline h}.
\end{equation*}
Hence
\begin{equation*}
\|A-\widehat A\|_{\op}
\lesssim
\frac{\epsilon_{\cS}}{\overline h}\, S.
\end{equation*}
Using stability of $S$ and $\widehat S$,
\begin{equation*}
\|M-\widehat M\|_{\op}
\lesssim
\frac{\epsilon_{\cS}}{\overline h}.
\end{equation*}

\smallskip
\noindent
\underline{\textit{Step 3: Concentration terms.}}
Conditionally on $(X_i)$, by Bernstein's inequality for weighted
sub-exponential sums,
with probability at least $1-\delta/4$,
\begin{equation*}
\|m-\E[m|(X_i)]\|_2
+
\|\widehat m-\E[\widehat m|(X_i)]\|_2
\lesssim
\sqrt{\frac{\log(1/\delta)}{n_{\mathrm{eff}}^{H,\ell}}}
+
\frac{\log(1/\delta)}{n_{\mathrm{eff}}^{H,\ell}}.
\end{equation*}
On $\cE_{\mathrm{ess}}$, this equals
\begin{equation*}
\sqrt{\frac{\log(1/\delta)}{n_{\cT_1}h^d\overline h}}
+
\frac{\log(1/\delta)}{n_{\cT_1}h^d\overline h}.
\end{equation*}
Moreover,
\begin{equation*}
\|\E[m|(X_i)]-\E[\widehat m|(X_i)]\|_2
\lesssim
\frac{\epsilon_{\cS}}{\overline h}.
\end{equation*}

\smallskip
\noindent
\underline{\textit{Final step.}}
We write
\begin{equation*}
\widehat\theta_{H,\ell}-\overline\theta_{H,\ell}
=
(\widehat M^{-1}-M^{-1})m
+
\widehat M^{-1}(\widehat m-m).
\end{equation*}
By Assumption~\ref{Ass:H_local-design} and Weyl's inequality,
\begin{equation*}
\|M^{-1}\|_{\op},\|\widehat M^{-1}\|_{\op}
\lesssim
\frac{1}{\lambda_{H,\min}}.
\end{equation*}
Moreover,
\begin{equation*}
\|\widehat M^{-1}-M^{-1}\|_{\op}
\le
\|M^{-1}\|_{\op}\,\|M-\widehat M\|_{\op}\,\|\widehat M^{-1}\|_{\op}
\lesssim
\frac{\epsilon_{\cS}}{\overline h}.
\end{equation*}
Combining the previous bounds and applying a union bound yields
\eqref{eq:oracle_plugin_gap}.
\end{proof}

\color{black}

\subsection{A high-probability control of the oracle slopes}

\noindent
The following lemma provides a high-probability control of the oracle slopes.

\begin{lem}\label{lem:hp_oahl}
Assume Assumptions~\ref{Ass:design_target}, \ref{Ass:noise},
\ref{Ass:star_local-design}, and~\ref{Ass:ESS_plugin} hold.
Then there exists a constant $\frc_a>0$ such that for all $\delta\in(0,1)$, the event
\begin{equation}\label{eq:def_Adelta}
\cE_{a,H,\delta}
:=\left\{
\max_{\ell\in[L_H]}|\overline a_{H,\ell}|^2
\le \frc_a \log\Big(\frac{L_H}{\delta}\Big)
\right\}
\end{equation}
satisfies $\P(\cE_{a,H,\delta})\ge 1-\delta$.
\end{lem}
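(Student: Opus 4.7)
The plan is to work from the closed-form representation
$\overline\theta_{H,\ell}=G_{H,\ell}^{-1}s_{H,\ell}$
provided by \eqref{eq:theta_closed_form}, and to bound
$|\overline a_{H,\ell}|$ by $\|G_{H,\ell}^{-1}\|_{\op}\,\|s_{H,\ell}\|_2$,
splitting $s_{H,\ell}$ into a uniformly bounded deterministic part and a
stochastic part that concentrates at the desired logarithmic rate after
a union bound over $\ell\in[L_H]$.

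First, I would invoke the weighted-Gram lower bound inherited from
Assumption~\ref{Ass:star_local-design} and from the conditioning
\eqref{eq:weighted_design_condition} to get the deterministic
inequality
$|\overline a_{H,\ell}|\le (\lambda_{\min}^\star)^{-1}\|s_{H,\ell}\|_2$.
Writing $Y_i=f_{\cT}(X_i)+\varepsilon_i$, I would decompose
$s_{H,\ell}=s_{H,\ell}^{\mathrm{det}}+s_{H,\ell}^{\varepsilon}$ with
$s_{H,\ell}^{\mathrm{det}}=\frac{1}{n_{\cT_1}}\Psi_{H,\ell}^\top W_{H,\ell}(f_{\cT}(X_i))_{i\in\cT_1}$.
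Since $f_{\cT}$ is Hölder continuous on the compact domain $[0,1]^d$
(Assumption~\ref{Ass:regularity_target}) it is bounded by some $C_f>0$,
and Cauchy--Schwarz combined with the upper Gram bound from
\eqref{eq:weighted_design_condition} yields a deterministic bound
$\|s_{H,\ell}^{\mathrm{det}}\|_2 \le C_1$ uniformly over $\ell$, with
$C_1$ depending only on $C_f$, the kernel envelopes, and the design
constants. For the stochastic part, the weighted sub-exponential Bernstein
inequality already used in the proof of
Lemma~\ref{lem:cellwise_ols_tail_union} (see
\eqref{eq:wls_noise_bound_rewrite}), applied coordinate-wise and
combined with a union bound over the $L_H$ cells and the two
coordinates, gives
$\max_{\ell\in[L_H]}\|s_{H,\ell}^{\varepsilon}\|_2^2 \le
C_2\log(L_H/\delta)/n_{\cT_1}$ with probability at least $1-\delta$,
for a constant $C_2$ depending only on $\sigma_{\cT}$ and the weighted
Gram constants. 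Squaring the triangle-inequality bound
$|\overline a_{H,\ell}|\le(\lambda_{\min}^\star)^{-1}(\|s_{H,\ell}^{\mathrm{det}}\|_2+\|s_{H,\ell}^{\varepsilon}\|_2)$
and absorbing the bounded $O(1)$ term into the logarithm (using
$\log(L_H/\delta)\ge \log 2$ for $L_H\ge 2$ and $\delta\in(0,1)$)
then yields $\max_{\ell\in[L_H]}|\overline a_{H,\ell}|^2\le
\frc_a\log(L_H/\delta)$ for a suitable $\frc_a>0$.

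The main obstacle is the deterministic control
$\|s_{H,\ell}^{\mathrm{det}}\|_2 \le C_1$ uniformly over cells: the
weights $w_{i,\ell}$ can individually be of order $(h^d\overline h)^{-1}$,
so a naive envelope bound would blow up with the bandwidths. The point
is to exploit the normalization by $n_{\cT_1}$ in $s_{H,\ell}^{\mathrm{det}}$
together with the cellwise lower-mass and effective sample size
guarantees of Assumptions~\ref{Ass:mass_cell} and~\ref{Ass:ESS_plugin},
so that the weighted Gram normalization of
Lemma~\ref{lem:cellwise_ols_tail_union} delivers a bandwidth-independent
bound. Once this is in place, the Bernstein concentration and
union-bound steps are routine.
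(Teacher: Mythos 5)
Your proof is correct and follows essentially the same route as the paper's: closed-form expression for $\overline\theta_{H,\ell}$, a weighted-Gram lower bound (justified, as in the paper, via Assumptions~\ref{Ass:star_local-design} and~\ref{Ass:ESS_plugin}), a bandwidth-independent control of the weighted score using the effective-sample-size bound on the number of active weights, the weighted sub-exponential Bernstein inequality, and a union bound over the $L_H$ cells. The only (harmless) difference is that you split $Y_i=\target(X_i)+\varepsilon_i$ into a bounded signal part and a concentrating noise part, whereas the paper applies the weighted Bernstein bound directly to $\sum_i\alpha_{i,\ell}|Y_i|$ with weight-normalized quantities; your variant in fact gives a marginally sharper bound before absorbing the constant into $\log(L_H/\delta)$, subject to the same mild caveat (shared by the paper) that $\log(L_H/\delta)$ must be bounded away from zero.
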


\begin{proof}
Fix $\delta\in(0,1)$.
Recall that for a fixed tessellation $H\in\cH$ and a cell $\cA_{H,\ell}$, the
cellwise transfer coefficients
$\overline\theta_{H,\ell}=(\overline b_{H,\ell},\overline a_{H,\ell})^\top$
are defined as the solution of a weighted least-squares problem based on the target
subsample $\cT_1$.
They admit the closed-form representation
\begin{equation*}
\overline\theta_{H,\ell}
=
M_{H,\ell}^{-1} m_{H,\ell},
\end{equation*}
where the Gram matrix $M_{H,\ell}\in\mathbb R^{2\times 2}$ and the vector
$m_{H,\ell}\in\mathbb R^{2}$ are given by
\begin{equation*}
M_{H,\ell}
:=
\sum_{i\in\cT_{1}^{H,\ell}}
\alpha_{i,\ell}\,
\phi_{H,\ell}(X_i)\phi_{H,\ell}(X_i)^\top
\quad\text{and}\quad
m_{H,\ell}
:=
\sum_{i\in\cT_{1}^{H,\ell}}
\alpha_{i,\ell}\,
\phi_{H,\ell}(X_i)\,Y_i,
\end{equation*}
with $\phi_{H,\ell}(x)=(1,z_{H,\ell}(x))^\top$,
$z_{H,\ell}(x):=\source(x)-\source(x_{H,\ell})$, and normalized weights
\begin{equation*}
\alpha_{i,\ell}
:=
\frac{w_{i,\ell}}{\sum_{j\in\cT_{1}^{H,\ell}} w_{j,\ell}},
\qquad
w_{i,\ell}
=
K_{x,h}\big(\|X_i-x_{H,\ell}\|\big)
K_{z,\overline h}\big(|\source(X_i)-\source(x_{H,\ell})|\big).
\end{equation*}
By construction, $\alpha_{i,\ell}\ge 0$ and $\sum_{i\in\cT_{1}^{H,\ell}}\alpha_{i,\ell}=1$.
\\
Define the conditioning event
\begin{equation*}
\cE_{\lambda,\ell}:=\{\lambda_{\min}(M_{H,\ell})\ge \lambda_0\}.
\end{equation*}
By Assumptions~\ref{Ass:star_local-design} and \ref{Ass:ESS_plugin}, we may choose
$\lambda_0$ (and the implicit constants in the effective sample size regime) so that
\begin{equation}\label{eq:gram_good_prob}
\P\Big(\bigcap_{\ell\in[L_H]}\cE_{\lambda,\ell}\Big)\ge 1-\frac{\delta}{2}.
\end{equation}
On $\cE_{\lambda,\ell}$, we have $\|M_{H,\ell}^{-1}\|_{\op}\le \lambda_0^{-1}$ and hence
\begin{equation}\label{eq:oahl_basic_bound}
|\overline a_{H,\ell}|
\le \|\overline\theta_{H,\ell}\|_2
\le \|M_{H,\ell}^{-1}\|_{\op}\,\|m_{H,\ell}\|_2
\le \lambda_0^{-1}\|m_{H,\ell}\|_2.
\end{equation}
Moreover, using $\|\phi_{H,\ell}(X_i)\|_2\le \frc_\phi$ on each cell, we obtain
\begin{equation}\label{eq:m_bound}
\|m_{H,\ell}\|_2
\le \sum_{i\in \cT_{1}^{H,\ell}} \alpha_{i,\ell}\|\phi_{H,\ell}(X_i)\|_2|Y_i|
\le \frc_\phi\sum_{i\in \cT_{1}^{H,\ell}} \alpha_{i,\ell}|Y_i|.
\end{equation}
\noindent
Consider the event
\begin{equation*}
\cA^{\mathrm{ess}}
:=
\Bigg\{
\max_{\ell\in[L_H]}
\sum_{i\in\cT_{1}^{H,\ell}}\alpha_{i,\ell}^2
\le
\frac{\frc_{\mathrm{ess}}^2}{n_{\cT_1}h^d\overline h}
\Bigg\}.
\end{equation*}
By Assumption~\ref{Ass:ESS_plugin} and boundedness of the kernels, this event
satisfies $\P(\cA^{\mathrm{ess}})\ge 1-\delta/2$.
Conditionally on the weights $(\alpha_{i,\ell})_i$ and the design $(X_i)_i$,
the variables $(Y_i)_{i}$ are independent and, by
Assumption~\ref{Ass:noise}, sub-exponential with uniformly bounded $\psi_1$-norm.
Bernstein's inequality (see Lemma \ref{lem:Bernstein_subexp}) for weighted sums of sub-exponential variables yields that
there exists a constant $\frc>0$ such that, for every $\ell\in[L_H]$,
\begin{equation}\label{eq:bernstein_weighted}
\P\left(
\sum_{i\in\cT_1^{H,\ell}} \alpha_{i,\ell}|Y_i|
\ge \frc\Big(1+\sqrt{\log(L_H/\delta)}\Big)
\, \big|\, (\alpha_{i,\ell})_i
\right)
\le \frac{\delta}{2L_H},
\end{equation}
on the event $\cA^{\mathrm{ess}}$.
Taking a union bound over $\ell\in[L_H]$ yields that, on $\cA^{\mathrm{ess}}$, with
probability at least $1-\delta/2$,
\begin{equation}\label{eq:sum_uniform}
\max_{\ell\in[L_H]}\sum_{i\in\cT_1^{H,\ell}} \alpha_{i,\ell}|Y_i|
\le \frc\Big(1+\sqrt{\log(L_H/\delta)}\Big).
\end{equation}
Intersecting \eqref{eq:sum_uniform} with \eqref{eq:gram_good_prob}, and using
\eqref{eq:oahl_basic_bound}-\eqref{eq:m_bound}, yields \eqref{eq:def_Adelta}.
\end{proof}

\section[Proofs of Section 3.2.: Upper bounds]{Proofs of Section 3.2.: Upper bounds}\label{app:proof_upper_bound}%\ref{Thm:fixed_tessellation_rate}
\noindent
Appendix~B.1 is devoted to Theorem~1, which establishes the oracle rate under the well-specified compositional model. The result is stated in expectation and provides a sharp risk bound for the transfer estimator when the oracle tessellation is known and the compositional structure (Assumption 1) holds.
Appendix~B.2 proves Theorem~2 in the local linear transfer model (Assumption 2). This theorem provides a high-probability bound on the excess risk for a data-dependent nonparametric estimator constructed on a fixed tessellation $H\in\cH$. It also quantifies the additional error arising from the data-driven selection of the tessellation. As a consequence, it yields Corollary~1 (plug-in version), Corollary~2 (expectation bound), and Corollary~3 (oracle rate in the well-specified case).
Finally, Appendix~B.3 analyzes the empirical risk minimization step and establishes the corresponding oracle guarantees, leading to Theorem~3.

\subsection{Proof of Theorem 1 (well-specified compositional model)}
We work on the oracle tessellation $H^\star=\{\cA_\ell^\star:\ell\in[L^\star]\}$ where Assumption \ref{Ass:transfer_function} holds and recall that,
for the squared-loss risk $\cR(f)=\E[(Y-f(X))^2]$ under the model $Y=\target(X)+\varepsilon$
with $\E[\varepsilon| X]=0$,
\begin{equation*}
\cR(f)-\cR(\target)=\|f-\target\|_{\L^2(\mu_X)}^2.
\end{equation*}
Hence it suffices to control $\|\wTL-\target\|_{\L^2(\mu_X)}^2$, where $\wTL=\wh f_{\cT}^{H^\star}$
denotes the transfer estimator constructed on $H^\star$.

\medskip
\noindent
\textit{Auxiliary functions and deterministic decomposition.}
Fix $\ell\in[L^\star]$ and denote by $x_{\ell}$ the representative point of $\cA_\ell^\star$.
Let $\ell^\star(x)$ be the unique index such that $x\in\cA_{\ell^\star(x)}^\star$.
Define the centered oracle score on cell $\ell$ for any $x\in\cA_\ell^\star$ by
\begin{equation*}
z_\ell(x):=\source(x)-\source(x_{\ell}),
\end{equation*}
with $x_{\ell^\star(x)}=x_\ell$ as $x\in\cA_\ell^\star$.
%and the global piecewise score map $z(x):=z_{\ell^\star(x)}(x)$.

\smallskip
\noindent
\textit{Transfer linearization $\gslin$.}
For each cell $\ell\in[L^\star]$, let $(a_\ell^\star,b_\ell^\star)\in\R^2$ denote the population
cellwise least-squares coefficients,
\begin{equation*}
(a_\ell^\star,b_\ell^\star)
\in
\argmin_{a,b\in\R}
\E\Big[\big(\target(X)-a z_\ell(X)-b\big)^2 \,\Big|\, X\in\cA_\ell^\star\Big],
\end{equation*}
and define the piecewise-linear approximation
\begin{equation*}
\gslin(x)
=
a_{\ell^\star(x)}^\star z_{\ell^\star(x)}(x)+b_{\ell}^\star.
\end{equation*}

\smallskip
\noindent
\textit{Source oracle $\gabh$.}
Let $\overline\theta_\ell=(\overline b_\ell,\overline a_\ell)$ be the cellwise oracle estimator
computed from $\cT_1$ (using the true score $\source(X_i)-\source(x_{\ell})$) with the weights
$K_{x,h}(\|X_i-x_{\ell}\|)K_{z,\overline h}(|\source(X_i)-\source(x_{\ell})|)$,
and define the associated oracle transfer predictor
\begin{equation*}
\gabh(x)
=
\overline a_{\ell^\star(x)}z_{\ell^\star(x)}(x)+\overline b_{\ell^\star(x)}.
\end{equation*}

\smallskip
\noindent
\textit{Plug-in transfer estimator $\wTL$.}
Let $\widehat\theta_\ell=(\widehat b_\ell,\widehat a_\ell)$ be the corresponding plug-in estimator
obtained by replacing $\source$ with $\wsource$ in the score and in the $K_z$-weights, and define
\begin{equation*}
\wTL(x)
=
\widehat a_{\ell^\star(x)}\big(\wsource(x)-\wsource(x_{\ell^\star(x)})\big)
+
\widehat b_{\ell^\star(x)}.
\end{equation*}

\smallskip
\noindent
Using $(u+v+w)^2\le 3(u^2+v^2+w^2)$ and the orthogonality property of least-squares projections
(cellwise, hence globally), we have 
\begin{equation*}
\|\wTL-\target\|_{\L^2(\mu_X)}^2
\le
3\,\mathrm{Approx}(H^\star)
+
3\,\mathrm{Fit}_{\cT_1}(H^\star)
+
3\,\mathrm{Plug}_{\cS}(H^\star),
\end{equation*}
where
\begin{equation}\label{eq:approx_fit_star_def}
\mathrm{Approx}(H^\star):=\|\target-\gslin\|_{\L^2(\mu_X)}^2,
\qquad
\mathrm{Fit}_{\cT_1}(H^\star):=\|\gslin-\gabh\|_{\L^2(\mu_X)}^2,
\end{equation}
and
\begin{equation}\label{eq:plug_star_def}
\mathrm{Plug}_{\cS}(H^\star):=\|\wTL-\gabh\|_{\L^2(\mu_X)}^2.
\end{equation}
We control the three terms in expectation.

\medskip
\noindent
\underline{\textbf{Step 1: Control of $\E[\mathrm{Approx}(H^\star)]$ \eqref{eq:approx_fit_star_def}.}}
Fix $\ell\in[L^\star]$. Under Assumption~\ref{Ass:transfer_function}, for $x\in\cA_\ell^\star$ we have
\begin{equation*}
\target(x)=g_\ell^\star(\source(x)).
\end{equation*}
Write $y=\source(x)$.
Since $g_\ell^\star\in \mathrm{\text{H\"ol}}(\beta_g,\frl_g;\R)$ with $\beta_g>1$
(Assumption~\ref{Ass:regularity_transfer}), A first-order Taylor expansion with H\"older remainder yields
\begin{equation*}
\big|g_\ell^\star(y)-g_\ell^\star(y_{\ell})-(g_\ell^\star)'(y_{\ell})(y-y_{\ell})\big|
\lesssim
\frl_g\,|y-y_{\ell}|^{\beta_g}.
\end{equation*}
Using the source H\"older regularity $\source\in\mathrm{\text{H\"ol}}(\beta_{\cS},\frl_{\cS};[0,1]^d)$
(Assumption~\ref{Ass:regularity_source}), for any $x\in\cA_\ell^\star$,
\begin{equation*}
|\source(x)-\source(x_{\ell})|
\lesssim
\frl_{\cS}\,\|x-x_{\ell}\|^{\beta_{\cS}}
\le
\frl_{\cS}\,\Delta_\ell^{\beta_{\cS}},
\end{equation*}
where $\Delta_\ell:=\mathrm{diam}(\cA_\ell^\star)$.
Hence, defining the explicit Taylor linearization on cell $\ell$ (for $x\in\cA_\ell^\star$) by
\begin{equation*}
\widetilde g_\ell(x)
:=
g_\ell^\star(\source(x_{\ell}))+(g_\ell^\star)'(\source(x_{\ell}))\big(\source(x)-\source(x_{\ell})\big),
\end{equation*}
we obtain the uniform bound on $\cA_\ell^\star$,
\begin{equation*}
|\target(x)-\widetilde g_\ell(x)|
=
\big|g_\ell^\star(\source(x))-\widetilde g_\ell(x)\big|
\lesssim
\frl_g\,\Delta_\ell^{\beta_g\beta_{\cS}}.
\end{equation*}
Since $\gslin$ is the $\L^2(\mu_X)$-projection of $\target$ onto the cellwise linear class
$\{x\mapsto a(\source(x)-\source(x_{\ell}))+b\}$ on $\cA_\ell^\star$, we have
\begin{align*}
\int_{\cA_\ell^\star}\big(\target(x)-\gslin(x)\big)^2\,d\mu_X(x)
&\le
\int_{\cA_\ell^\star}\big(\target(x)-\widetilde g_\ell(x)\big)^2\,d\mu_X(x)\\
&\lesssim
\frl_g^2\,\Delta_\ell^{2\beta_g\beta_{\cS}}\,\mu_X(\cA_\ell^\star).
\end{align*}
Summing over $\ell$ yields
\begin{equation*}
\mathrm{Approx}(H^\star)
\lesssim
\frl_g^2\sum_{\ell=1}^{L^\star}\mu_X(\cA_\ell^\star)\Delta_\ell^{2\beta_g\beta_{\cS}}
\le
\frl_g^2\,\Delta_{\max}(H^\star)^{2\beta_g\beta_{\cS}},
\end{equation*}
where $\Delta_{\max}(H^\star):=\max_{\ell\in[L^\star]}\Delta_\ell$.
By Assumption~\ref{Ass:quasi_uniform_target} (or the corresponding part of Assumption~\ref{Ass:quasi_uniform}),
$\Delta_{\max}(H^\star)\lesssim (L^\star)^{-1/d}$, hence
\begin{equation*}
\mathrm{Approx}(H^\star)\lesssim (L^\star)^{-2\beta_g\beta_{\cS}/d}.
\end{equation*}
In particular, this bound is deterministic and therefore 
\begin{equation*}
\E[\mathrm{Approx}(H^\star)]\lesssim (L^\star)^{-2\beta_g\beta_{\cS}/d}.
\end{equation*}

\medskip
\noindent
\underline{\textbf{Step 2: Control of $\E[\mathrm{Fit}_{\cT_1}(H^\star)]$ \eqref{eq:approx_fit_star_def}.}}
We control the statistical error incurred by estimating the linear coefficients on each cell
using the oracle score.
Under Assumptions~\ref{Ass:local-design} and~\ref{Ass:ESS_plugin}, the weighted Gram matrices are uniformly
well-conditioned on each cell and the effective sample size satisfies
$n_{\mathrm{eff}}^{\star,\ell}\asymp n_{\cT_1}h^d\overline h$ (uniformly in $\ell$) on the event
$\cE_{\mathrm{ess}}$.
Moreover, by Assumption~\ref{Ass:noise} and standard concentration for weighted least squares with
sub-exponential noise (applied cellwise and then combined over $\ell$), there exists a constant $\frc>0$ such that,
for all $t\ge 1$, on an event of probability at least $1-2e^{-t}$,
\begin{equation*}
\max_{\ell\in[L^\star]}\|\overline\theta_\ell-\theta_\ell^\star\|_2^2
\lesssim
\frac{t}{n_{\cT_1}h^d\overline h}.
\end{equation*}
Integrating this tail bound with respect to $t$ (or equivalently using $\E[\max_\ell U_\ell]\lesssim \log(L^\star)\sup_t$-type bounds),
we obtain
\begin{equation*}
\E\Big[\max_{\ell\in[L^\star]}\|\overline\theta_\ell-\theta_\ell^\star\|_2^2\Big]
\lesssim
\frac{1+\log(L^\star)}{n_{\cT_1}h^d\overline h}.
\end{equation*}
Finally, using that on each cell $\cA_\ell^\star$ the feature vector is two-dimensional and uniformly bounded
(on the support of the local weights, by boundedness of $K_z$ and the score localization $|\source(X)-\source(x_{\ell})|\le \overline h$),
we have
\begin{equation*}
\mathrm{Fit}_{\cT_1}(H^\star)
=
\sum_{\ell=1}^{L^\star}\mu_X(\cA_\ell^\star)\,
\E\Big[\langle\phi_\ell^\star(X),\overline\theta_\ell-\theta_\ell^\star\rangle^2\,\Big|\,X\in\cA_\ell^\star\Big]
\lesssim
\max_{\ell\in[L^\star]}\|\overline\theta_\ell-\theta_\ell^\star\|_2^2,
\end{equation*}
since features $\phi_\ell^\star$ are automatically bounded (by compactness of $[0,1]^d$ and smoothness of $\source$). Hence, by Lemma \ref{lem:cellwise_ols_tail_union_ess},
\begin{equation*}
\E[\mathrm{Fit}_{\cT_1}(H^\star)]
\lesssim
\frac{1+\log(L^\star)}{n_{\cT_1}h^d\overline h}.
\end{equation*}

\medskip
\noindent
\underline{\textbf{Step 3: Control of $\E[\mathrm{Plug}_{\cS}(H^\star)]$ \eqref{eq:plug_star_def}.}}
We now control the additional error incurred by replacing $\source$ with $\wsource$ in both the score and the
weights, and in the final prediction.
Write for $x\in\cA_{\ell^\star(x)}^\star$,
\begin{equation*}
\gabh(x)=\overline a_{\ell^\star(x)}\big(\source(x)-\source(x_{\ell^\star(x)})\big)+\overline b_{\ell^\star(x)},
\end{equation*}
and
\begin{equation*}
\wTL(x)=\widehat a_{\ell^\star(x)}\big(\wsource(x)-\wsource(x_{\ell^\star(x)})\big)+\widehat b_{\ell^\star(x)}.
\end{equation*}
so that we have
\begin{multline*}
\wTL(x)-\gabh(x)
=
\langle\widehat\theta_{\ell^\star(x)}-\overline\theta_{\ell^\star(x)},\widehat\phi_{\ell^\star(x)}(x)\rangle
\\+
\overline a_{\ell^\star(x)}\Big((\wsource-\source)(x)-(\wsource-\source)(x_{\ell^\star(x)})\Big),
\end{multline*}
where $\widehat\phi_{\ell^\star(x)}(x)=(1,\wsource(x)-\wsource(x_{\ell^\star(x)}))^\top$.
Therefore,
\begin{multline*}
\mathrm{Plug}_{\cS}(H^\star)
\lesssim
\sum_{\ell=1}^{L^\star}\mu_X(\cA_\ell^\star)\,
\|\widehat\theta_\ell-\overline\theta_\ell\|_2^2
\\+
\Big(\max_{\ell\in[L^\star]}|\overline a_\ell|^2\Big)
\Bigg(
\|\wsource-\source\|_{\L^2(\mu_X)}^2
+
\max_{\ell\in[L^\star]}|\wsource(x_{\ell})-\source(x_{\ell})|^2
\Bigg).
\end{multline*}

\smallskip
\noindent
\textit{(i) Oracle vs.\ plug-in coefficient gap.}
By Lemma~\ref{lem:oracle-plugin-gap} applied on each cell and a union bound over $\ell\in[L^\star]$,
on the event $\cE_{\mathrm{ess}}$ and under the plug-in condition \eqref{eq:plug-in_condition},
we obtain
\begin{equation*}
\E\Big[\max_{\ell\in[L^\star]}\|\widehat\theta_\ell-\overline\theta_\ell\|_2^2\Big]
\lesssim
\Big(\frac{\E[\epsilon_{\cS}]}{\overline h}\Big)^2
+
\frac{1+\log(L^\star)}{n_{\cT_1}h^d\overline h}.
\end{equation*}
Moreover, the plug-in regime \eqref{eq:plug-in_condition} ensures that the second term dominates the purely quadratic
$\big(\log(L^\star)/(n_{\cT_1}h^d\overline h)\big)^2$ contribution arising in Lemma~\ref{lem:oracle-plugin-gap}.
Using the sup-norm control \eqref{Eq:epsS_rate} for $\epsilon_{\cS}$ and the fact that
$\E[\epsilon_{\cS}^2]\lesssim (1+\log(L^\star))\,n_{\cS}^{-2\beta_{\cS}/(2\beta_{\cS}+d)}$ (by integrating the tail bound),
we obtain
\begin{equation*}
\E\Big[\sum_{\ell=1}^{L^\star}\mu_X(\cA_\ell^\star)\|\widehat\theta_\ell-\overline\theta_\ell\|_2^2\Big]
\lesssim
\frac{1+\log(L^\star)}{n_{\cT_1}h^d\overline h}
+
(1+\log(L^\star))\,n_{\cS}^{-\frac{2\beta_{\cS}}{2\beta_{\cS}+d}}.
\end{equation*}

\smallskip
\noindent
\textit{(ii) Pure source plug-in error in the score.}
By standard nonparametric regression bounds for the Nadaraya-Watson estimator (see Appendix \ref{App:NW}) under
Assumptions~\ref{Ass:design_source}, \ref{Ass:regularity_source}, \ref{Ass:noise} and \ref{Ass:kernels}, we have
\begin{equation*}
\E\big[\|\wsource-\source\|_{\L^2(\mu_X)}^2\big]\lesssim n_{\cS}^{-\frac{2\beta_{\cS}}{2\beta_{\cS}+d}},
\end{equation*}
and, by a union bound over the $L^\star$ anchor points (together with the usual pointwise deviation for $\wsource$),
\begin{equation*}
\E\Big[\max_{\ell\in[L^\star]}|\wsource(x_{\ell})-\source(x_{\ell})|^2\Big]
\lesssim
(1+\log(L^\star))\,n_{\cS}^{-\frac{2\beta_{\cS}}{2\beta_{\cS}+d}}.
\end{equation*}
Finally, under Assumption~\ref{Ass:regularity_transfer} and the local Gram regularity
(Assumption~\ref{Ass:local-design}), the oracle slope coefficients $\overline a_\ell$ are uniformly bounded in
second moment, and in particular
\begin{equation*}
\E\Big[\max_{\ell\in[L^\star]}|\overline a_\ell|^2\Big]\lesssim 1+\log(L^\star),
\end{equation*}
so that
\begin{multline*}
\E\Big[\Big(\max_{\ell\in[L^\star]}|\overline a_\ell|^2\Big)
\Big(\|\wsource-\source\|_{\L^2(\mu_X)}^2+\max_{\ell\in[L^\star]}|\wsource(x_{\ell})-\source(x_{\ell})|^2\Big)\Big]
\\\lesssim
(1+\log(L^\star))^2\,n_{\cS}^{-\frac{2\beta_{\cS}}{2\beta_{\cS}+d}}.
\end{multline*}
Combining $(i)$ and $(ii)$, we obtain
\begin{equation*}
\E[\mathrm{Plug}_{\cS}(H^\star)]
\lesssim
\frac{1+\log(L^\star)}{n_{\cT_1}h^d\overline h}
+
(1+\log(L^\star))^2\,n_{\cS}^{-\frac{2\beta_{\cS}}{2\beta_{\cS}+d}}.
\end{equation*}

\medskip
\noindent
\underline{\textbf{Step 4: Conclusion.}}
Taking expectations in the deterministic decomposition and combining the bounds from Steps~1-3 yields
\begin{align*}
\E\big[\cR(\wTL)-\cR(\target)\big]
&=
\E\big[\|\wTL-\target\|_{\L^2(\mu_X)}^2\big]\\
&\lesssim
(L^\star)^{-2\beta_g\beta_{\cS}/d}
+
\frac{1+\log(L^\star)}{n_{\cT_1}h^d\overline h}
+
(1+\log(L^\star))^2\,n_{\cS}^{-\frac{2\beta_{\cS}}{2\beta_{\cS}+d}},
\end{align*}
which provides the expected bound.

\subsection{Proof of Theorem~2 (local linear transfer model)}
%\ref{Thm:fixed_tessellation_rate}
\label{App:fixed_tessellation_rate}

The proof of Theorem~2 follows the same overall structure as that of Theorem~1. The main differences are the following:
\begin{itemize}
\item the bounds are established in high probability rather than in expectation;
\item the analysis is carried out for a fixed tessellation $H$, instead of the oracle tessellation;
\item Theorem~2 is stated under the \nameref{subsec:well-specified_local} setting, rather than the \nameref{subsec:well-specified}, and consequently the approximation term $\mathrm{Approx}(H)$ differs.
\end{itemize}
For these reasons, and for the sake of completeness, we provide the full proof of Theorem~2, even though it shares substantial similarities (and some repetitions) with the proof of Theorem~1.
\\

\noindent
Fix a tessellation $H\in\cH$.
Recall that for the squared-loss risk $\cR(f)=\E[(Y-f(X))^2]$ under the model
$Y=\target(X)+\varepsilon$ with $\E[\varepsilon| X]=0$,
\begin{equation*}
\cR(f)-\cR(\target)=\|f-\target\|_{\L^2(\mu_X)}^2.
\end{equation*}
We therefore control $\|\wtess-\target\|_{\L^2(\mu_X)}^2$.

\medskip
\noindent
\textit{Auxiliary functions and deterministic decomposition.}
For any $\ell\in[L_H]$, define the centered source regressor
\begin{equation*}
z_{H,\ell}: x\in\cA_{H,\ell}\mapsto \source(x)-\source(x_{H,\ell}),
\end{equation*}
where $x_{H,\ell}$ is the anchor point of $\cA_{H,\ell}$.
Let $\ell_H:[0,1]^d\to[L_H]$ denote the index of the cell containing $x$.

\smallskip
\noindent
\textit{Transfer linearization $\gslin$.}
For each cell $\cA_{H,\ell}$, define the population cellwise least-squares coefficients
\begin{equation*}
(a_{H,\ell}^\star,b_{H,\ell}^\star)
\in
\argmin_{a,b\in\R}
\E\Big[\big(\target(X)-a z_{H,\ell}(X)-b\big)^2 \,\Big|\, X\in\cA_{H,\ell}\Big].
\end{equation*}
Define the piecewise-linear function
\begin{equation*}
\gslin(x)
=
a_{H,\ell_H(x)}^\star
\bigl(\source(x)-\source(x_{H,\ell_H(x)})\bigr)
+
b_{H,\ell_H(x)}^\star.
\end{equation*}

\smallskip
\noindent
\textit{Source oracle $\gabh$.}
Let $\ov{\theta}_{H,\ell}=(\ov b_{H,\ell},\ov a_{H,\ell})$ denote the cellwise oracle estimator
computed from $\cT_1$ (using the true score $\source(X_i)-\source(x_{H,\ell})$), and define
\begin{equation*}
\gabh(x)
=
\ov a_{H,\ell_H(x)}
\bigl(\source(x)-\source(x_{H,\ell_H(x)})\bigr)
+
\ov b_{H,\ell_H(x)}.
\end{equation*}

\smallskip
\noindent
Using Pythagoras'
theorem (since $\gslin$ is the $\L^2(\mu_X)$-projection of $\target$ on the
cellwise linear class induced by $z_{H,\ell}$), we obtain the deterministic decomposition
\begin{equation*}
\|\target-\wtess\|_{\L^2(\mu_X)}^2
\le
3\,\mathrm{Approx}(H)
+
3\,\mathrm{Fit}_{\cT_1}(H)
+
3\,\mathrm{Plug}_{\cS}(H),
\end{equation*}
where
\begin{equation}\label{eq:approx_fit_def}
\mathrm{Approx}(H):=\|\target-\gslin\|_{\L^2(\mu_X)}^2,
\qquad
\mathrm{Fit}_{\cT_1}(H):=\|\gslin-\gabh\|_{\L^2(\mu_X)}^2,
\end{equation}
and
\begin{equation}\label{eq:plug_def}
\mathrm{Plug}_{\cS}(H):=\|\wtess-\gabh\|_{\L^2(\mu_X)}^2.
\end{equation}

\medskip
\noindent
\underline{\textbf{Control of $\mathrm{Approx}(H)$ \eqref{eq:approx_fit_def}.}}
Fix $\ell\in[L_H]$ and take $x'=x_{H,\ell}$ in Assumption~\ref{Ass:local_transfer}.
For all $x\in\cA_{H,\ell}$,
\begin{align*}
\Big|\target(x)-\big(a^\star(x_{H,\ell})(\source(x)-\source(x_{H,\ell}))+b^\star(x_{H,\ell})\big)\Big|
&\le
\frlloc\,\|x-x_{H,\ell}\|^{1+\betaloc}\\
&\le
\frlloc\,\Delta_{H,\ell}^{1+\betaloc}.
\end{align*}
Hence the best linear predictor in the score $z_{H,\ell}$ satisfies
\begin{equation*}
\E\Big[\big(\target(X)-a z_{H,\ell}(X)-b\big)^2 \,\Big|\, X\in\cA_{H,\ell}\Big]
\lesssim
\Delta_{H,\ell}^{2(1+\betaloc)},
\end{equation*}
and therefore
\begin{equation*}
\mathrm{Approx}(H)
\le
\sum_{\ell\in[L_H]} p_{H,\ell}\,\Delta_{H,\ell}^{2(1+\betaloc)}
\le
\Delta_{\max}(H)^{2(1+\betaloc)}.
\end{equation*}
By Assumption~\ref{Ass:quasi_uniform}, $\Delta_{\max}(H)\lesssim L_H^{-1/d}$, hence
\begin{equation*}
\mathrm{Approx}(H)\lesssim L_H^{-2(1+\betaloc)/d}.
\end{equation*}

\medskip
\noindent
\underline{\textbf{Control of $\mathrm{Fit}_{\cT_1}(H)$ \eqref{eq:approx_fit_def}.}}
Work on the event $\cE_{\mathrm{ess}}$ from Assumption~\ref{subass:ESS}.
Apply Lemma~\ref{lem:cellwise_ols_tail_union_ess} with confidence level $\delta/3$.
There exists an event $\mathcal O_{\delta/3}$ such that
\begin{equation*}
\P(\mathcal O_{\delta/3}| \cE_{\mathrm{ess}})\ge 1-\delta/3,
\end{equation*}
and on $\mathcal O_{\delta/3}$, simultaneously for all $\ell\in[L_H]$,
\begin{equation*}
\|\ov{\theta}_{H,\ell}-\theta^\star_{H,\ell}\|_2^2
\lesssim
\frac{\sigma_{\cT}^2}{\lambda_{H,\min}^2}\,
\frac{\log(2L_H/\delta)}{n_{\mathrm{eff}}^{H,\ell}}
+
\frac{B_{H,\ell}^2}{\lambda_{H,\min}^2}.
\end{equation*}
Under Assumption~\ref{subass:ESS}, on $\cE_{\mathrm{ess}}$ we have
$n_{\mathrm{eff}}^{H,\ell}\asymp n_{\cT_1}h^d\overline h$ uniformly in $\ell$.
\\
\noindent
It remains to bound $B_{H,\ell}$. Define the cellwise remainder, for all $x\in\cA_{H,\ell}$, by
\begin{equation*}
r_{H,\ell}(x)
:=
\target(x)-\big(a^\star_{H,\ell}(\source(x)-\source(x_{H,\ell}))+b^\star_{H,\ell}\big),
\qquad .
\end{equation*}
By Assumption~\ref{Ass:local_transfer} (taking again $x'=x_{H,\ell}$), for all $x\in\cA_{H,\ell}$,
\begin{equation*}
|r_{H,\ell}(x)|\le \frlloc\,\Delta_{H,\ell}^{1+\betaloc}.
\end{equation*}
Moreover, by Assumption~\ref{Ass:bounded_features},
$\|\phi_{H,\ell}(x)\|_2\le \phi_{\max}$, so Lemma~\ref{lem:cellwise_ols_tail_union_ess}
yields
\begin{equation*}
B_{H,\ell}
\le
\phi_{\max}\,\sup_{x\in\cA_{H,\ell}}|r_{H,\ell}(x)|
\lesssim
\Delta_{H,\ell}^{1+\betaloc}.
\end{equation*}
Combining these bounds, on $\cE_{\mathrm{ess}}\cap\mathcal O_{\delta/3}$ we obtain for all $\ell\in[L_H]$,
\begin{equation*}
\|\ov{\theta}_{H,\ell}-\theta^\star_{H,\ell}\|_2^2
\lesssim
\frac{\log(2L_H/\delta)}{n_{\cT_1}h^d\overline h}
+
\Delta_{H,\ell}^{2(1+\betaloc)}.
\end{equation*}
Finally, using $\|\phi_{H,\ell}(X)\|_2\le \phi_{\max}$ and the definition of $\mathrm{Fit}_{\cT_1}(H)$,
\begin{align*}
\mathrm{Fit}_{\cT_1}(H)
&=
\sum_{\ell\in[L_H]} p_{H,\ell}\,
\E\Big[
\langle\phi_{H,\ell}(X),\ov{\theta}_{H,\ell}-\theta^\star_{H,\ell}\rangle^2
\ \Big|\ X\in\cA_{H,\ell}
\Big]\\
&\le
\phi_{\max}^2
\sum_{\ell\in[L_H]} p_{H,\ell}\,\|\ov{\theta}_{H,\ell}-\theta^\star_{H,\ell}\|_2^2,
\end{align*}
where we have used Assumption \ref{Ass:bounded_features}.
Then on $\cE_{\mathrm{ess}}\cap\mathcal O_{\delta/3}$,
\begin{equation*}
\mathrm{Fit}_{\cT_1}(H)
\lesssim
\frac{\log(2L_H/\delta)}{n_{\cT_1}h^d\overline h}
+
\sum_{\ell\in[L_H]} p_{H,\ell}\Delta_{H,\ell}^{2(1+\betaloc)}
\lesssim
\frac{\log(2L_H/\delta)}{n_{\cT_1}h^d\overline h}
+
L_H^{-2(1+\betaloc)/d}.
\end{equation*}
The last term is of the same order as $\mathrm{Approx}(H)$ and can be absorbed into it.

\medskip
\noindent
\underline{\textbf{Control of $\mathrm{Plug}_{\cS}(H)$.}}
Write, for $x\in\cA_{H,\ell}$,
\begin{equation*}
\gabh(x)=\ov a_{H,\ell}\big(\source(x)-\source(x_{H,\ell})\big)+\ov b_{H,\ell},
\quad
\wtess(x)=\widehat a_{H,\ell}\big(\wsource(x)-\wsource(x_{H,\ell})\big)+\widehat b_{H,\ell}.
\end{equation*}
Decompose $\wtess-\gabh$ into a coefficient gap plus a pure plug-in term:
\begin{align*}
\wtess(x)-\gabh(x)
&=
(\widehat a_{H,\ell}-\ov a_{H,\ell})\big(\source(x)-\source(x_{H,\ell})\big)
+(\widehat b_{H,\ell}-\ov b_{H,\ell})\\
&\quad
+\widehat a_{H,\ell}\Big(\big(\wsource(x)-\wsource(x_{H,\ell})\big)-\big(\source(x)-\source(x_{H,\ell})\big)\Big).
\end{align*}
Using Assumption~\ref{Ass:bounded_features}
(to control $\|\phi_{H,\ell}(x)\|_2$), we obtain
\begin{multline*}
\mathrm{Plug}_{\cS}(H)
\lesssim
\sum_{\ell\in[L_H]} p_{H,\ell}\,\|\widehat\theta_{H,\ell}-\ov{\theta}_{H,\ell}\|_2^2
\;+\;
\Big(\max_{\ell\in[L_H]}|\widehat a_{H,\ell}|^2\Big)\,
\|\wsource-\source\|_{\L^2(\mu_X)}^2
\\+
\Big(\max_{\ell\in[L_H]}|\widehat a_{H,\ell}|^2\Big)\,
\max_{\ell\in[L_H]}|\wsource(x_{H,\ell})-\source(x_{H,\ell})|^2.
\end{multline*}
\noindent
Now intersect with a standard source event $\cE_{\cS}$ such that
$\P(\cE_{\cS})\ge 1-\delta/3$ and on which
\begin{equation*}
\|\wsource-\source\|_{\L^2(\mu_X)}^2
\lesssim
n_{\cS}^{-\frac{2\beta_{\cS}}{2\beta_{\cS}+d}},
\end{equation*}
and
\begin{equation*}
\max_{\ell\in[L_H]}|\wsource(x_{H,\ell})-\source(x_{H,\ell})|^2
\lesssim
\log\Big(\frac{L_H}{\delta}\Big)\,
n_{\cS}^{-\frac{2\beta_{\cS}}{2\beta_{\cS}+d}}.
\end{equation*}
Moreover, on $\cE_{\mathrm{ess}}\cap\mathcal O_{\delta/3}$ and using
Assumption~\ref{Ass:bounded_features}, we have a uniform control
$\max_{\ell}|\widehat a_{H,\ell}|\lesssim 1+\log(L_H/\delta)$ (the logarithm comes from
the same union bound over cells used to control the local fits).
Finally, applying Lemma~\ref{lem:oracle-plugin-gap} at level $\delta/(3L_H)$ and taking
a union bound over $\ell\in[L_H]$, we obtain an event $\mathcal E_{\delta/3}$ such that
$\P(\mathcal E_{\delta/3}| \cE_{\mathrm{ess}})\ge 1-\delta/3$ and on which
\begin{equation*}
\max_{\ell\in[L_H]}\|\widehat\theta_{H,\ell}-\ov{\theta}_{H,\ell}\|_2^2
\lesssim
\frac{\log(L_H/\delta)}{n_{\cT_1}h^d\overline h},
\end{equation*}
where we used the plug-in regime $n_{\cT_1}h^d\overline h\gtrsim \log(1/\delta)$ and
the smallness condition $\epsilon_{\cS}/\overline h\le \frc_0$ implicit in
Lemma~\ref{lem:oracle-plugin-gap}.
\\
Combining the previous displays yields, on
$\cE_{\mathrm{ess}}\cap\mathcal O_{\delta/3}\cap\mathcal E_{\delta/3}\cap\cE_{\cS}$,
\begin{equation*}
\mathrm{Plug}_{\cS}(H)
\lesssim
\frac{\log(L_H/\delta)}{n_{\cT_1}h^d\overline h}
+
\log\Big(\frac{L_H}{\delta}\Big)\Bigl(1+\log\Big(\frac{L_H}{\delta}\Big)\Bigr)\,
n_{\cS}^{-\frac{2\beta_{\cS}}{2\beta_{\cS}+d}}.
\end{equation*}

\medskip
\noindent
\underline{\textbf{Conclusion.}}
Let
\begin{equation*}
\mathcal G_\delta
:=
\cE_{\mathrm{ess}}\cap\mathcal O_{\delta/3}\cap\mathcal E_{\delta/3}\cap \cE_{\cS}.
\end{equation*}
By Assumption~\ref{subass:ESS} and the conditional probability statements above, a union bound gives
$\P(\mathcal G_\delta)\ge 1-\delta$.
On $\mathcal G_\delta$, combining the bounds on $\mathrm{Approx}(H)$,
$\mathrm{Fit}_{\cT_1}(H)$ and $\mathrm{Plug}_{\cS}(H)$ and absorbing the
$L_H^{-2(1+\betaloc)/d}$ contribution from $\mathrm{Fit}_{\cT_1}(H)$ into
$\mathrm{Approx}(H)$ yields
\begin{align*}
\cR(\wtess)-\cR(\target)
&=
\|\wtess-\target\|_{\L^2(\mu_X)}^2\\
&\lesssim
L_H^{-2(1+\beta_{\mathrm{loc}})/d}
+
\frac{1}{n_{\cT_1}h^d\overline h}
\log\Big(\frac{L_H}{\delta}\Big)
\\
&\quad+
\log\Big(\frac{L_H}{\delta}\Big)
\Bigl(1+\log\Big(\frac{L_H}{\delta}\Big)\Bigr)
\,n_{\cS}^{-\frac{2\beta_{\cS}}{2\beta_{\cS}+d}}.
\end{align*}
This proves the result.

\subsection{Empirical risk minimization}\label{App:ERM}

\subsubsection{Proof of Proposition 1 (ERM oracle inequality in expectation)} %\ref{Prop:ERM_expectation}}
\label{App:ERM_expectation}
\noindent

\begin{lem}[Uniform second moment of the validation loss]\label{lem:uniform_second_moment_loss}
Suppose Assumptions~\ref{Ass:noise}, \ref{Ass:H_local-design}, 
\ref{Ass:bounded_features} and \ref{Ass:ESS_plugin} hold.
Then there exists a constant $\frc>0$ such that, on the stability event
$\cE_{\mathrm{ess}}$ 
\begin{equation*}
\sup_{H\in\cH}
\E\left[\cL_H(X,Y)^2 \,\big|\, \cE_{\mathrm{ess}}\right]
\le \frc,
\end{equation*}
where $\cL_H(X,Y)=(Y-\wh\mu_H(X))^2$.
\end{lem}

\begin{proof}
Write $Y=\target(X)+\varepsilon$ and set
$\Delta_H(X):=\wtess(X)-\target(X)$. Then
\begin{equation*}
\cL_H(X,Y)^2
=
\big(\varepsilon-\Delta_H(X)\big)^4
\le
8\big(\varepsilon^4+\Delta_H(X)^4\big).
\end{equation*}
By Assumption~\ref{Ass:noise}, $\E[\varepsilon^4]<\infty$.
It therefore suffices to bound
\begin{equation*}
\sup_{H\in\cH}\E\!\left[\Delta_H(X)^4 \,\big|\, \cE_{\mathrm{ess}}\right],
\end{equation*}
on the stability event $\cE_{\mathrm{ess}}$ and under the local Gram
regularity assumption (Assumption~\ref{Ass:H_local-design}) together with
the corresponding weighted Gram condition
\begin{equation*}
\lambda_{H,\min} I_2
\preceq
G_{H,\ell}
\preceq
\lambda_{H,\max} I_2,
\qquad
\forall H\in\cH,\ \forall \ell\in[L_H].
\end{equation*}
Fix $H\in\cH$ and let $\ell_H(X)$ denote the index such that
$X\in\cA_{H,\ell_H(X)}$.
On each cell, by construction of $\wtess$, for all $x\in\cA_{H,\ell}$ we have
\begin{equation*}
\wtess(x)
=
\widehat\theta_{H,\ell}^\top \phi_{H,\ell}(x),
\end{equation*}
and by the local linear decomposition, for all $x\in\cA_{H,\ell}$,
\begin{equation*}
\target(x)
=
(\theta^\star_{H,\ell})^\top \phi_{H,\ell}(x)
+
r_{H,\ell}(x).
\end{equation*}
Hence for $x\in\cA_{H,\ell}$,
\begin{equation*}
\Delta_H(x)
=
(\widehat\theta_{H,\ell}-\theta^\star_{H,\ell})^\top \phi_{H,\ell}(x)
-
r_{H,\ell}(x).
\end{equation*}
\noindent
Using $(u+v)^4\le 8(u^4+v^4)$ and Cauchy-Schwarz, we obtain
\begin{align*}
|\Delta_H(x)|^4
&\le
8\big|\langle \widehat\theta_{H,\ell}-\theta^\star_{H,\ell},
\phi_{H,\ell}(x)\rangle\big|^4
+
8|r_{H,\ell}(x)|^4\\
&\le
8\phi_{\max}^4
\|\widehat\theta_{H,\ell}-\theta^\star_{H,\ell}\|_2^4
+
8r_{\max}^4.
\end{align*}
On the stability event described above, the weighted least-squares estimator
$\widehat\theta_{H,\ell}$ is well defined and the Gram matrix $G_{H,\ell}$ is uniformly
well conditioned. In particular, by
Lemma~\ref{lem:cellwise_ols_tail_union_ess} and
Assumption~\ref{Ass:ESS_plugin}, the fourth moment of
$\|\widehat\theta_{H,\ell}-\theta^\star_{H,\ell}\|_2$
is finite and uniformly bounded over $(H,\ell)$ on $\cE_{\mathrm{ess}}$.
\\
Taking expectations yields
\begin{equation*}
\sup_{H\in\cH}
\E\!\left[\Delta_H(X)^4 \,\big|\, \cE_{\mathrm{ess}}\right]
\le
\frc_1
\end{equation*}
for some constant $\frc_1<\infty$ depending only on
$\phi_{\max}, r_{\max}$ and the Gram/ESS constants.
Combining with the first display concludes the proof.
\end{proof}

\noindent
\textit{Proof of Proposition 1.}
For $H\in\cH$, define
\begin{equation*}
\cR(H)=\E\!\left[(Y-\wtess(X))^2\right],
\qquad
\wh{\cR}(H)=\frac{1}{n_{\cT_2}}\sum_{i\in\cT_2}(Y_i-\wtess(X_i))^2.
\end{equation*}
Let $\cL_H(X,Y):=(Y-\wtess(X))^2$, so that
$\cR(H)=\E[\cL_H(X,Y)]$ and $\wh{\cR}(H)$ is the empirical mean of $\cL_H$
over $\cT_2$.\\
Conditionally on the training data $(\cD_{\cS},\cD_{\cT_1})$ used to construct
$\wtess$, the variables
$\{\cL_H(X_i,Y_i)\}_{i\in\cT_2}$ are i.i.d.\ with mean $\cR(H)$.
Hence
\begin{align*}
\E\left[
\big(\wh{\cR}(H)-\cR(H)\big)^2
\big|\,
\cD_{\cS},\cD_{\cT_1}
\right]
&=
\frac{\Var[\cL_H(X,Y)\mid \cD_{\cS},\cD_{\cT_1}]}{n_{\cT_2}}
\\
&\le
\frac{\E[\cL_H(X,Y)^2\mid \cD_{\cS},\cD_{\cT_1}]}{n_{\cT_2}}.
\end{align*}
\noindent
On the stability event $\cE_{\mathrm{ess}}$ together with the local Gram
regularity assumption (Assumption~\ref{Ass:H_local-design}) and the
corresponding weighted Gram condition
\begin{equation*}
\lambda_{H,\min} I_2
\preceq
G_{H,\ell}
\preceq
\lambda_{H,\max} I_2,
\end{equation*}
Lemma~\ref{lem:uniform_second_moment_loss} ensures that
\begin{equation*}
\sup_{H\in\cH}
\E\!\left[\cL_H(X,Y)^2\right]
\le
\frc_0
\end{equation*}
for some constant $\frc_0<\infty$.
Taking expectations therefore yields
\begin{equation*}
\sup_{H\in\cH}
\E\!\left[
\big(\wh{\cR}(H)-\cR(H)\big)^2
\right]
\le
\frac{\frc_0}{n_{\cT_2}}.
\end{equation*}

Now set $U_H:=\wh{\cR}(H)-\cR(H)$.
Using
\begin{equation*}
\sup_{H\in\cH}|U_H|
\le
\Big(\sum_{H\in\cH}U_H^2\Big)^{1/2}
\end{equation*}
and Jensen's inequality, we obtain
\begin{equation*}
\E\!\left[\sup_{H\in\cH}|U_H|\right]
\le
\E\!\left[\Big(\sum_{H\in\cH}U_H^2\Big)^{1/2}\right]
\le
\Big(\sum_{H\in\cH}\E[U_H^2]\Big)^{1/2}
\le
\sqrt{\frac{\frc_0|\cH|}{n_{\cT_2}}}.
\end{equation*}

Finally, by definition of $\wh H$ and $H^{\mathrm{or}}$,
\begin{equation*}
\cR(\wh H)
\le
\wh{\cR}(\wh H)
+
\sup_{H\in\cH}|\wh{\cR}(H)-\cR(H)|
\le
\wh{\cR}(H^{\mathrm{or}})
+
\sup_{H\in\cH}|\wh{\cR}(H)-\cR(H)|.
\end{equation*}
Adding and subtracting $\cR(H^{\mathrm{or}})$ and taking expectations yields
\begin{equation*}
\E[\cR(\wh H)]
\le
\cR(H^{\mathrm{or}})
+
2\,\E\!\left[\sup_{H\in\cH}|\wh{\cR}(H)-\cR(H)|\right]
\le
\cR(H^{\mathrm{or}})
+
\frc\,\sqrt{\frac{|\cH|}{n_{\cT_2}}},
\end{equation*}
for a universal constant $\frc>0$.

\subsubsection{Proof of Proposition 2 (Median-of-Means oracle inequality)}\label{App:ERM_MoM}%\ref{Prop:ERM_MoM}

\noindent
We work conditionally on the training data used to build $\{\wh\mu_H:H\in\cH\}$, so that the validation
observations $\{(X_i,Y_i)\}_{i\in\cT_2}$ are i.i.d.\ and independent of $\{\wh\mu_H:H\in\cH\}$.
For $H\in\cH$, define the validation loss and risk
\begin{equation*}
\cL_H(X,Y):=\big(Y-\wh\mu_H(X)\big)^2,
\qquad
\cR(H):=\E\big[\cL_H(X,Y)\big].
\end{equation*}

\medskip
\noindent
\underline{\textbf{Step 1: Uniform second-moment bound for $\cL_H$.}}
Write $Y=\target(X)+\varepsilon$ and set $\Delta_H(X):=\wh\mu_H(X)-\target(X)$, so that
\begin{equation*}
\cL_H(X,Y)^2
=
\big(\varepsilon-\Delta_H(X)\big)^4
\le
8\big(\varepsilon^4+\Delta_H(X)^4\big).
\end{equation*}
By assumption, $\E[\varepsilon^4]<\infty$, hence it suffices to bound $\sup_{H\in\cH}\E[\Delta_H(X)^4]$.
\\
Fix $H\in\cH$ and let $\ell_H(X)$ be the cell index such that $X\in\cA_{H,\ell_H(X)}$.
On each cell $\cA_{H,\ell}$, by definition of $\wh\mu_H$ we may write
\begin{equation*}
\wh\mu_H(x)=\wh\theta_{H,\ell}^\top \phi_{H,\ell}(x),
\qquad x\in\cA_{H,\ell}.
\end{equation*}
Moreover, by the local linear decomposition (definition of $r_{H,\ell}$), for all $x\in\cA_{H,\ell}$,
\begin{equation*}
\target(x)=(\theta^\star_{H,\ell})^\top \phi_{H,\ell}(x)+r_{H,\ell}(x).
\end{equation*}
Therefore, for $x\in\cA_{H,\ell}$,
\begin{equation*}
\Delta_H(x)
=
(\wh\theta_{H,\ell}-\theta^\star_{H,\ell})^\top \phi_{H,\ell}(x)
-
r_{H,\ell}(x).
\end{equation*}
Using $(u+v)^4\le 8(u^4+v^4)$ and Cauchy-Schwarz, Assumption~\ref{Ass:bounded_features} yields
\begin{equation*}
|\Delta_H(x)|^4
\le
8\big|\langle \wh\theta_{H,\ell}-\theta^\star_{H,\ell},\phi_{H,\ell}(x)\rangle\big|^4
+
8|r_{H,\ell}(x)|^4
\le
8\phi_{\max}^4\|\wh\theta_{H,\ell}-\theta^\star_{H,\ell}\|_2^4
+
8r_{\max}^4.
\end{equation*}
Hence,
\begin{equation*}
\E\big[\Delta_H(X)^4\big]
\le
8\phi_{\max}^4\,\E\Big[\|\wh\theta_{H,\ell_H(X)}-\theta^\star_{H,\ell_H(X)}\|_2^4\Big]
+
8r_{\max}^4.
\end{equation*}
\noindent
It remains to show that
$\sup_{H\in\cH}\sup_{\ell\in[L_H]}\E[\|\wh\theta_{H,\ell}-\theta^\star_{H,\ell}\|_2^4] <\infty$.
Fix $(H,\ell)$ and condition on the design $\{X_i\}_{i\in\cT_1^{H,\ell}}$.
On $\cE_{\mathrm{ess}}$ and under the weighted Gram condition,
the weighted least-squares estimator satisfies the exact representation
\begin{equation*}
\wh\theta_{H,\ell}-\theta^\star_{H,\ell}
=
G_{H,\ell}^{-1}
\Bigg(
\frac{1}{n_{H,\ell}}\Psi_{H,\ell}^\top W_{H,\ell}\varepsilon
+
\frac{1}{n_{H,\ell}}\Psi_{H,\ell}^\top W_{H,\ell} r_{H,\ell}
\Bigg).
\end{equation*}
Using $\|G_{H,\ell}^{-1}\|_{\op}\le \lambda_{\min}^{-1}$ and the triangle inequality,
\begin{equation*}
\|\wh\theta_{H,\ell}-\theta^\star_{H,\ell}\|_2
\le
\frac{1}{\lambda_{\min}}
\Bigg(
\Big\|\frac{1}{n_{H,\ell}}\Psi_{H,\ell}^\top W_{H,\ell}\varepsilon\Big\|_2
+
B_{H,\ell}
\Bigg),
\end{equation*}
where
\begin{equation*}
B_{H,\ell}
:=
\Big\|
\frac{1}{n_{H,\ell}}\Psi_{H,\ell}^\top W_{H,\ell} r_{H,\ell}
\Big\|_2.
\end{equation*}
By Assumption~\ref{Ass:bounded_features}, for all $x\in\cA_{H,\ell}$ we have
$\|\phi_{H,\ell}(x)\|_2\le \phi_{\max}$ and $|r_{H,\ell}(x)|\le r_{\max}$, hence
\begin{equation*}
B_{H,\ell}
=
\Big\|
\frac{1}{n_{H,\ell}}\sum_{i\in\cT_1^{H,\ell}} w_{i,\ell}\phi_{H,\ell}(X_i)r_{H,\ell}(X_i)
\Big\|_2
\le
\frac{1}{n_{H,\ell}}\sum_{i\in\cT_1^{H,\ell}} |w_{i,\ell}|\,\phi_{\max}r_{\max}.
\end{equation*}
Since the kernels are bounded, $|w_{i,\ell}|\le \|K_x\|_\infty\|K_z\|_\infty h^{-d}\overline h^{-1}$,
and therefore $B_{H,\ell}<\infty$ deterministically for fixed $(h,\overline h)$.\\
For the noise term, note that each coordinate of
$n_{H,\ell}^{-1}\Psi_{H,\ell}^\top W_{H,\ell}\varepsilon$ is a weighted sum of the independent
centered variables $\{\varepsilon_i\}_{i\in\cT_1^{H,\ell}}$ with bounded weights and bounded
multipliers $\phi_{H,\ell}(X_i)$, hence has finite fourth moment whenever $\E[\varepsilon^4]<\infty$.
Moreover, on $\cE_{\mathrm{ess}}$ the effective sample size lower bound prevents degeneracy of the weights,
so the resulting fourth moments are bounded by a constant depending only on
$\E[\varepsilon^4]$, kernel envelopes, $(h,\overline h)$, and $\phi_{\max}$.
Consequently, there exists a constant $\frc_0<\infty$ such that, on $\cE_{\mathrm{ess}}$,
\begin{equation*}
\sup_{H\in\cH}\sup_{\ell\in[L_H]}
\E\Big[\|\wh\theta_{H,\ell}-\theta^\star_{H,\ell}\|_2^4 \,\Big|\, \cE_{\mathrm{ess}}\Big]
\le \frc_0.
\end{equation*}
Combining the previous displays yields that there exists $\frc_1<\infty$ such that, on $\cE_{\mathrm{ess}}$,
\begin{equation*}
\sup_{H\in\cH}\E\big[\cL_H(X,Y)^2 \,\big|\, \cE_{\mathrm{ess}}\big]
\le
\frc_1(\sigma^2+\cR_{\max})^2,
\end{equation*}
where we used that $\cR_{\max}$ controls $\sup_H \E[\Delta_H(X)^2]$ and thus fixes the scale of the
second moment (up to universal constants).

\medskip
\noindent
\underline{\textbf{Step 2: Median-of-means deviation for a fixed $H$.}}
Partition $\cT_2$ into $B$ disjoint blocks $\{\cB_b\}_{b=1}^B$ of equal size
$m:=\lfloor n_{\cT_2}/B\rfloor$.
For each $H\in\cH$ and $b\in[B]$, define
\begin{equation*}
\wh\cR_b(H)
:=
\frac{1}{m}\sum_{i\in\cB_b}\cL_H(X_i,Y_i),
\qquad
\wh\cR_{\mathrm{MoM}}(H)
:=
\mathrm{median}\big(\wh\cR_1(H),\dots,\wh\cR_B(H)\big).
\end{equation*}
Fix $H\in\cH$.
On $\cE_{\mathrm{ess}}$, by Chebyshev's inequality and Step~1, there exists a universal constant $\frc_2>0$
such that for all $t>0$,
\begin{equation*}
\P\Big(|\wh\cR_b(H)-\cR(H)|>t \,\Big|\, \cE_{\mathrm{ess}}\Big)
\le
\frac{\Var(\cL_H(X,Y)\mid \cE_{\mathrm{ess}})}{m t^2}
\le
\frac{\frc_2(\sigma^2+\cR_{\max})^2}{m t^2}.
\end{equation*}
Choose
\begin{equation*}
t
=
\frc_3(\sigma^2+\cR_{\max})\sqrt{\frac{1}{m}},
\end{equation*}
with $\frc_3>0$ large enough so that the above probability is at most $1/8$.
Then, for each $b$, the indicators $\mathbf 1_{\{|\wh\cR_b(H)-\cR(H)|>t\}}$ are Bernoulli with mean at most $1/8$.
By a Chernoff bound, there exists a universal constant $\frc_4>0$ such that
\begin{equation*}
\P\Big(
\#\{b\in[B]:|\wh\cR_b(H)-\cR(H)|>t\}\ge B/2
\,\Big|\,\cE_{\mathrm{ess}}
\Big)
\le
2\exp(-\frc_4 B).
\end{equation*}
Since the median is bad only if at least half of the blocks are bad, this yields
\begin{equation*}
\P\Big(
|\wh\cR_{\mathrm{MoM}}(H)-\cR(H)|>t
\,\Big|\,\cE_{\mathrm{ess}}
\Big)
\le
2\exp(-\frc_4 B).
\end{equation*}

\medskip
\noindent
\underline{\textbf{Step 3: Uniform control and oracle inequality.}}
Applying a union bound over $H\in\cH$ gives
\begin{equation*}
\P\Big(
\sup_{H\in\cH}|\wh\cR_{\mathrm{MoM}}(H)-\cR(H)|>t
\,\Big|\,\cE_{\mathrm{ess}}
\Big)
\le
2|\cH|\exp(-\frc_4 B).
\end{equation*}
Choosing $B\ge \frc\log(|\cH|/\delta)$ with $\frc$ large enough ensures that the right-hand side is at most $\delta$.
Moreover, since $m\asymp n_{\cT_2}/B$, the choice of $t$ in Step~2 gives
\begin{equation*}
t
\asymp
(\sigma^2+\cR_{\max})\sqrt{\frac{B}{n_{\cT_2}}}
\asymp
(\sigma^2+\cR_{\max})\sqrt{\frac{\log(|\cH|/\delta)}{n_{\cT_2}}}.
\end{equation*}
Hence, with conditional probability at least $1-\delta$ given $\cE_{\mathrm{ess}}$,
\begin{equation*}
\sup_{H\in\cH}|\wh\cR_{\mathrm{MoM}}(H)-\cR(H)|
\lesssim
(\sigma^2+\cR_{\max})\sqrt{\frac{\log(|\cH|/\delta)}{n_{\cT_2}}}.
\end{equation*}
On this event, let $H^{\mathrm{or}}\in\arg\min_{H\in\cH}\cR(H)$ and recall that
$\wh H_{\mathrm{MoM}}\in\arg\min_{H\in\cH}\wh\cR_{\mathrm{MoM}}(H)$, so
\begin{align*}
\cR(\wh H_{\mathrm{MoM}})
&\le
\wh\cR_{\mathrm{MoM}}(\wh H_{\mathrm{MoM}})
+
\sup_{H\in\cH}|\wh\cR_{\mathrm{MoM}}(H)-\cR(H)|\\
&\le
\wh\cR_{\mathrm{MoM}}(H^{\mathrm{or}})
+
\sup_{H\in\cH}|\wh\cR_{\mathrm{MoM}}(H)-\cR(H)|\\
&\le
\cR(H^{\mathrm{or}})
+
2\sup_{H\in\cH}|\wh\cR_{\mathrm{MoM}}(H)-\cR(H)|.
\end{align*}
This yields, with probability at least $1-\delta$ on $\cE_{\mathrm{ess}}$,
\begin{equation*}
\cR(\wh H_{\mathrm{MoM}})
\le
\min_{H\in\cH}\cR(H)
+
\frc'(\sigma^2+\cR_{\max})\sqrt{\frac{\log(|\cH|/\delta)}{n_{\cT_2}}}.
\end{equation*}
Finally, since the weighted Gram condition and the effective sample size event are assumed to hold on $\cE_{\mathrm{ess}}$,
we may absorb $\P(\cE_{\mathrm{ess}}^c)$ into the overall failure probability.
This concludes the proof.
\color{black}

\section[Proof of Theorem 5 (Oracle lower bound)]{Proof of Theorem 5: Oracle lower bound}\label{app:proof_lower_bound}%\ref{Thm:transfer_lower_bound}

\noindent
To establish the minimax lower bound in the local linear transfer model, we restrict attention to a convenient parametric submodel contained in the function class under consideration. Moreover, since centered Gaussian noise $\mathcal N(0,\sigma^2)$ is sub-Gaussian and therefore sub-exponential, it suffices to derive the lower bound under the Gaussian noise submodel. Any lower bound proved in this restricted setting applies \emph{a fortiori} to the full model with sub-exponential noise.
\\
Throughout the proof, we assume that $\source,\target\in\mathrm{Höl}(\beta,\frl;[0,1]^d)$ and we grant the estimator oracle access to the true
tessellation $H^\star=\{\cA_\ell^\star:\ell\in[L^\star]\}$, the anchor points
$\{x_\ell\}_{\ell\in[L^\star]}$ (with $x_\ell=x_{H^\star,\ell}$).
We work under Assumptions~\ref{Ass:transfer_function},
\ref{Ass:star_local-design}, and~\ref{Ass:lower_mass-balance}.
This can only make the estimation problem easier, hence any lower bound derived
in this oracle setting applies to the original problem.

\medskip
\noindent
Fix $r_\star:=\Delta_{\min}(H^\star)$.
Let $\{\varphi_\ell\}_{\ell\in[L^\star]}$ be a collection of smooth bump functions
such that $\varphi_\ell$ is supported in $\cA_\ell^\star$,
$\|\varphi_\ell\|_{\L^2}^2\asymp |\cA_\ell^\star|$, and $\|\varphi_\ell\|_\infty\le 1$.
Let $\{\psi_\ell\}_{\ell\in[L^\star]}$ be a collection of functions supported in
$\cA_\ell^\star$ such that $\psi_\ell\in \mathrm{H\ddot{o}l}(1+\beta_{\mathrm{loc}},1;\cA_\ell^\star)$ and
\begin{equation*}
\|\psi_\ell\|_{\L^2(\cA_\ell^\star)}^2 \asymp r_\star^{2(1+\beta_{\mathrm{loc}})}\,|\cA_\ell^\star|.
\end{equation*}
%(Such a family can be obtained by scaling a fixed $\mathrm{H\ddot{o}l}(1+\beta_{\mathrm{loc}})$ bump
%at spatial scale $r_\star$; the above $\L^2$ scaling is standard.)
For vectors $u,v,w\in\{-1,+1\}^{L^\star}$, define a source/target pair
$(\source^{w},\target^{u,v,w})$ as follows.
First, define the source function as
\begin{equation*}
\source^{w}(x)
:=
\sum_{\ell=1}^{L^\star} w_\ell\,\delta_{\cS}\,\varphi_\ell(x),
\end{equation*}
where $\delta_{\cS}>0$ will be chosen later so that
$\source^w\in\mathrm{H\ddot{o}l}(\beta,\frl;[0,1]^d)$ and the KL divergences remain bounded.
Next, on each leaf $\cA_\ell^\star$, define coefficients
\begin{equation*}
a_\ell(u):=a_0,\qquad b_\ell(u):=u_\ell\,\delta_{\cT},
\end{equation*}
with fixed $a_0\in(0,A_{\max}]$ and $\delta_{\cT}>0$ chosen later, and define for all $x\in\cA_\ell^\star$ the
remainder
\begin{equation*}
R_{\ell,v}(x):= v_\ell\,\psi_\ell(x).
\end{equation*}
Finally, define the target function on each leaf by the structured transfer form
\begin{equation*}
\target^{u,v,w}(x)
=
a_\ell(u)\bigl(\source^w(x)-\source^w(x_\ell)\bigr)
+
b_\ell(u)
+
R_{\ell,v}(x),
\end{equation*}
for all $x\in\cA_\ell^\star$.
By construction, for $\delta_{\cS},\delta_{\cT}$ sufficiently small (depending only on
$\beta,\frl,A_{\max},B_{\max}$ and the constants in the regularity assumptions),
we have $(\source^w,\target^{u,v,w})\in \cF(H^\star,\beta,\beta,\beta_{\mathrm{loc}})$
and the uniform bounds $\|a^\star\|_\infty\le A_{\max}$, $\|b^\star\|_\infty\le B_{\max}$ hold.
Moreover, the mapping $(u,v,w)\mapsto (\source^w,\target^{u,v,w})$ is injective.

\medskip
\noindent
Let $\mathbb P^{u,v,w}$ denote the joint law of all observations
(source sample of size $n_{\cS}$ and target sample of size $n_{\cT}$)
under $(\source^w,\target^{u,v,w})$ with Gaussian noise $\mathcal N(0,\sigma^2)$
on both samples.
We apply Assouad's lemma (in the standard form, see e.g. \cite{tsybakov2008nonparametric}, Theorem~2.12)
to the hypercube $\{-1,+1\}^{3L^\star}$, using the squared $\L^2(\mu)$-loss.
It suffices to control: $(i)$ the $\L^2(\mu)$-separation between neighboring vertices,
and $(ii)$ the KL divergence between neighboring laws. The construction is cellwise,
hence both quantities factorize over $\ell\in[L^\star]$, and the resulting lower
bounds add over the three blocks of signs $u,v,w$.

\medskip
\noindent
\textbf{Step 1: Parametric term $\sigma^2 L^\star/n_{\cT}$.}
Consider neighbors that differ only in one coordinate $u_\ell$.
Then the source is unchanged, and the target differs on $\cA_\ell^\star$ by
\begin{equation*}
\target^{u,v,w}(x)-\target^{u',v,w}(x)=2\delta_{\cT}\,\mathbf 1_{\{x\in\cA_\ell^\star\}}.
\end{equation*}
Hence, using $\mu(\cA_\ell^\star)\gtrsim 1/L^\star$ by Assumption~\ref{Ass:lower_mass-balance},
\begin{equation*}
\|\target^{u,v,w}-\target^{u',v,w}\|_{\L^2(\mu)}^2
\gtrsim
\delta_{\cT}^2\,\mu(\cA_\ell^\star)
\gtrsim
\frac{\delta_{\cT}^2}{L^\star}.
\end{equation*}
Moreover, the KL divergence between $\mathbb P^{u,v,w}$ and $\mathbb P^{u',v,w}$
comes only from the target sample, and equals (conditionally on the target design)
\begin{align*}
\mathrm{KL}(\mathbb P^{u,v,w}\|\mathbb P^{u',v,w}| X^{\cT})
&=
\frac{1}{2\sigma^2}\sum_{i\in\cT}\big(\target^{u,v,w}(X_i)-\target^{u',v,w}(X_i)\big)^2\\
&=
\frac{2\delta_{\cT}^2}{\sigma^2}\,\#\{i\in\cT: X_i\in\cA_\ell^\star\}.
\end{align*}
Taking expectation and using Assumption~\ref{Ass:design_target} and the mass-balance condition,
$\E[\#\{i\in\cT:X_i\in\cA_\ell^\star\}]\asymp n_{\cT}\mu(\cA_\ell^\star)\asymp n_{\cT}/L^\star$, we get
\begin{equation*}
\mathrm{KL}(\mathbb P^{u,v,w}\|\mathbb P^{u',v,w})
\lesssim
\frac{\delta_{\cT}^2}{\sigma^2}\,\frac{n_{\cT}}{L^\star}.
\end{equation*}
Choose $\delta_{\cT}^2 \asymp \sigma^2 L^\star/n_{\cT}$ so that this KL is bounded by a universal constant.
Assouad's lemma then yields
\begin{equation*}
\inf_{\widehat f}\ \sup_{u,v,w}\ \E_{u,v,w}\big[\|\widehat f-\target^{u,v,w}\|_{\L^2(\mu)}^2\big]
\gtrsim
L^\star\cdot \frac{\delta_{\cT}^2}{L^\star}
\asymp
\frac{\sigma^2 L^\star}{n_{\cT}}.
\end{equation*}

\medskip
\noindent
\textbf{Step 2: Approximation term $(\Delta_{\min}(H^\star))^{2(1+\beta_{\mathrm{loc}})}$.}
Consider neighbors that differ only in one coordinate $v_\ell$.
Then the target differs on $\cA_\ell^\star$ by $2\psi_\ell$ and thus
\begin{equation*}
\|\target^{u,v,w}-\target^{u,v',w}\|_{\L^2(\mu)}^2
\gtrsim
\|\psi_\ell\|_{\L^2(\mu)}^2
\asymp
r_\star^{2(1+\beta_{\mathrm{loc}})}\,\mu(\cA_\ell^\star)
\gtrsim
\frac{r_\star^{2(1+\beta_{\mathrm{loc}})}}{L^\star}.
\end{equation*}
The corresponding KL divergence again comes only from the target sample:
conditionally on $X^{\cT}$,
\begin{equation*}
\mathrm{KL}(\mathbb P^{u,v,w}\|\mathbb P^{u,v',w}| X^{\cT})
=
\frac{1}{2\sigma^2}\sum_{i\in\cT}\big(\psi_\ell(X_i)-(-\psi_\ell(X_i))\big)^2
=
\frac{2}{\sigma^2}\sum_{i\in\cT}\psi_\ell(X_i)^2.
\end{equation*}
Taking expectation and using $\E[\psi_\ell(X)^2]\asymp \|\psi_\ell\|_{\L^2(\mu)}^2$
gives
\begin{equation*}
\mathrm{KL}(\mathbb P^{u,v,w}\|\mathbb P^{u,v',w})
\lesssim
\frac{n_{\cT}}{\sigma^2}\,\|\psi_\ell\|_{\L^2(\mu)}^2
\asymp
\frac{n_{\cT}}{\sigma^2}\,\frac{r_\star^{2(1+\beta_{\mathrm{loc}})}}{L^\star}.
\end{equation*}
Since the theorem concerns a fixed $H^\star$ (hence fixed $r_\star$), we may choose the constant
hidden in the definition of $\psi_\ell$ small enough so that this KL is bounded by a universal constant
(depending only on $\sigma$ and the constants in the assumptions), uniformly in $(n_{\cT},L^\star)$.
Assouad's lemma (\cite{tsybakov2008nonparametric}, Theorem~2.12) then yields
\begin{align*}
\inf_{\widehat f}\ \sup_{u,v,w}\ \E_{u,v,w}\big[\|\widehat f-\target^{u,v,w}\|_{\L^2(\mu)}^2\big]
&\gtrsim
L^\star\cdot \frac{r_\star^{2(1+\beta_{\mathrm{loc}})}}{L^\star}\\
&=
r_\star^{2(1+\beta_{\mathrm{loc}})}
=
(\Delta_{\min}(H^\star))^{2(1+\beta_{\mathrm{loc}})}.
\end{align*}

\medskip
\noindent
\textbf{Step 3: Source-estimation term $n_{\cS}^{-\frac{2\beta}{2\beta+d}}$.}
Consider neighbors that differ only in one coordinate $w_\ell$.
Then the source differs on $\cA_\ell^\star$ by $2\delta_{\cS}\varphi_\ell$ and,
because $\target^{u,v,w}$ depends on $\source^w$ through the term
$a_0(\source^w(x)-\source^w(x_\ell))$, the induced difference in the target satisfies for all $x\in\cA_\ell^\star$,
\begin{equation*}
\target^{u,v,w}(x)-\target^{u,v,w'}(x)
=
a_0\bigl(\source^w(x)-\source^{w'}(x)\bigr)
-
a_0\bigl(\source^w(x_\ell)-\source^{w'}(x_\ell)\bigr).
\end{equation*}
By construction of $\varphi_\ell$ with $\varphi_\ell(x_\ell)=1$ and $\|\varphi_\ell\|_{\L^2}^2\asymp|\cA_\ell^\star|$,
we obtain
\begin{equation*}
\|\target^{u,v,w}-\target^{u,v,w'}\|_{\L^2(\mu)}^2
\gtrsim
a_0^2\,\delta_{\cS}^2\,\mu(\cA_\ell^\star)
\gtrsim
\frac{\delta_{\cS}^2}{L^\star}.
\end{equation*}
The KL divergence between $\mathbb P^{u,v,w}$ and $\mathbb P^{u,v,w'}$ splits into the source part
and the target part. Under Gaussian noise and conditional on the designs,
\begin{multline*}
\mathrm{KL}(\mathbb P^{u,v,w}\|\mathbb P^{u,v,w'}| X^{\cS},X^{\cT})
\\=
\frac{1}{2\sigma^2}\sum_{i\in\cS}\big(\source^w(X_i)-\source^{w'}(X_i)\big)^2
+
\frac{1}{2\sigma^2}\sum_{i\in\cT}\big(\target^{u,v,w}(X_i)-\target^{u,v,w'}(X_i)\big)^2.
\end{multline*}
Taking expectations and using the design bounds yields
\begin{equation*}
\mathrm{KL}(\mathbb P^{u,v,w}\|\mathbb P^{u,v,w'})
\lesssim
\frac{n_{\cS}\delta_{\cS}^2}{\sigma^2}
+
\frac{n_{\cT}\delta_{\cS}^2}{\sigma^2}.
\end{equation*}
Since we are proving a lower bound, we may further grant the estimator oracle access to the
entire target sample (thus potentially reducing the effect of the second term), and we keep only the source
contribution:
\begin{equation*}
\mathrm{KL}(\mathbb P^{u,v,w}\|\mathbb P^{u,v,w'})
\lesssim
\frac{n_{\cS}\delta_{\cS}^2}{\sigma^2}.
\end{equation*}
Choose $\delta_{\cS}\asymp n_{\cS}^{-\frac{\beta}{2\beta+d}}$ (as in the standard Le Cam/Assouad construction for $\beta$-Hölder regression)
so that the KL is bounded by a universal constant while the $\L^2(\mu)$-separation is of order
$\delta_{\cS}^2/L^\star$ on one cell.
Assouad's lemma then yields
\begin{equation*}
\inf_{\widehat f}\ \sup_{u,v,w}\ \E_{u,v,w}\big[\|\widehat f-\target^{u,v,w}\|_{\L^2(\mu)}^2\big]
\gtrsim
L^\star\cdot \frac{\delta_{\cS}^2}{L^\star}
\asymp
n_{\cS}^{-\frac{2\beta}{2\beta+d}}.
\end{equation*}

\medskip
\noindent
\textbf{Step 4: Conclusion.}
By the product structure of the hypercube and the additivity of the $\L^2(\mu)$-separation and KL bounds over the
three blocks of signs $(u,v,w)$, Assouad's lemma yields a lower bound given by the sum of the three contributions
obtained in Steps~1-3. Therefore, there exists a constant $\frc>0$ depending only on
$d,\beta,\sigma,A_{\max},B_{\max}$ and the constants in
Assumptions~\ref{Ass:design}, \ref{Ass:star_local-design}, and \ref{Ass:lower_mass-balance} such that
\begin{multline*}
\inf_{\widehat f}
\sup_{(\source,\target)\in\cF(H^\star,\beta,\beta,\beta_{\mathrm{loc}})}
\Big[
\cR(\widehat f)-\cR(\target)
\Big]
\\\ge
\frc\bigg[
\frac{\sigma^2 L^\star}{n_{\cT}}
+
(\Delta_{\min}(H^\star))^{2(1+\beta_{\mathrm{loc}})}+
n_{\cS}^{-\frac{2\beta}{2\beta+d}}
\bigg].
\end{multline*}
The second display of the theorem follows immediately since $\wTL$ is a particular estimator, hence its worst-case
excess risk is bounded below by the minimax risk.

\color{black}

\section[Proof of Theorem 6: transfer map)]{Proof of Theorem 6: transfer map estimation}\label{app:proof_transfer_function_bound}%\ref{Thm:global_g_risk_fixed_y}

\noindent
Assume that the target tessellation $(\cA_\ell^\star)_{\ell\in[L^\star]}$ on which
Assumption~\ref{Ass:transfer_function} holds is known. In this section, we study the
estimation of the cellwise transfer functions $(g_\ell^\star)_{\ell\in[L^\star]}$.

\subsection{Local polynomial estimators}

\textbf{We work conditionally on the source sample $\cD_{\cS}$.
Since the source estimator $\wsource$ depends only on $\cD_{\cS}$ and is independent
of the target sample $\cD_{\cT}$, it can be treated as deterministic in the
target-side analysis.} Throughout, the data follow the model: for $\ell\in[L^\star]$, 
\begin{equation*}\label{Eq:regression_wsource}
Y_i = g_\ell^\star\big(\source(X_i)\big) + \varepsilon_i,
\qquad i\in\cT_1,
\end{equation*}
where $\E[\varepsilon_i|X_i]=0$ and Assumption~\ref{Ass:noise} holds on the target
sample, i.e.\ $\|\varepsilon_i\|_{\psi_1}\le \sigma_{\cT}$ for all $i\in\cT_1$. Since $\source$ is unknown, the local regression estimator is constructed by replacing the covariate $\source(X_i)$ with its estimator $\wsource(X_i)$ in the weighted least-squares criterion.
\\
Assume that the functions $g_\ell^\star$ satisfy Assumption~\ref{Ass:regularity_transfer}.
In particular, for any $\ell\in[L^\star]$ and any $u$ in a neighborhood of
$\wsource(x)$, we have the first-order Taylor expansion
\begin{equation*}\label{Eq:Taylor_g}
g_\ell^\star(u)
=
g_\ell^\star(\wsource(x))
+
(g_\ell^\star)'\big(\wsource(x)\big)\,\big(u-\wsource(x)\big)
+
R_{\ell,x}(u),
\end{equation*}
with remainder bounded by
\begin{equation}\label{Eq:R_star_bound}
|R_{\ell,x}(u)|
\le
\frl_g\,|u-\wsource(x)|^{\beta_g}.
\end{equation}
\noindent
Let $K_x:\R_+\to\R$ and $K_z:\R_+\to\R$ be bounded kernels supported on $[0,1]$ satisfying Assumption \ref{Ass:kernels},
and let $h>0$ and $\overline h>0$ be bandwidths. For each cell $\cA_\ell^\star$,
with reference point $x_{\ell}$, define the \emph{cellwise local least-squares estimator}
\begin{multline}\label{Eq:LS_regression_wsource}
(\widehat b_{\ell},\widehat a_{\ell})
\in
\argmin_{b,a\in\R}
\Bigg\{
\frac{1}{n_{\cT_1}}\sum_{i\in\cT_1}
\Big[Y_i- a\big(\wsource(X_i)-\wsource(x_{\ell})\big)-b\Big]^2\\
\times
K_{x,h}(\|X_i-x_{\ell}\|)
\, K_{z,\overline h}\big(|\wsource(X_i)-\wsource(x_{\ell})|\big)
\Bigg\},
\end{multline}
where $K_{x,h}=h^{-d}K_x(\cdot/h)$ and $K_{z,\overline h}=\overline h^{-1}K_z(\cdot/\overline h)$.
Write $\psi(t)=(1,t)^\top$ and define, for $i\in\cT_1$,
\begin{equation*}\label{Eq:phi_w_def}
\phi_{i,\ell}
=
\psi\Big(\frac{\wsource(X_i)-\wsource(x_{\ell})}{\overline h}\Big)
\end{equation*}
and
\begin{equation*}
w_{i,\ell}
=
K_{x,h}(\|X_i-x_{\ell}\|)
K_{z,\overline h}\big(|\wsource(X_i)-\wsource(x_{\ell})|\big).
\end{equation*}
Let $S_\ell=\sum_{i\in\cT_1} w_{i,\ell}$. Then, letting
\begin{equation*}\label{Eq:M_and_m_wsource}
M_{\ell}
=\frac{1}{S_\ell}\sum_{i\in\cT_1} w_{i,\ell}\,\phi_{i,\ell}\phi_{i,\ell}^\top
\quad\text{and}\quad
m_{\ell}
=\frac{1}{S_\ell}\sum_{i\in\cT_1} w_{i,\ell}\,\phi_{i,\ell}\,Y_i,
\end{equation*}
we can write
\begin{equation*}\label{Eq:wtheta_w_def}
\widehat\theta_{\ell}
= M_{\ell}^{-1}m_{\ell},
\quad
\widehat b_{\ell}=e_1^\top\widehat\theta_{\ell}
\quad\text{and}\quad
\widehat a_{\ell}=e_2^\top\widehat\theta_{\ell}.
\end{equation*}
The associated predictor on $\cA_\ell^\star$ is
\begin{equation*}\label{Eq:predictor_wsource}
\widehat f_\ell(x')
=
\widehat b_{\ell}
+\widehat a_{\ell}\big(\wsource(x')-\wsource(x_{\ell})\big).
\end{equation*}
\noindent
For clarity, define for any $k\in\N_0$,
\begin{equation*}\label{Eq:Sk_Tk_def}
S_{k,\ell}=\sum_{i\in\cT_1} w_{i,\ell}\big(\wsource(X_i)-\wsource(x_{\ell})\big)^k
\;\text{and}\;
T_{k,\ell}=\sum_{i\in\cT_1} w_{i,\ell}\big(\wsource(X_i)-\wsource(x_{\ell})\big)^kY_i.
\end{equation*}
Then we have
\begin{equation*}\label{Eq:closed_forms_wsource}
\widehat b_{\ell}
=\frac{S_{2,\ell}T_{0,\ell}{-}S_{1,\ell}T_{1,\ell}}{S_{2,\ell}S_{0,\ell}{-}S_{1,\ell}^2}
=\sum_{i\in\cT_1} W_{i,\ell}Y_i
\end{equation*}
and
\begin{equation*}
\widehat a_{\ell}
=\frac{S_{0,\ell}T_{1,\ell}{-}S_{1,\ell}T_{0,\ell}}{S_{2,\ell}S_{0,\ell}{-}S_{1,\ell}^2}
=\sum_{i\in\cT_1}\overline W_{i,\ell}Y_i,
\end{equation*}
where
\begin{equation*}
\varpi_{i,\ell}
=
w_{i,\ell}\big[S_{2,\ell}{-}(\wsource(X_i)-\wsource(x_{\ell}))S_{1,\ell}\big],
\end{equation*}
as well as
\begin{equation*}\label{Eq:varpi_kappa_def}
\kappa_{i,\ell}
=
w_{i,\ell}\big[(\wsource(X_i){-}\wsource(x_{\ell}))S_{0,\ell}{-}S_{1,\ell}\big],
\end{equation*}
and the normalized weights are
\begin{equation*}\label{Eq:weights_def}
W_{i,\ell}=\frac{\varpi_{i,\ell}}{\sum_{j\in\cT_1}\varpi_{j,\ell}}
\quad\text{and}\quad
\overline W_{i,\ell}=\frac{\kappa_{i,\ell}}{\sum_{j\in\cT_1}\varpi_{j,\ell}}.
\end{equation*}
They satisfy
\begin{equation*}\label{Eq:moment_identity}
\sum_{i\in\cT_1}\kappa_{i,\ell}\big(\wsource(X_i)-\wsource(x_{\ell})\big)
=
\sum_{i\in\cT_1}\varpi_{i,\ell}.
\end{equation*}

\begin{lem}[Polynomial reproduction]\label{Lem:regression_polynomial}
Let $p:\R\to\R$ be linear: $p(y)=a_p\big(y-\wsource(x_{\ell})\big)+b_p$.
Then
\begin{equation}\label{Eq:poly_repro_intercept}
\sum_{i\in\cT_1} W_{i,\ell}\,p(\wsource(X_i))=p(\wsource(x_{\ell}))=b_p,
\end{equation}
and
\begin{equation}\label{Eq:poly_repro_slope}
\sum_{i\in\cT_1}\overline W_{i,\ell}\,p(\wsource(X_i))=p'(\wsource(x_{\ell}))=a_p.
\end{equation}
\end{lem}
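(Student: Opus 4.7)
The plan is to verify the four weighted moment identities
\begin{equation*}
\sum_{i\in\cT_1}W_{i,\ell}=1,\quad
\sum_{i\in\cT_1}W_{i,\ell}\,\delta_{i,\ell}=0,\quad
\sum_{i\in\cT_1}\overline W_{i,\ell}=0,\quad
\sum_{i\in\cT_1}\overline W_{i,\ell}\,\delta_{i,\ell}=1,
\end{equation*}
where $\delta_{i,\ell}:=\wsource(X_i)-\wsource(x_{\ell^\star})$, and then to deduce the two reproduction identities by linearity of $p$. This is the standard way to derive polynomial exactness of local linear fits, and it exploits only the algebraic structure of the weights $\varpi_{i,\ell}$ and $\kappa_{i,\ell}$.

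First I would expand the common denominator: from the definition of $\varpi_{i,\ell}$,
\begin{equation*}
\sum_{i\in\cT_1}\varpi_{i,\ell}
= S_{0,\ell}S_{2,\ell}-S_{1,\ell}\,S_{1,\ell}
= S_{0,\ell}S_{2,\ell}-S_{1,\ell}^2,
\end{equation*}
using the definition $S_{k,\ell}=\sum_i w_{i,\ell}\delta_{i,\ell}^k$. Multiplying $\varpi_{i,\ell}$ by $\delta_{i,\ell}$ and summing yields
\begin{equation*}
\sum_{i\in\cT_1}\varpi_{i,\ell}\,\delta_{i,\ell}
= S_{1,\ell}S_{2,\ell}-S_{2,\ell}S_{1,\ell}=0.
\end{equation*}
Similarly,
\begin{equation*}
\sum_{i\in\cT_1}\kappa_{i,\ell}=S_{1,\ell}S_{0,\ell}-S_{0,\ell}S_{1,\ell}=0,
\qquad
\sum_{i\in\cT_1}\kappa_{i,\ell}\,\delta_{i,\ell}=S_{2,\ell}S_{0,\ell}-S_{1,\ell}^2.
\end{equation*}
Dividing by the common denominator $\sum_j\varpi_{j,\ell}=S_{0,\ell}S_{2,\ell}-S_{1,\ell}^2$, which is the Gram determinant and is positive under Assumption~\ref{Ass:star_local-design}, we obtain the four moment identities above.

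Next, write $p(\wsource(X_i))=a_p\,\delta_{i,\ell}+b_p$ and plug into the two sums:
\begin{equation*}
\sum_{i\in\cT_1}W_{i,\ell}\,p(\wsource(X_i))
=a_p\sum_{i}W_{i,\ell}\,\delta_{i,\ell}+b_p\sum_{i}W_{i,\ell}
=0+b_p=b_p,
\end{equation*}
which gives \eqref{Eq:poly_repro_intercept}, and
\begin{equation*}
\sum_{i\in\cT_1}\overline W_{i,\ell}\,p(\wsource(X_i))
=a_p\sum_{i}\overline W_{i,\ell}\,\delta_{i,\ell}+b_p\sum_{i}\overline W_{i,\ell}
=a_p+0=a_p,
\end{equation*}
which is \eqref{Eq:poly_repro_slope}.

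There is no real obstacle: the statement is a purely algebraic consequence of the normal equations for weighted least squares over the two-dimensional span $\{1,\delta_{i,\ell}\}$. The only point that deserves care is ensuring nondegeneracy of the Gram determinant $S_{0,\ell}S_{2,\ell}-S_{1,\ell}^2$, which is guaranteed by the local design regularity assumed in Assumption~\ref{Ass:star_local-design}; outside this event the weights $W_{i,\ell},\overline W_{i,\ell}$ are not even well-defined, and the lemma is stated implicitly on that event.
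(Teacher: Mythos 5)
Your proof is correct, and every algebraic step checks out against the paper's definitions of $\varpi_{i,\ell}$, $\kappa_{i,\ell}$, $W_{i,\ell}$, $\overline W_{i,\ell}$ and $S_{k,\ell}$: the four moment identities $\sum_i W_{i,\ell}=1$, $\sum_i W_{i,\ell}\delta_{i,\ell}=0$, $\sum_i \overline W_{i,\ell}=0$, $\sum_i \overline W_{i,\ell}\delta_{i,\ell}=1$ follow exactly as you compute, and linearity of $p$ then gives both reproduction identities. The route is, however, not quite the one the paper takes: the paper argues variationally, noting that when the responses are exactly $p(\wsource(X_i))$ the weighted least-squares objective in \eqref{Eq:LS_regression_wsource} is minimized (with value zero) at $(b_p,a_p)$, and then reads off the identities from the closed-form expressions \eqref{Eq:closed_forms_wsource}; you instead bypass the optimality argument and verify the discrete moment conditions of the weights directly from the explicit formulas for $\varpi_{i,\ell}$ and $\kappa_{i,\ell}$. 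The paper's version is shorter but implicitly relies on uniqueness of the minimizer (i.e.\ nondegeneracy of the weighted Gram matrix), whereas your computation is fully explicit, isolates exactly which weight identities are being used (identities that reappear in the bias analysis of Proposition~\ref{Prop:risk_wsource}), and makes transparent that nondegeneracy enters only through well-definedness of the normalized weights, i.e.\ positivity of the denominator $S_{0,\ell}S_{2,\ell}-S_{1,\ell}^2$. One small precision: this denominator is the \emph{kernel-weighted} Gram determinant built from the plug-in score $\wsource$, so its positivity is really the condition $\|M_{\ell}^{-1}\|_{\op}\le\lambda_0^{-1}$ used in Lemma~\ref{Lem:weights_bounds_LP} rather than literally Assumption~\ref{Ass:star_local-design}, which concerns the unweighted design with the true source $\source$; this does not affect the validity of your argument, since the lemma is an algebraic identity valid whenever the weights are defined.
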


\begin{proof}
Since $p$ is linear, $(b_p,a_p)$ minimizes the objective in
\eqref{Eq:LS_regression_wsource}. The identities
\eqref{Eq:poly_repro_intercept}-\eqref{Eq:poly_repro_slope} then follow from the
closed-form expressions \eqref{Eq:closed_forms_wsource}.
\end{proof}

\begin{lem}\label{Lem:weights_bounds_LP}
Consider the estimator \eqref{Eq:LS_regression_wsource}. Suppose Assumptions
\ref{Ass:kernels}, \ref{Ass:local-design} and \ref{Ass:ESS_plugin} hold. Then, for any
cell $\cA_\ell^\star$, the following statements hold:
\begin{enumerate}[(i)]
\item For any $i\in\cT_1$, if $\|X_i-x_{\ell}\|>h$ or
$|\wsource(X_i)-\wsource(x_{\ell})|>\overline h$, then
$W_{i,\ell}=0$ and $\overline W_{i,\ell}=0$.

\item On the event in Assumption~\ref{Ass:ESS_plugin},
\begin{equation}\label{Eq:bound_sum_W_LP}
\sum_{i\in\cT_1}|W_{i,\ell}|
\le
\frac{\sqrt{2}\frc_2^{\mathrm{ess}}\,
\|K_x\|_{\infty}\|K_z\|_{\infty}}{\lambda_0},
\end{equation}
and
\begin{equation}\label{Eq:bound_sum_Wbar_LP}
\sum_{i\in\cT_1}|\overline W_{i,\ell}|
\;\le\;
\frac{\sqrt{2}\frc_2^{\mathrm{ess}}}{\overline h}\,
\frac{\|K_x\|_{\infty}\|K_z\|_{\infty}}{\lambda_0}.
\end{equation}

\item On the same event,
\begin{equation}\label{Eq:bound_sum_W2_LP}
\sum_{i\in\cT_1}|W_{i,\ell}|^2
\le
\frac{2\frc_2^{\mathrm{ess}}\,
\|K_x\|_{\infty}^2\|K_z\|_{\infty}^2}
{n_{\cT_1} h^d \overline h\,\lambda_0^2},
\end{equation}
and
\begin{equation}\label{Eq:bound_sum_Wbar2_LP}
\sum_{i\in\cT_1}|\overline W_{i,\ell}|^2
\le
\frac{2\frc_2^{\mathrm{ess}}\,
\|K_x\|_{\infty}^2\|K_z\|_{\infty}^2}
{n_{\cT_1} h^d \overline h^{3}\lambda_0^2}.
\end{equation}
\end{enumerate}
\end{lem}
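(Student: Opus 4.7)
The lemma splits into a support statement and two norm bounds. For part (i), observe that both $W_{i,\ell}$ and $\overline W_{i,\ell}$ are proportional to the kernel weight
\begin{equation*}
w_{i,\ell}=K_{x,h}(\|X_i-x_{\ell^\star}\|)\,K_{z,\overline h}(|\wsource(X_i)-\wsource(x_{\ell^\star})|).
\end{equation*}
Since $K_x$ and $K_z$ are compactly supported on $[0,1]$ by Assumption~\ref{Ass:kernels}, the weight $w_{i,\ell}$ vanishes as soon as $\|X_i-x_{\ell^\star}\|>h$ or $|\wsource(X_i)-\wsource(x_{\ell^\star})|>\overline h$, which is claim (i).

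For parts (ii) and (iii), the plan is to recast the weights through the weighted Gram matrix. Set $\phi_{i,\ell}=(1,Z_i/\overline h)^\top$ with $Z_i=\wsource(X_i)-\wsource(x_{\ell^\star})$, and $M=\sum_{i\in\cT_1}w_{i,\ell}\phi_{i,\ell}\phi_{i,\ell}^\top$. From the closed-form expressions \eqref{Eq:closed_forms_wsource}, a direct calculation identifies
\begin{equation*}
W_{i,\ell}=w_{i,\ell}\,e_1^\top M^{-1}\phi_{i,\ell}
\qquad\text{and}\qquad
\overline W_{i,\ell}=\overline h^{-1}\,w_{i,\ell}\,e_2^\top M^{-1}\phi_{i,\ell},
\end{equation*}
the $\overline h^{-1}$ factor in $\overline W_{i,\ell}$ arising from the rescaling between the native source coordinate $Z_i$ and the normalized one $Z_i/\overline h$. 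On the joint $(h,\overline h)$-window, $\|\phi_{i,\ell}\|_2\le\sqrt{2}$. Combining the local-design Assumption~\ref{Ass:local-design} with the effective sample size bound of Assumption~\ref{Ass:ESS_plugin} gives $\lambda_{\min}(M)\ge \lambda_0 S_\ell$, where $S_\ell=\sum_i w_{i,\ell}\asymp n_{\cT_1}h^d\overline h$. Hence $|W_{i,\ell}|\le\sqrt{2}\,w_{i,\ell}/(\lambda_0 S_\ell)$, and summing against $\sum_i w_{i,\ell}=S_\ell$, together with the envelope bound $w_{i,\ell}\le\|K_x\|_\infty\|K_z\|_\infty/(h^d\overline h)$, yields \eqref{Eq:bound_sum_W_LP} and \eqref{Eq:bound_sum_Wbar_LP}.

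For part (iii), the efficient route uses the quadratic identity
\begin{equation*}
\sum_i W_{i,\ell}^2 = e_1^\top M^{-1}\Big(\sum_i w_{i,\ell}^2\phi_{i,\ell}\phi_{i,\ell}^\top\Big)M^{-1}e_1
\end{equation*}
together with the matrix domination $\sum_i w_{i,\ell}^2\phi_{i,\ell}\phi_{i,\ell}^\top\preceq\|w\|_\infty\,M$. This collapses the quadratic form to $\|w\|_\infty\,e_1^\top M^{-1}e_1\le\|w\|_\infty/(\lambda_0 S_\ell)$. Inserting $\|w\|_\infty\le\|K_x\|_\infty\|K_z\|_\infty/(h^d\overline h)$ and $S_\ell\ge n_{\cT_1}h^d\overline h/\frc_2^{\mathrm{ess}}$ gives \eqref{Eq:bound_sum_W2_LP}; the slope bound \eqref{Eq:bound_sum_Wbar2_LP} follows by applying the same identity to $e_2$, with the additional $\overline h^{-2}$ factor inherited from the rescaling of $\overline W_{i,\ell}$.

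The main technical point is the bandwidth bookkeeping. The slope $\widehat a_\ell$ is expressed in the native source scale, whereas the eigenvalue control supplied by Assumption~\ref{Ass:ESS_plugin} naturally concerns the normalized Gram matrix built from $\psi(Z_i/\overline h)$. Tracking the diagonal rescaling $D=\mathrm{diag}(1,\overline h)$ throughout, so that the factors $\overline h^{-1}$ and $\overline h^{-2}$ appear in the slope bounds but not in the intercept bounds, is the only point requiring care. Beyond this, the calculations reduce to routine local-polynomial manipulations.
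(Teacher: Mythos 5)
Your part (i) is correct, and the identities $W_{i,\ell}=w_{i,\ell}\,e_1^\top M^{-1}\phi_{i,\ell}$ and $\overline W_{i,\ell}=\overline h^{-1}w_{i,\ell}\,e_2^\top M^{-1}\phi_{i,\ell}$ do agree with the closed forms \eqref{Eq:closed_forms_wsource}; your matrix-domination step $\sum_i w_{i,\ell}^2\phi_{i,\ell}\phi_{i,\ell}^\top\preceq\|w\|_\infty M$ is in fact a cleaner route to (iii) than the paper's argument, which instead bounds each $|W_{i,\ell}|$ pointwise by $\sqrt2\,\|K_x\|_\infty\|K_z\|_\infty/(\lambda_0 n_{\cT_1}h^d\overline h)$ and uses $\sum_i W_{i,\ell}^2\le(\max_i|W_{i,\ell}|)\sum_i|W_{i,\ell}|$. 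However, there is a concrete quantitative gap in your bookkeeping: the claim $S_\ell=\sum_i w_{i,\ell}\asymp n_{\cT_1}h^d\overline h$ (and in particular $S_\ell\ge n_{\cT_1}h^d\overline h/\frc_2^{\mathrm{ess}}$) is not what Assumption~\ref{Ass:ESS_plugin} provides, and it is false on the scale of the rescaled kernels: the assumption controls the \emph{count} $N_\ell$ of observations in the joint window, while each nonzero weight carries the normalization $K_{x,h}=h^{-d}K_x(\cdot/h)$, $K_{z,\overline h}=\overline h^{-1}K_z(\cdot/\overline h)$, so $w_{i,\ell}\le\|K_x\|_\infty\|K_z\|_\infty/(h^d\overline h)$ and $S_\ell\le\|K_x\|_\infty\|K_z\|_\infty N_\ell/(h^d\overline h)\asymp n_{\cT_1}$, not $n_{\cT_1}h^d\overline h$. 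Plugging your substitution into your own display gives $\sum_i W_{i,\ell}^2\le\|w\|_\infty/(\lambda_0 S_\ell)\le\frc_2^{\mathrm{ess}}\|K_x\|_\infty\|K_z\|_\infty/(\lambda_0\, n_{\cT_1}h^{2d}\overline h^{2})$, which misses \eqref{Eq:bound_sum_W2_LP} by a factor of order $(h^d\overline h)^{-1}$; so part (iii) is not established as written. (The same inconsistency is why your appeal to the envelope bound in (ii) is superfluous: there $S_\ell$ cancels and you get $\sqrt2/\lambda_0$, which is fine.)

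The repair is a normalization choice consistent with the paper: take $\lambda_0$ to be a lower bound on the smallest eigenvalue of the $n_{\cT_1}$-normalized weighted Gram matrix $n_{\cT_1}^{-1}\sum_i w_{i,\ell}\phi_{i,\ell}\phi_{i,\ell}^\top$, i.e.\ $\lambda_{\min}(M)\ge\lambda_0\,n_{\cT_1}$, which is the scale at which that matrix is of order one (precisely because $S_\ell\asymp n_{\cT_1}$). Then your identities give $|W_{i,\ell}|\le\sqrt2\,w_{i,\ell}/(\lambda_0 n_{\cT_1})\le\sqrt2\,\|K_x\|_\infty\|K_z\|_\infty/(\lambda_0 n_{\cT_1}h^d\overline h)$ on the window; summing over the at most $\frc_2^{\mathrm{ess}}n_{\cT_1}h^d\overline h$ active indices (this is where Assumption~\ref{Ass:ESS_plugin} actually enters, as a count) yields \eqref{Eq:bound_sum_W_LP}, and either your domination identity or the max-times-sum bound then yields \eqref{Eq:bound_sum_W2_LP}; the $\overline W_{i,\ell}$ bounds follow with the extra $\overline h^{-1}$ and $\overline h^{-2}$ from your rescaling, exactly as you describe. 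One further caveat, which the paper shares: Assumption~\ref{Ass:local-design} as stated controls an unweighted Gram built from the true $\source$, so the eigenvalue bound for the weighted, plug-in Gram is really an additional high-probability event and should be stated as such rather than "combined" from the two assumptions.
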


\begin{proof}
We only treat the bound for $W_{i,\ell}$; the case of $\overline W_{i,\ell}$ is
identical up to the additional factor $\overline h^{-1}$.
By \eqref{Eq:weights_def}-\eqref{Eq:phi_w_def} and Assumption~\ref{Ass:kernels},
\begin{multline*}
|W_{i,\ell}|
\le
\frac{1}{n_{\cT_1} h^d \overline h}\,
\|\widehat M_{\ell}^{-1}\|_{\op}\,
\Big\|\psi\Big(\frac{\wsource(X_i)-\wsource(x_{\ell})}{\overline h}\Big)\Big\|_2\,
\|K_x\|_{\infty}\|K_z\|_{\infty}
\\\times\mathbf 1_{\{\|X_i-x_{\ell}\|\le h,\ |\wsource(X_i)-\wsource(x_{\ell})|\le \overline h\}}.
\end{multline*}
On $\{|\wsource(X_i)-\wsource(x_{\ell})|\le \overline h\}$ we have
$\|\psi(\cdot)\|_2\le \sqrt{2}$. Moreover, by Assumption~\ref{Ass:local-design},
$\|\widehat M_{\ell}^{-1}\|_{\op}\le \lambda_0^{-1}$. Hence
\begin{equation}\label{Eq:Wi_pointwise_bound}
|W_{i,\ell}|
\le
\frac{\sqrt{2}}{n_{\cT_1} h^d \overline h}\,
\frac{\|K_x\|_{\infty}\|K_z\|_{\infty}}{\lambda_0}\,
\mathbf 1_{\{\|X_i-x_{\ell}\|\le h,\ |\wsource(X_i)-\wsource(x_{\ell})|\le \overline h\}}.
\end{equation}
Statement $(i)$ follows immediately.\\
\noindent
Summing \eqref{Eq:Wi_pointwise_bound} over $i\in\cT_1$ yields
\begin{equation*}
\sum_{i\in\cT_1} |W_{i,\ell}|
\le
\frac{\sqrt{2}}{n_{\cT_1} h^d \overline h}\,
\frac{\|K_x\|_{\infty}\|K_z\|_{\infty}}{\lambda_0}\,
N_\ell,
\end{equation*}
where
\begin{equation*}
N_\ell
:=
\sum_{i\in\cT_1}
\mathbf 1_{\{\|X_i-x_{\ell}\|\le h,\ |\wsource(X_i)-\wsource(x_{\ell})|\le \overline h\}}.
\end{equation*}
By Assumption~\ref{Ass:ESS_plugin}, on the event of that assumption,
$N_\ell\le \frc_2^{\mathrm{ess}}n_{\cT_1}h^d\overline h$, which gives
\eqref{Eq:bound_sum_W_LP}. The bound \eqref{Eq:bound_sum_Wbar_LP} follows similarly.\\
\noindent
For $(iii)$, combining \eqref{Eq:Wi_pointwise_bound} with \eqref{Eq:bound_sum_W_LP} yields
\begin{equation*}
\sum_{i\in\cT_1}|W_{i,\ell}|^2
\le
\Big(\max_{i\in\cT_1}|W_{i,\ell}|\Big)\sum_{i\in\cT_1}|W_{i,\ell}|
\le
\frac{2\frc_2^{\mathrm{ess}}\|K_x\|_{\infty}^2\|K_z\|_{\infty}^2}
{n_{\cT_1} h^d \overline h\,\lambda_0^2},
\end{equation*}
which is \eqref{Eq:bound_sum_W2_LP}. The bound \eqref{Eq:bound_sum_Wbar2_LP} is analogous.
\end{proof}

\begin{prop}[Risk bounds for $\widehat a_{\ell}$ and $\widehat b_{\ell}$]\label{Prop:risk_wsource}
Suppose Assumptions \ref{Ass:kernels}, \ref{Ass:star_local-design} and \ref{Ass:ESS_plugin}. Work conditionally on the source sample $\cD_{\cS}$, so that
$\wsource$ is deterministic. Fix $\ell\in[L^\star]$ and let $x_{\ell}$ be the
reference point of the cell $\cA_\ell^\star$.
Then, on the event in Assumption~\ref{Ass:ESS_plugin}, for all $x\in\cA_\ell^\star$
such that $|\wsource(x)-\wsource(x_{\ell})|\le \overline h$, we have
\begin{multline}\label{Eq:risk_slope_wsource}
\E\Big[\big(\widehat a_{\ell}-(g_\ell^\star)'\big(\wsource(x_{\ell})\big)\big)^2\, \big|\, \cD_{\cS}\Big]
\\\le
\frac{2\frc_2^{\mathrm{ess}}\|K_x\|_{\infty}^2\|K_z\|_{\infty}^2}{\lambda_0^2}
\Bigg[
\frac{4\frl_g^2\|\source-\wsource\|_{\infty}^{2\beta_g}}{\overline h^2}
+
4\frl_g^2 \overline h^{2(\beta_g-1)}
+
\frac{\sigma_{\cT}^2}{n_{\cT_1}h^d\overline h^3}
\Bigg].
\end{multline}
Moreover, on the same event,
\begin{multline}\label{Eq:risk_intercept_wsource}
\E\Big[\big(\widehat b_{\ell}-g_\ell^\star\big(\wsource(x_{\ell})\big)\big)^2\, \big|\, \cD_{\cS}\Big]
\\\le
\frac{2\frc_2^{\mathrm{ess}}\|K_x\|_{\infty}^2\|K_z\|_{\infty}^2}{\lambda_0^2}
\Bigg[
4\frl_g^2\Big(\|\source-\wsource\|_{\infty}^{2\beta_g}
+
\overline h^{2\beta_g}\Big)
+
\frac{\sigma_{\cT}^2}{n_{\cT_1}h^d\overline h}
\Bigg].
\end{multline}
\end{prop}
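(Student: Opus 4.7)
The overall strategy is to exploit the closed-form weighted-sum representations
$\widehat b_{\ell}=\sum_{i\in\cT_1} W_{i,\ell} Y_i$ and
$\widehat a_{\ell}=\sum_{i\in\cT_1} \overline W_{i,\ell} Y_i$
together with the polynomial reproduction identities of
Lemma~\ref{Lem:regression_polynomial}, in order to split the error of each
estimator into three interpretable pieces: a Taylor remainder, a source
plug-in error, and a variance term driven by the noise.

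First, I would linearize $g_\ell^\star$ at the anchor $\wsource(x_{\ell^\star})$ by
setting
$p(u):=g_\ell^\star(\wsource(x_{\ell^\star}))+(g_\ell^\star)'(\wsource(x_{\ell^\star}))(u-\wsource(x_{\ell^\star}))$
and $R_\ell(u):=g_\ell^\star(u)-p(u)$, so that Assumption~\ref{Ass:regularity_transfer}
yields $|R_\ell(u)|\le \frl_g|u-\wsource(x_{\ell^\star})|^{\beta_g}$.
Using the true model $Y_i=g_\ell^\star(\source(X_i))+\xi_i$ on $\cA_\ell^\star$ and
decomposing
$g_\ell^\star(\source(X_i))=p(\wsource(X_i))
+[g_\ell^\star(\source(X_i))-g_\ell^\star(\wsource(X_i))]+R_\ell(\wsource(X_i))$,
the reproduction identities
$\sum_i W_{i,\ell}p(\wsource(X_i))=g_\ell^\star(\wsource(x_{\ell^\star}))$ and
$\sum_i \overline W_{i,\ell}p(\wsource(X_i))=(g_\ell^\star)'(\wsource(x_{\ell^\star}))$
cancel the leading-order contributions and leave the clean decompositions
\begin{equation*}
\widehat b_{\ell}-g_\ell^\star(\wsource(x_{\ell^\star}))
=A_\ell^{W}+B_\ell^{W}+C_\ell^{W},
\qquad
\widehat a_{\ell}-(g_\ell^\star)'(\wsource(x_{\ell^\star}))
=A_\ell^{\overline W}+B_\ell^{\overline W}+C_\ell^{\overline W},
\end{equation*}
where $A_\ell^\bullet$ denotes the plug-in error, $B_\ell^\bullet$ the Taylor
remainder, and $C_\ell^\bullet$ the noise contribution, each weighted
respectively by $W_{i,\ell}$ or $\overline W_{i,\ell}$.

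The three terms would then be controlled separately. For the plug-in term, the
$\beta_g$-Hölder bound
$|g_\ell^\star(\source(X_i))-g_\ell^\star(\wsource(X_i))|\lesssim
\frl_g\|\source-\wsource\|_\infty^{\beta_g}$, combined with the $\ell^1$ weight bound
\eqref{Eq:bound_sum_W_LP} (resp.\ \eqref{Eq:bound_sum_Wbar_LP} for the slope),
yields the $\|\source-\wsource\|_\infty^{2\beta_g}$ contribution, with the extra
$1/\overline h^2$ factor in \eqref{Eq:risk_slope_wsource} coming from the
$1/\overline h$ inflation in the $\overline W_{i,\ell}$ bound. For the Taylor remainder,
the support of $K_{z,\overline h}$ enforces $|\wsource(X_i)-\wsource(x_{\ell^\star})|\le
\overline h$, whence $|R_\ell(\wsource(X_i))|\le \frl_g\overline h^{\beta_g}$; this yields
$\overline h^{2\beta_g}$ for the intercept estimator and, after accounting for the extra
derivative picked up by the slope weights, $\overline h^{2(\beta_g-1)}$ for the slope.
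For the variance term, conditioning on the design and on $\cD_{\cS}$ makes the
$(\xi_i)_{i\in\cT_1}$ independent, centered and sub-exponential with $\E[\xi_i^2| X_i]
\lesssim \sigma_{\cT}^2$ by Assumption~\ref{Ass:noise}, so
$\E[(C_\ell^W)^2\mid\cD_{\cS},(X_i)_i]\le\sigma_{\cT}^2\sum_i W_{i,\ell}^2$ and the
$\ell^2$ weight bounds \eqref{Eq:bound_sum_W2_LP}--\eqref{Eq:bound_sum_Wbar2_LP}
yield the variances $\sigma_{\cT}^2/(n_{\cT_1}h^d\overline h)$ and
$\sigma_{\cT}^2/(n_{\cT_1}h^d\overline h^3)$ respectively. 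The final assembly applies
an inequality of the form $(a+b+c)^2\le\text{const}\cdot(a^2+b^2+c^2)$, takes
expectations, and matches the bounds with
\eqref{Eq:risk_intercept_wsource}--\eqref{Eq:risk_slope_wsource}.

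The main obstacle will lie in the careful handling of the plug-in contribution:
the linearization must be anchored at $\wsource(x_{\ell^\star})$ rather than at
$\source(x_{\ell^\star})$, so that the kernel support condition on $K_{z,\overline h}$
translates directly into a bound on the remainder $R_\ell$ at $\wsource(X_i)$,
while simultaneously extracting a $\|\source-\wsource\|_\infty^{\beta_g}$
dependence from the comparison $g_\ell^\star\circ\source$ versus
$g_\ell^\star\circ\wsource$. The bookkeeping of the extra $1/\overline h$ factors
carried by the slope weights $\overline W_{i,\ell}$ must also be tracked consistently
through Lemma~\ref{Lem:weights_bounds_LP}, as these determine the precise
inflation of plug-in, bias and variance contributions in the slope bound
compared with the intercept bound.
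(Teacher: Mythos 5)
Your proposal is correct and follows essentially the same route as the paper: linearize $g_\ell^\star$ at $\wsource(x_{\ell^\star})$, use the polynomial reproduction identities of Lemma~\ref{Lem:regression_polynomial} to reduce the error to a plug-in term, a Taylor-remainder term, and a noise term, and control these via the $\ell^1$/$\ell^2$ weight bounds of Lemma~\ref{Lem:weights_bounds_LP}. The only cosmetic difference is that the paper first performs the exact conditional bias--variance split (so the noise cross-terms vanish) before applying $(a+b)^2\le 2a^2+2b^2$ to the two bias pieces, which is what produces the precise constants in \eqref{Eq:risk_slope_wsource}--\eqref{Eq:risk_intercept_wsource}; your blanket $(a+b+c)^2\lesssim a^2+b^2+c^2$ step would only inflate these constants, not change the rates.
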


\begin{proof}
We prove \eqref{Eq:risk_slope_wsource}; the bound for $\widehat b_\ell$ follows by the
same argument and is omitted.

\medskip
\noindent
Fix $\ell\in[L^\star]$ and abbreviate $x_\ell:=x_{\ell}$ and
$y_\ell:=\wsource(x_\ell)$. Let $x\in\cA_\ell^\star$ satisfy
$|\wsource(x)-y_\ell|\le \overline h$.
Recall that $\widehat a_\ell=\sum_{i\in\cT_1}\overline W_{i,\ell}\,Y_i$,
where the weights $(\overline W_{i,\ell})_{i\in\cT_1}$ are defined in
\eqref{Eq:weights_def} with $z=\wsource$. Decompose
\begin{equation}\label{Eq:var_bias_decomp_slope}
\E\Big[\big(\widehat a_{\ell}-(g_\ell^\star)'(y_\ell)\big)^2\,\big|\, \cD_{\cS}\Big]
=
\Var\big(\widehat a_\ell| \cD_{\cS}\big)
+
\Big(\E[\widehat a_\ell| \cD_{\cS}]-(g_\ell^\star)'(y_\ell)\Big)^2.
\end{equation}
\underline{\textbf{Bias}}
Conditionally on $\cD_{\cS}$ and the target design $(X_i)_{i\in\cT_1}$, we have
$\E[Y_i| X_i,\cD_{\cS}]=g_\ell^\star(\source(X_i))$ on $\cA_\ell^\star$, hence
\begin{equation*}
\E[\widehat a_\ell\,|\,(X_i)_{i\in\cT_1},\cD_{\cS}]
=
\sum_{i\in\cT_1}\overline W_{i,\ell}g_\ell^\star(\source(X_i)).
\end{equation*}
Therefore,
\begin{align*}
\E[\widehat a_\ell| \cD_{\cS}]-(g_\ell^\star)'(y_\ell)
&=
\E\Bigg[
\sum_{i\in\cT_1}\overline W_{i,\ell}\,
\Big(g_\ell^\star(\source(X_i))-g_\ell^\star(\wsource(X_i))\Big)
\, \big|\, \cD_{\cS}\Bigg]\\
&\quad+
\E\Bigg[
\sum_{i\in\cT_1}\overline W_{i,\ell}\,
g_\ell^\star(\wsource(X_i))
\, \big|\, \cD_{\cS}\Bigg]
-(g_\ell^\star)'(y_\ell)\\
&=:\mathrm{bias}_1+\mathrm{bias}_2.
\end{align*}
Since
$g_\ell^\star\in\mathrm{H\text{\"o}l}(\beta_g,\frl_g)$ and $\|\source-\wsource\|_\infty<\infty$,
\begin{equation*}
|\mathrm{bias}_1|
\le
\frl_g\,\|\source-\wsource\|_\infty^{\beta_g}\,
\E\Bigg[\sum_{i\in\cT_1}|\overline W_{i,\ell}|\, \big|\, \cD_{\cS}\Bigg].
\end{equation*}
On the event in Assumption~\ref{Ass:ESS_plugin}, Lemma~\ref{Lem:weights_bounds_LP}
 gives
\begin{equation}\label{Eq:bias1_bound_wsource}
|\mathrm{bias}_1|
\le
\frac{\sqrt{2}\frc_2^{\mathrm{ess}}\frl_g\|K_x\|_\infty\|K_z\|_\infty}{\lambda_0\,\overline h}\,
\|\source-\wsource\|_\infty^{\beta_g}.
\end{equation}
\noindent
To control $\mathrm{bias}_2$, define the linear polynomial
\begin{equation*}
p(u)=g_\ell^\star(y_\ell)+(g_\ell^\star)'(y_\ell)\,(u-y_\ell).
\end{equation*}
Write $g_\ell^\star(u)=p(u)+R_\ell(u)$, where by \eqref{Eq:R_star_bound},
$|R_\ell(u)|\le \frl_g|u-y_\ell|^{\beta_g}$. By Lemma~\ref{Lem:regression_polynomial}
, we have
\begin{equation*}
\sum_{i\in\cT_1}\overline W_{i,\ell}\,p(\wsource(X_i))=(g_\ell^\star)'(y_\ell).
\end{equation*}
Hence,
\begin{equation*}
\mathrm{bias}_2
=
\E\Bigg[
\sum_{i\in\cT_1}\overline W_{i,\ell}\,R_\ell(\wsource(X_i))
\, \big|\, \cD_{\cS}\Bigg].
\end{equation*}
On the support of $\overline W_{i,\ell}$ we have
$|\wsource(X_i)-y_\ell|\le \overline h$, so $|R_\ell(\wsource(X_i))|\le \frl_g\overline h^{\beta_g}$.
Using again Lemma~\ref{Lem:weights_bounds_LP} on the $\cE_{\mathrm{ess}}$ (defined by \eqref{Eq:event_ESS}),
\begin{equation}\label{Eq:bias2_bound_wsource}
|\mathrm{bias}_2|
\le
\frl_g\,\overline h^{\beta_g}\,
\sum_{i\in\cT_1}|\overline W_{i,\ell}|
\le
\frac{\sqrt{2}\frc_2^{\mathrm{ess}}\frl_g\|K_x\|_\infty\|K_z\|_\infty}{\lambda_0}\,
\overline h^{\beta_g-1}.
\end{equation}
Combining \eqref{Eq:bias1_bound_wsource}-\eqref{Eq:bias2_bound_wsource} and using
$(a+b)^2\le 2a^2+2b^2$ yields, on the event $\cE_{\mathrm{ess}}$ (defined by \eqref{Eq:event_ESS}),
\begin{equation}\label{Eq:bias_sq_bound_wsource}
\Big(\E[\widehat a_\ell| \cD_{\cS}]-(g_\ell^\star)'(y_\ell)\Big)^2
\le
\frac{4\frc_2^{\mathrm{ess}}\|K_x\|_\infty^2\|K_z\|_\infty^2}{\lambda_0^2}
\Bigg[
\frac{4\frl_g^2\|\source-\wsource\|_{\infty}^{2\beta_g}}{\overline h^2}
+
4\frl_g^2\overline h^{2(\beta_g-1)}
\Bigg].
\end{equation}

\medskip
\noindent
\underline{\textbf{Variance}}
Conditionally on $(X_i)_{i\in\cT_1}$ and $\cD_{\cS}$, the weights
$(\overline W_{i,\ell})_{i\in\cT_1}$ are deterministic and
\[
\widehat a_\ell-\E[\widehat a_\ell| (X_i)_{i\in\cT_1},\cD_{\cS}]
=
\sum_{i\in\cT_1}\overline W_{i,\ell}\,\varepsilon_i.
\]
By Assumption~\ref{Ass:noise}, $\|\varepsilon_i\|_{\psi_1}\le \sigma_{\cT}$, hence
$\E[\varepsilon_i^2]\lesssim \sigma_{\cT}^2$ uniformly in $i$. Therefore,
\begin{equation*}
\Var(\widehat a_\ell| (X_i)_{i\in\cT_1},\cD_{\cS})
\le
\Big(\sup_{i\in\cT_1}\E[\varepsilon_i^2]\Big)\sum_{i\in\cT_1}\overline W_{i,\ell}^2
\lesssim
\sigma_{\cT}^2\sum_{i\in\cT_1}\overline W_{i,\ell}^2.
\end{equation*}
Taking expectation over the target design and using Lemma~\ref{Lem:weights_bounds_LP}
on the event $\cE_{\mathrm{ess}}$ (defined by \eqref{Eq:event_ESS}) gives
\begin{equation}\label{Eq:var_bound_wsource}
\Var(\widehat a_\ell| \cD_{\cS})
\le
\frac{2\frc_2^{\mathrm{ess}}\sigma_{\cT}^2
\|K_x\|_\infty^2\|K_z\|_\infty^2}{n_{\cT_1}h^d\overline h^3\,\lambda_0^2}.
\end{equation}

\medskip
\noindent
\underline{\textbf{Conclusion}}
Plugging \eqref{Eq:bias_sq_bound_wsource} and \eqref{Eq:var_bound_wsource}
into \eqref{Eq:var_bias_decomp_slope} yields \eqref{Eq:risk_slope_wsource}.
\end{proof}

\subsection{Proof of Theorem 6}\label{proof:thm_global_g_risk}%\ref{Thm:global_g_risk_fixed_y}

\textbf{Again, we work conditionally on the source sample $\cD_{\cS}$, so that $\wsource$ is fixed.}
Let $\ell=\ell^\star(X)$ and write $y_\ell:=\wsource(x_{\ell})$, so that
$g(X,y)=g_\ell^\star(y)$ and $\widehat g_{\wsource}(X,y)=\widehat g_{\wsource,\ell}(y)$.
Recall that
\begin{equation*}
\widehat g_{\wsource,\ell}(y)
=
\widehat a_\ell\,(y-y_\ell)+\widehat b_\ell
\quad\text{with}\quad
a_\ell^\star=(g_\ell^\star)'(y_\ell)
\;\text{and}\;
b_\ell^\star=g_\ell^\star(y_\ell).
\end{equation*}
On the event $\cE_y$ we have $|y-y_\ell|\le \overline h$. Then,
\begin{align}
\E\Big[\big(\widehat g_{\wsource,\ell}(y)-g_\ell^\star(y)\big)^2\, \big|\, \cD_{\cS}\Big]
&\le
3\,\overline h^2\,\E\Big[\big(\widehat a_\ell-a_\ell^\star\big)^2\, \big|\, \cD_{\cS}\Big]
+
3\,\E\Big[\big(\widehat b_\ell-b_\ell^\star\big)^2\, \big|\, \cD_{\cS}\Big]
\nonumber\\
&\quad+
3\,\Big(g_\ell^\star(y)-\big(a_\ell^\star(y-y_\ell)+b_\ell^\star\big)\Big)^2.
\label{Eq:basic_decomp_g}
\end{align}
By the Taylor remainder bound \eqref{Eq:R_star_bound} (with expansion point $y_\ell$), on $\cE_y$,
\begin{equation}\label{Eq:taylor_bias_term}
\Big(g_\ell^\star(y)-\big(a_\ell^\star(y-y_\ell)+b_\ell^\star\big)\Big)^2
\le
\frl_g^2|y-y_\ell|^{2\beta_g}
\le
\frl_g^2\overline h^{2\beta_g}.
\end{equation}
\noindent
On the event $\cE_{\mathrm{ess}}$ \eqref{Eq:event_ESS}, Proposition~\ref{Prop:risk_wsource} yields
\begin{equation}\label{Eq:slope_risk_bound_used}
\E\Big[\big(\widehat a_\ell-a_\ell^\star\big)^2\, \big|\, \cD_{\cS}\Big]
\lesssim
\frac{\frl_g^2\|\source-\wsource\|_\infty^{2\beta_g}}{\overline h^2}
+
\frl_g^2\,\overline h^{2(\beta_g-1)}
+
\frac{\sigma_{\cT}^2}{n_{\cT_1}h^d\overline h^3},
\end{equation}
and
\begin{equation}\label{Eq:intercept_risk_bound_used}
\E\Big[\big(\widehat b_\ell-b_\ell^\star\big)^2\, \big|\, \cD_{\cS}\Big]
\lesssim
\frl_g^2\Big(\|\source-\wsource\|_\infty^{2\beta_g}+\overline h^{2\beta_g}\Big)
+
\frac{\sigma_{\cT}^2}{n_{\cT_1}h^d\overline h}.
\end{equation}
Substituting \eqref{Eq:slope_risk_bound_used}-\eqref{Eq:intercept_risk_bound_used} and
\eqref{Eq:taylor_bias_term} into \eqref{Eq:basic_decomp_g}, and simplifying, gives
\begin{equation*}
\E\Big[\big(\widehat g_{\wsource,\ell}(y)-g_\ell^\star(y)\big)^2\, \big|\, \cD_{\cS}\Big]
\lesssim
\frl_g^2\|\source-\wsource\|_\infty^{2\beta_g}
+
\frl_g^2\,\overline h^{2\beta_g}
+
\frac{\sigma_{\cT}^2}{n_{\cT_1}h^d\overline h},
\end{equation*}
on $\cE_{\mathrm{ess}}\cap \cE_y$. Finally, on $\cE_{\cS}$ we have the sup-norm control
\eqref{Eq:epsS_rate} (see Appendix \ref{App:NW} for details), hence
\begin{equation*}
\|\source-\wsource\|_\infty^{2\beta_g}
\le
\frc\Bigg(
\frac{\log(c_{\cS}/\delta_{\cS})}{n_{\cS}}
\Bigg)^{\frac{2\beta_g\beta_{\cS}}{2\beta_{\cS}+d}},
\end{equation*}
which yields \begin{equation}%\label{Eq:global_g_risk_fixed_y_bound}
\E\Big[\big(\widehat g_{\wsource}(X,y)-g(X,y)\big)^2\, \big|\, \cD_{\cS}\Big]
\lesssim
\frl_g^2
\Bigg(
\frac{\log(c_{\cS}/\delta_{\cS})}{n_{\cS}}
\Bigg)^{\frac{2\beta_g\beta_{\cS}}{2\beta_{\cS}+d}}
+
\frl_g^2\,\overline h^{2\beta_g}
+
\frac{\sigma_{\cT}^2}{n_{\cT_1}h^d\overline h}.
\end{equation}
The optimized choice $\oh = (n_{\cT_1}h^d)^{-1/(2\beta_g+1)}$ balances
$\overline h^{2\beta_g}$ and $(n_{\cT_1}h^d\overline h)^{-1}$ and gives
\begin{equation}%\label{Eq:global_g_risk_fixed_y_rate}
\E\Big[\big(\widehat g_{\wsource}(X,y)-g(X,y)\big)^2\,\big|\, \cD_{\cS}\Big]
\lesssim
\Bigg(
\frac{\log(c_{\cS}/\delta_{\cS})}{n_{\cS}}
\Bigg)^{\frac{2\beta_g\beta_{\cS}}{2\beta_{\cS}+d}}
+
(n_{\cT_1}h^d)^{-\frac{2\beta_g}{2\beta_g+1}}.
\end{equation}

\section{Technical results}\label{app:technical_results}

\subsection{Nadarya-Watson estimator}\label{App:NW}
We briefly recall classical consistency and deviation results for the
Nadaraya-Watson (NW) estimator, which will be used to control the estimation
error of the source regression function.
\noindent
\textbf{\underline{Setting}}
Let $(X_i,Y_i)_{i\in\cS}$ be i.i.d.\ observations from the regression model
\begin{equation*}
Y_i = \source(X_i) + \varepsilon_i,
\qquad
\E[\varepsilon_i | X_i]=0,
\end{equation*}
where $X_i\in[0,1]^d$ has density $p_\cS$ satisfying Assumption \ref{Ass:design_source}.
We assume that the noise variables $(\varepsilon_i)_{i\in\cS}$ are independent and
sub-exponential, i.e., there exist constants $(\sigma_{\cS},b)>0$ such that
\begin{equation*}
\E\left[\exp(\lambda \varepsilon_i)|X_i\right]
\le
\exp\left(\frac{\sigma_{\cS}^2\lambda^2}{2}\right),
\end{equation*}
for all $|\lambda|<1/b$.
Let $K:\R_+\to\R_+$ be a bounded, Lipschitz kernel, supported on $[0,1]$ and satisfying
$\int K = 1$, and let $h_{\cS}>0$ denote a bandwidth.
The Nadaraya-Watson estimator of $\source$ is defined as
\begin{equation*}
\wsource(x)
=
\frac{\sum_{i\in\cS} K_{h_{\cS}}(\|x-X_i\|) Y_i}
{\sum_{i\in\cS} K_{h_{\cS}}(\|x-X_i\|)},
\qquad
K_{h_{\cS}}(u) := h_{\cS}^{-d} K(u/h_{\cS}).
\end{equation*}
\noindent
\textbf{\underline{Smoothness assumption}}
We assume throughout that the source regression function belongs to a
H\"older class $\mathrm{Höl}(\beta_{\cS},\frl_{\cS}),$ for some smoothness parameter $\beta_{\cS}>0$ and radius $\frl_{\cS}>0$.\\

\noindent
\textbf{\underline{Uniform risk bounds}}
Under the above assumptions, the bias-variance trade-off of the NW estimator
is well understood.
In particular, classical results due to Stone~\cite{stone1982optimal}, and later
refinements by Gy\"orfi et al.~\cite{gyorfi2002distribution} and
Tsybakov~\cite{tsybakov2008nonparametric}, yield the following bound
\begin{equation}\label{eq:MSE_NW}
\E\bigg[\int(\wsource(x)-\source(x))^2dx\bigg]\lesssim h_{\cS}^{2\beta_{\cS}}
+
\frac{1}{n_{\cS} h_{\cS}^d}.
\end{equation}
Assume additionally that Assumption \eqref{Ass:design_source} holds, we get the
uniform mean-squared error
bound: there exist constants $\mathfrak c_1,\mathfrak c_2>0$ such that
\begin{equation*}
\sup_{x\in[0,1]^d}
\E\left[|\wsource(x)-\source(x)|^2\right]
\le
\mathfrak c_1 h_{\cS}^{2\beta_{\cS}}
+
\frac{\mathfrak c_2}{n_{\cS} h_{\cS}^d}.
\end{equation*}

\paragraph{Uniform deviation bounds under sub-exponential noise.}
High-probability sup-norm bounds can still be obtained when the noise is
sub-exponential, at the price of larger constants.
Using Bernstein-type inequalities for kernel regression with unbounded noise
(see, e.g., Giné and Guillou~\cite{gine2002rates}, Einmahl and Mason~\cite{einmahl2005uniform},
and the discussion in Tsybakov~\cite{tsybakov2008nonparametric}),
there exist constants $c_{\cS},\frc>0$ such that, for any
$\delta_{\cS}\in(0,1)$, with probability at least $1-\delta_{\cS}$,
\begin{equation*}
\|\wsource-\source\|_\infty
\le
\frc
\Bigg(
h_{\cS}^{\beta_{\cS}}
+
\sqrt{\frac{\log(c_{\cS}/\delta_{\cS})}{n_{\cS} h_{\cS}^d}}
\Bigg).
\end{equation*}
\noindent
Choosing the bandwidth optimally as
\begin{equation*}
h_{\cS}
=\frc
\Bigg(
\frac{\log(c_{\cS}/\delta_{\cS})}{n_{\cS}}
\Bigg)^{\frac{1}{2\beta_{\cS}+d}},
\end{equation*}
and assuming that
$n_{\cS} h_{\cS}^d / \log n_{\cS} \to \infty$,
we obtain the uniform rate
\begin{equation*}
\|\wsource-\source\|_\infty
\le
\frc
\Bigg(
\frac{\log(c_{\cS}/\delta_{\cS})}{n_{\cS}}
\Bigg)^{\frac{\beta_{\cS}}{2\beta_{\cS}+d}}.
\end{equation*}
\noindent
Finally, this bound directly implies a control on higher-order powers of the
sup-norm error.
In particular, for any $\beta_g>0$, with the same probability,
\begin{equation*}
\|\source-\wsource\|_\infty^{2\beta_g}
\le
\frc
\Bigg(
\frac{\log(c_{\cS}/\delta_{\cS})}{n_{\cS}}
\Bigg)^{\frac{2\beta_g\beta_{\cS}}{2\beta_{\cS}+d}}.
\end{equation*}
\noindent
Such bounds will play a key role in controlling the additional error induced
by the estimation of the source regression function when composed with a
$\beta_g$-H\"older transfer function.

\subsection{Deviation and concentration inequalities}\label{App:concentration}

\begin{thm}[Chernoff's bound for Bernoulli random variables]
Let $(Z_i)_{i\in[n]}$ be i.i.d. Bernoulli random variables with parameter $p\in(0,1)$. For any $\rho\in(0,1)$,
\begin{equation*}
\P\left(
\sum_{i=1}^nZ_i \le \rho np
\right)
\le
\exp\left(
- \frac{(1-\rho)^2}{2}\, np
\right).
\end{equation*}
\end{thm}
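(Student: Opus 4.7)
The plan is to apply the classical Cramér--Chernoff exponential Markov method, then optimize and simplify via an elementary convexity inequality. Writing $S_n = \sum_{i=1}^n Z_i$, for any $\lambda > 0$ I would first note that
\begin{equation*}
\P(S_n \le \rho n p)
= \P\bigl(e^{-\lambda S_n} \ge e^{-\lambda \rho n p}\bigr)
\le e^{\lambda \rho n p}\,\E\bigl[e^{-\lambda S_n}\bigr],
\end{equation*}
by Markov's inequality. Independence of the $Z_i$ and the explicit Bernoulli MGF give $\E[e^{-\lambda S_n}] = (1 - p(1 - e^{-\lambda}))^n$, which, using $1 + u \le e^u$ with $u = -p(1-e^{-\lambda})$, is bounded by $\exp(-np(1 - e^{-\lambda}))$. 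Combining,
\begin{equation*}
\P(S_n \le \rho n p)
\le
\exp\bigl(\lambda \rho n p - np(1 - e^{-\lambda})\bigr).
\end{equation*}

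Next I would optimize the right-hand side in $\lambda > 0$. Setting the derivative of $\lambda\rho - (1 - e^{-\lambda})$ to zero yields $e^{-\lambda} = \rho$, i.e.\ $\lambda^\star = -\log\rho$, which is positive because $\rho \in (0,1)$. Substituting gives the Kullback--Leibler form
\begin{equation*}
\P(S_n \le \rho n p)
\le
\exp\bigl(-np\,\psi(\rho)\bigr),
\qquad
\psi(\rho) := \rho\log\rho - \rho + 1.
\end{equation*}

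To conclude, it remains to weaken $\psi$ to the Gaussian-type quadratic $(1-\rho)^2/2$ appearing in the statement, by showing
\begin{equation*}
\psi(\rho) \ge \tfrac{1}{2}(1-\rho)^2 \qquad \text{for all } \rho \in (0,1).
\end{equation*}
I would prove this with a short convexity argument: set $f(\rho) := \psi(\rho) - (1-\rho)^2/2$ and observe $f(1)=0$, $f'(\rho) = \log\rho + 1 - \rho$ with $f'(1)=0$, and $f''(\rho) = 1/\rho - 1 \ge 0$ on $(0,1)$. Hence $f'$ is nondecreasing with $f'(1)=0$, so $f' \le 0$ on $(0,1)$, and thus $f$ is nonincreasing with $f(1)=0$, which yields $f \ge 0$ on $(0,1)$. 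Plugging this into the KL bound completes the proof.

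I do not anticipate any real obstacle: every step is standard, and the main \emph{content} is the exponential Markov tilt together with the elementary inequality $\psi(\rho) \ge (1-\rho)^2/2$. The only small point worth stating explicitly is the choice of optimal tilt $\lambda^\star = -\log\rho$ and the verification that it is admissible ($\lambda^\star > 0$), which is ensured by the hypothesis $\rho \in (0,1)$.
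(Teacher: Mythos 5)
Your proof is correct: the exponential Markov bound, the Bernoulli MGF with $1+u\le e^u$, the optimal tilt $\lambda^\star=\log(1/\rho)>0$, and the elementary inequality $\rho\log\rho-\rho+1\ge\tfrac12(1-\rho)^2$ (which your convexity argument establishes cleanly) combine to give exactly the stated bound. The paper recalls this Chernoff bound in its technical appendix as a standard fact without proof, so there is no in-paper argument to compare against; your Cramér--Chernoff derivation is the canonical route and fully proves the statement.
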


\begin{thm}[Bernstein inequality, bounded summands]\label{thm:bernstein-bounded}
Let $(Z_i)_{i\in[n]}$ be independent, centered random variables with
$\lvert Z_i\rvert \le M$ almost surely, and set $v:=\sum_{i=1}^n \Var(Z_i)$.
Then, for any $t>0$,
\begin{equation*}
\P\left(\bigg|\sum_{i=1}^n Z_i\bigg| > t\right)\le 2 \exp\left(
 - \frac{t^2}{\,2\bigl(v + \tfrac{M}{3}\,t\bigr)}
\right)=2 \exp\!\left(
 - \frac{t^2}{\,2v + \tfrac{2}{3} M t}
\right).
\end{equation*}
Equivalently, for any $\delta\in(0,1)$, with probability at least $1-\delta$,
\begin{equation*}
\left|\sum_{i=1}^n Z_i\right|\le
\sqrt{2 v\log\Big(\frac{2}{\delta}\Big)}+
\frac{M}{3}\log\Big(\frac{2}{\delta}\Big).
\end{equation*}
\end{thm}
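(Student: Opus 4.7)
The plan is to use the classical Cram\'er--Chernoff method: bound the moment generating function (MGF) of each centered summand, tensorize by independence, optimize the exponential parameter to obtain the exponential tail, and finally invert this tail into the deviation form.

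First I would establish the single-variable MGF bound. For a centered $Z$ with $|Z|\le M$ and $\sigma^2 = \Var(Z)$, writing $e^{\lambda Z} - 1 - \lambda Z = \sum_{k\ge 2} \lambda^k Z^k /k!$ and taking expectations (using $\mathbb{E}[Z]=0$), the bound $|Z|^k \le M^{k-2} Z^2$ together with the combinatorial inequality $k! \ge 2 \cdot 3^{k-2}$ for $k\ge 2$ gives
\begin{equation*}
\mathbb{E}[e^{\lambda Z}] \le 1 + \frac{\lambda^2 \sigma^2/2}{1-\lambda M/3} \le \exp\!\left(\frac{\lambda^2 \sigma^2/2}{1-\lambda M/3}\right), \qquad 0 < \lambda < 3/M.
\end{equation*}
The combinatorial inequality is precisely what produces the $M/3$ constant appearing throughout the statement.

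Next, by independence the MGFs multiply, and Markov's inequality applied to $\exp(\lambda S_n)$ yields, for any $\lambda \in (0,3/M)$ and any $t>0$,
\begin{equation*}
\mathbb{P}(S_n > t) \le \exp\!\left(-\lambda t + \frac{\lambda^2 v/2}{1-\lambda M/3}\right),
\end{equation*}
where $S_n = \sum_{i=1}^n Z_i$ and $v = \sum_{i=1}^n \Var(Z_i)$. Optimizing the exponent over $\lambda$ (the minimum is attained at $\lambda^\star = t/(v+Mt/3)$, which indeed satisfies $\lambda^\star < 3/M$) produces the one-sided Bernstein tail
\begin{equation*}
\mathbb{P}(S_n > t) \le \exp\!\left(-\frac{t^2}{2(v + Mt/3)}\right).
\end{equation*}
Applying the same argument to $-Z_i$ and taking a union bound gives the two-sided form stated in the theorem.

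Finally, the deviation form is obtained by inverting the two-sided bound: setting the right-hand side equal to $\delta$ yields a quadratic in $t$ of the form $t^2 - (2M/3)x\, t - 2vx \ge 0$ with $x = \log(2/\delta)$, whose positive root is controlled via $\sqrt{a+b}\le \sqrt a + \sqrt b$ into one Gaussian-like term $\sqrt{2v\log(2/\delta)}$ and one Poisson-like term of order $M\log(2/\delta)$.

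The main obstacle is the MGF bound in the first step: getting the sharp constant $M/3$ rests on the tight tail estimate of the Taylor series through $k! \ge 2\cdot 3^{k-2}$, without which one only recovers cruder Bennett- or Hoeffding-type bounds. The tail optimization and the final inversion are routine but require careful bookkeeping of numerical constants to match the exact form displayed in the statement.
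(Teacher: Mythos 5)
The paper records this as a classical inequality without proof, so there is no internal argument to compare against; your Chernoff--Cram\'er route is the standard one, and it does establish the first display correctly. The MGF bound via $|Z|^k\le M^{k-2}Z^2$ and $k!\ge 2\cdot 3^{k-2}$, tensorization, Markov, and the substitution $\lambda=t/(v+Mt/3)$ (which is admissible since $\lambda<3/M$) give exactly the exponent $t^2/(2(v+Mt/3))$, and the union bound over $\pm\sum_i Z_i$ gives the two-sided tail. One small inaccuracy: $\lambda=t/(v+Mt/3)$ is \emph{not} the minimizer of $-\lambda t+\frac{\lambda^2 v/2}{1-\lambda M/3}$ (its derivative there equals $Mt^2/(6v)>0$); this is harmless because the Chernoff bound is valid for every admissible $\lambda$, but you should not call it the optimum.

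The genuine gap is in the last step: the deviation form with the constant $\tfrac{M}{3}\log(2/\delta)$ cannot be obtained by inverting the weakened tail $2\exp\bigl(-t^2/(2(v+Mt/3))\bigr)$. Setting this equal to $\delta$ and solving $t^2-\tfrac{2M}{3}xt-2vx=0$ with $x=\log(2/\delta)$ gives the root $\tfrac{M}{3}x+\sqrt{(Mx/3)^2+2vx}\le\sqrt{2vx}+\tfrac{2M}{3}x$, i.e.\ a linear term with constant $\tfrac{2M}{3}$, not $\tfrac{M}{3}$; indeed, plugging $t=\sqrt{2vx}+\tfrac{M}{3}x$ into the quadratic yields $t^2-\tfrac{2M}{3}xt-2vx=-\tfrac{M^2x^2}{9}<0$, so no bookkeeping of your inversion can match the stated bound. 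To prove the second display as written you must invert the Chernoff bound \emph{before} weakening the exponent: with $\log\E\exp\bigl(\lambda\sum_i Z_i\bigr)\le \frac{\lambda^2 v}{2(1-\lambda M/3)}$, take $\lambda=\frac{\sqrt{2x/v}}{1+\tfrac{M}{3}\sqrt{2x/v}}$, for which the exponent $\lambda t-\frac{\lambda^2v}{2(1-\lambda M/3)}$ evaluated at $t=\sqrt{2vx}+\tfrac{M}{3}x$ equals exactly $x$ (equivalently, use the sub-gamma calculus: the Cram\'er transform is $\tfrac{9v}{M^2}h_1\bigl(\tfrac{Mt}{3v}\bigr)$ with $h_1(u)=1+u-\sqrt{1+2u}$ and $h_1^{-1}(x)=x+\sqrt{2x}$). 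This gives $\P\bigl(\sum_i Z_i\ge\sqrt{2vx}+\tfrac{M}{3}x\bigr)\le e^{-x}$, and the stated bound follows by applying the same to $-Z_i$ and taking $x=\log(2/\delta)$.
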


\begin{thm}\label{Thm:Bernstein_matrix}[Matrix Bernstein inequality]
Let $X_1, X_2, \dots, X_n$ be independent, random, self-adjoint matrices 
of dimension $d \times d$ such that $\E[X_i]=0$ for all $i\in[n]$ and there exists $R>0$ such that $\| X_i \|_{\mathrm{op}} \le R$ almost surely. Define the \emph{matrix variance parameter}
\begin{equation*}
\sigma^2 := \left\| \sum_{i=1}^n \mathbb{E}[X_i^2] \right\|_{\mathrm{op}}.
\end{equation*}
Then, for all $t\ge0$,
\begin{equation*}
\P\left( \Big\| \sum_{i=1}^n X_i \Big\|_{\mathrm{op}} \ge t \right)\le 2d \exp\left(\frac{-t^2/2}{\sigma^2 + Rt/3}\right).
\end{equation*}
As a corollary, there exists a constant $\frc>0$ such that
\begin{equation*}
\E\bigg[\Big\| \sum_{i=1}^n X_i \Big\|_{\mathrm{op}}\bigg]
\le C \, \max\!\big\{ \sqrt{\sigma^2 \log d}, \, R \log d \big\}
\end{equation*}
\end{thm}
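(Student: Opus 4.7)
The plan is to follow the standard matrix Laplace transform approach due to Ahlswede--Winter and Tropp, which reduces the matrix tail bound to a scalar calculation after a convexity step. The starting point is the matrix Markov inequality
\begin{equation*}
\P\!\left(\lambda_{\max}\!\Big(\sum_{i=1}^n X_i\Big)\ge t\right)
\le
\inf_{\theta>0} e^{-\theta t}\,
\E\!\left[\operatorname{tr}\exp\!\Big(\theta\sum_{i=1}^n X_i\Big)\right],
\end{equation*}
which holds because $\lambda_{\max}\le \operatorname{tr}\exp$ on self-adjoint matrices (after exponentiating).

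The key nontrivial ingredient is Lieb's concavity theorem, which implies the subadditivity of matrix cumulant generating functions: for independent self-adjoint $X_i$,
\begin{equation*}
\E\!\left[\operatorname{tr}\exp\!\Big(\sum_{i=1}^n \theta X_i\Big)\right]
\le
\operatorname{tr}\exp\!\Big(\sum_{i=1}^n \log\E[e^{\theta X_i}]\Big).
\end{equation*}
Next, I would control each matrix mgf $\log\E[e^{\theta X_i}]$ using the scalar inequality $e^{x}-1-x\le \frac{x^{2}}{R^{2}}\big(e^{\theta R}-\theta R-1\big)/\theta^{2}$ applied spectrally on $[-R,R]$; since $\E[X_i]=0$, this yields
\begin{equation*}
\log\E[e^{\theta X_i}]\preceq f(\theta)\,\E[X_i^{2}],
\qquad f(\theta):=\frac{e^{\theta R}-\theta R-1}{R^{2}}.
\end{equation*}
By monotonicity of the matrix logarithm (valid here because the matrices are self-adjoint and the bound is in Loewner order), summing gives $\sum_i \log\E[e^{\theta X_i}]\preceq f(\theta)\sum_i \E[X_i^{2}]$, whose top eigenvalue is at most $f(\theta)\sigma^{2}$. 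Plugging back yields
\begin{equation*}
\P\!\left(\lambda_{\max}\!\Big(\sum_i X_i\Big)\ge t\right)
\le d\,\exp\!\bigl(f(\theta)\sigma^{2}-\theta t\bigr).
\end{equation*}

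Optimizing $\theta$ is the standard Bernstein calculation: choosing $\theta = \frac{1}{R}\log(1+Rt/\sigma^{2})$ and using $h(u)=(1+u)\log(1+u)-u\ge \frac{u^{2}/2}{1+u/3}$ gives the advertised bound $2d\exp(-t^{2}/2/(\sigma^{2}+Rt/3))$, where the factor $2$ comes from repeating the argument on $-X_i$ to pass from $\lambda_{\max}$ to the operator norm. The expectation bound follows by integrating the tail: split the integral at $t_{\star}:=\max\{\sqrt{\sigma^{2}\log d},R\log d\}$, use the Gaussian-like regime for $t\le \sigma^{2}/R$ and the sub-exponential regime otherwise, which yields the stated $\mathfrak{c}\max\{\sqrt{\sigma^{2}\log d},R\log d\}$ bound.

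The main obstacle is not the computation but the justification of the two operator-theoretic steps: (i) the passage from the scalar inequality $e^{x}\le 1+x+x^{2}f(\theta)$ to a Loewner-order bound on $\log\E[e^{\theta X_i}]$, which requires operator monotonicity of $\log$ combined with operator concavity arguments, and (ii) the application of Lieb's theorem to separate the expectation of the trace exponential into a product-like form. Once these are accepted as black boxes, the remainder is a scalar Bernstein-type optimization, and the expectation bound follows by standard tail integration.
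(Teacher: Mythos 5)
The paper does not prove this statement: it is recorded in Appendix~\ref{app:technical_results} as a standard black-box concentration result (Tropp's matrix Bernstein inequality) and is simply invoked, so there is no in-paper argument to compare against. Your proposal is the canonical Ahlswede--Winter/Tropp proof and is essentially sound: the matrix Laplace transform step, the subadditivity of the matrix cumulant generating function via Lieb's concavity theorem, the moment-generating-function bound $\log\E[e^{\theta X_i}]\preceq f(\theta)\,\E[X_i^2]$ with $f(\theta)=(e^{\theta R}-\theta R-1)/R^2$ (obtained from the scalar inequality by the spectral transfer rule, then $\E e^{\theta X_i}\preceq I+f(\theta)\E[X_i^2]$, operator monotonicity of $\log$, and $\log(I+A)\preceq A$), the monotonicity of $A\mapsto\Tr\exp(A)$ together with $\Tr e^{A}\le d\,e^{\lambda_{\max}(A)}$ to extract the factor $d$, the Bennett-to-Bernstein optimization with $h(u)\ge \tfrac{u^2/2}{1+u/3}$, and the factor $2$ from repeating the argument for $-X_i$ are exactly the standard ingredients. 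One small point on the corollary: integrating the tail $\min\{1,\,2d\exp(-t^2/2/(\sigma^2+Rt/3))\}$ actually produces a bound of order $\sqrt{\sigma^2\log(2d)}+R\log(2d)$ rather than with $\log d$; for $d\ge 2$ this is absorbed into the constant $\frc$, so your derivation matches the stated bound (the $d=1$ case is degenerate in the statement itself, not in your argument). No genuine gap.
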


\subsubsection*{Bernstein inequality for sub-exponential random variables}
Recall the definition of Orlicz norms.
\begin{df}\label{Def:Orlicz}
The Orlicz $\psi_1$-norm of a real-valued random variable $X$ is defined by:
\begin{equation*}
\|X\|_{\psi_1} = \inf \left\{ C > 0 \; : \; \mathbb{E}\left[ \exp\left( \frac{|X|}{C} \right) \right] \leq 2 \right\}.
\end{equation*}
A random variable is called \emph{sub-exponential} if $\|X\|_{\psi_1} < \infty$.\\
The Orlicz $\psi_2$-norm of a real-valued random variable $X$ is defined by:
\begin{equation*}
\|X\|_{\psi_2} = \inf \left\{ C > 0 \; : \; \mathbb{E}\left[ \exp\left( \frac{|X|^2}{C^2} \right) \right] \leq 2 \right\}.
\end{equation*}
A random variable is called \emph{sub-Gaussian} if $\|X\|_{\psi_2} < \infty$.
\end{df}
\noindent
The following examples can be found in \cite{vershynin2018high}, Section 2.5.
\begin{ex}\label{ex:sub-gaussian}
 \begin{enumerate}
 \item Any random variable $X\sim\cN(0,\sigma^2)$ is sub-gaussian with $\|X\|_{\psi_2}\leq C\sigma$, for some $C>0$.
 \item Any bounded random variable $X$ is sub-Gaussian with $\|X\|_{\psi_2}\leq C\|X\|_\infty$, for some $C>0$.
 \end{enumerate}   
\end{ex}
\noindent
Let us prove the useful lemma:
\begin{lem}\label{Lem:subexp-square}
Let $X$ be a sub-Gaussian random variable such that $\|X\|_{\psi_2}<\sigma$. Then, the random variable $X^2-\E[X^2]$ is sub-exponential, and
\begin{equation*}
\|X^2-\E[X^2]\|_{\psi_1}\le 4\sigma^2.
\end{equation*}
\end{lem}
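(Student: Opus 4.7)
The plan is to reduce the bound to two classical identities relating the $\psi_1$ and $\psi_2$ norms, and then to apply the triangle inequality for Orlicz norms. Concretely, I would argue in three short steps.

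First, I would show that $\|X^2\|_{\psi_1} = \|X\|_{\psi_2}^2$ directly from the definitions. Indeed, by Definition~\ref{Def:Orlicz},
\begin{equation*}
\|X^2\|_{\psi_1}
= \inf\{C>0 : \E[\exp(X^2/C)]\le 2\}
= \inf\{C'^2>0 : \E[\exp(X^2/C'^2)]\le 2\}
= \|X\|_{\psi_2}^2,
\end{equation*}
so under the hypothesis $\|X\|_{\psi_2}<\sigma$ we obtain $\|X^2\|_{\psi_1}\le\sigma^2$.

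Next, I would control $\E[X^2]$ via Jensen's inequality: from $\E[\exp(X^2/\sigma^2)]\le 2$ and the convexity of $\exp$, $\exp(\E[X^2]/\sigma^2)\le 2$, so $\E[X^2]\le \sigma^2\log 2$. Since any constant $c\in\R$ satisfies $\|c\|_{\psi_1} = |c|/\log 2$, this yields $\|\E[X^2]\|_{\psi_1}\le \sigma^2$.

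Finally, applying the triangle inequality for the $\psi_1$ Orlicz norm to the decomposition $X^2-\E[X^2] = X^2 + (-\E[X^2])$ gives
\begin{equation*}
\|X^2-\E[X^2]\|_{\psi_1}
\le \|X^2\|_{\psi_1} + \|\E[X^2]\|_{\psi_1}
\le \sigma^2 + \sigma^2 = 2\sigma^2 \le 4\sigma^2,
\end{equation*}
which is the claimed bound (in fact with a better constant). There is essentially no obstacle here; the only delicate point is the bookkeeping of constants, and the weakening to $4\sigma^2$ in the statement leaves ample slack. Note that the bound in particular implies sub-exponentiality of $X^2-\E[X^2]$.
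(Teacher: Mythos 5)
Your proof is correct, and it takes a genuinely different route from the paper's. The paper verifies the defining condition directly: it first deduces $\E[X^2]\le\sigma^2$ from $e^u\ge 1+u$, then uses Jensen in the form $\E[Z^{1/4}]\le\E[Z]^{1/4}$ to get $\E[\exp(X^2/(4\sigma^2))]\le 2^{1/4}$, and finally bounds $\E[\exp(|X^2-\E[X^2]|/(4\sigma^2))]\le e^{1/4}\,2^{1/4}\le 2$, which is exactly the $\psi_1$-condition at level $4\sigma^2$. You instead exploit the structure of the Orlicz norm itself: the exact identity $\|X^2\|_{\psi_1}=\|X\|_{\psi_2}^2$ (a direct substitution in the definition), the exact value $\|c\|_{\psi_1}=|c|/\log 2$ for constants combined with the sharper Jensen bound $\E[X^2]\le\sigma^2\log 2$, and then the triangle inequality for $\|\cdot\|_{\psi_1}$. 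This buys a better constant ($2\sigma^2$ instead of $4\sigma^2$) and is arguably cleaner, but it leans on the fact that the quantity in Definition~\ref{Def:Orlicz} is a genuine norm (it is the Luxemburg norm associated with the convex function $u\mapsto e^u-1$, so subadditivity holds); the paper's argument is entirely self-contained and never needs this. Both proofs are valid, and since the stated bound only claims $4\sigma^2$, your slack in the final step is harmless.
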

\begin{proof}
Assume that $\|X\|_{\psi_2}<\sigma$, i.e. $\E[\exp(X^2/\sigma^2)]\le 2$. Using the standard inequality $e^{u}\ge 1+u$, we get
\begin{equation*}
2\ge \E\bigg[\exp\Big(\frac{X^2}{\sigma^2}\Big)\bigg]\ge 1+\frac{\E[X^2]}{\sigma^2},
\end{equation*}
so that $\E[X^2]\le \sigma^2$. Moreover, by Jensen's inequality,
\begin{equation*}
\E\bigg[\exp\Big(\frac{X^2}{4\sigma^2}\Big)\bigg]=\E\bigg[\bigg(\exp\Big(\frac{X^2}{\sigma^2}\Big)\bigg)^{1/4}\bigg]\le \E\bigg[\bigg(\exp\Big(\frac{X^2}{\sigma^2}\Big)\bigg)\bigg]^{1/4}\le 2^{1/4}.
\end{equation*}
Then,
\begin{align*}
\E\bigg[\exp\Big(\frac{|X^2-\E[X^2]|}{4\sigma^2}\Big)\bigg]
&\le \exp\Big(\frac{\E[X^2]}{4\sigma^2}\Big)\E\bigg[\exp\Big(\frac{X^2}{4\sigma^2}\Big)\bigg]\\
&\le e^{1/4}\cdot 2^{1/4}\le 2. 
\end{align*}
Hence the result.
\end{proof}

\noindent
We recall: 
\begin{prop}[\cite{vershynin2018high}, Proposition 2.5.2]\label{prop:equivalence_subGaussian}
Let $X$ be a random variable. Then the following properties are equivalent; the parameters $K_i > 0$ appearing in these properties differ from each other by at most an absolute constant factor.
\begin{enumerate}
\item For all $t \geq 0$, $\P(|X| \geq t) \leq 2 \exp(-t^2/K_1^2)$.
\item For all $p \geq 1$, $\|X\|_p = (\E|X|^p)^{1/p} \leq K_2 \sqrt{p}$.
\item For all $\lambda$ such that $|\lambda| \leq K_1$, we have
$\E[\exp(\lambda^2 X^2)] \leq \exp(K_3^2 \lambda^2)$.
\item We have
$\E[\exp(X^2 / K_4^2)] \leq 2$.
\item 
Moreover, if $\E[X] = 0$ then properties (i)–(iv) are also equivalent to the following property: For all $\lambda \in \R$,
$\E[\exp(\lambda X)] \leq \exp(K_5^2 \lambda^2)$ .
\end{enumerate}    
\end{prop}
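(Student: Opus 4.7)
The plan is to establish the cycle of implications (i) $\Rightarrow$ (ii) $\Rightarrow$ (iii) $\Rightarrow$ (iv) $\Rightarrow$ (i), tracking constants carefully so that they differ only by absolute factors, and then close the argument by showing that under centering, (iii) $\Rightarrow$ (v) $\Rightarrow$ (i).

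For (i) $\Rightarrow$ (ii), I would use the layer-cake formula $\E|X|^p = p\int_0^\infty t^{p-1}\P(|X|\ge t)\,dt$, substitute the sub-Gaussian tail from (i), and reduce to a Gamma integral; a Stirling-type asymptotic then gives $\|X\|_p \le CK_1\sqrt{p}$. For (ii) $\Rightarrow$ (iii), I would Taylor-expand $\E[e^{\lambda^2X^2}] = \sum_{k\ge 0}\lambda^{2k}\E[X^{2k}]/k!$ and use (ii) to bound $\E[X^{2k}]\le (K_2\sqrt{2k})^{2k}$; combined with $k!\ge(k/e)^k$, each term becomes $(2eK_2^2\lambda^2)^k$, so the series is summable whenever $|\lambda|\le(2e K_2^2)^{-1/2}$, which gives (iii) with $K_3$ proportional to $K_2$. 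The implication (iii) $\Rightarrow$ (iv) is immediate by choosing $\lambda=1/K_4$ for $K_4$ large enough so that $K_3^2/K_4^2\le \log 2$. Finally, for (iv) $\Rightarrow$ (i), Markov's inequality yields
\begin{equation*}
\P(|X|\ge t)\;\le\;\E[e^{X^2/K_4^2}]\,e^{-t^2/K_4^2}\;\le\;2e^{-t^2/K_4^2}.
\end{equation*}

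Under the centering assumption $\E[X]=0$, for the implication (iii) $\Rightarrow$ (v) I would use a symmetrization trick: introducing an independent copy $X'$ of $X$ and applying Jensen's inequality in the form $\E[e^{\lambda X}]\le\E[e^{\lambda(X-X')}]$, then exploiting the symmetry of $X-X'$ to write this as $\E[\cosh(\lambda(X-X'))]\le\E[e^{\lambda^2(X-X')^2/2}]$. Using the elementary inequality $(X-X')^2\le 2X^2+2X'^2$ and independence, this reduces to $(\E[e^{\lambda^2 X^2}])^2\le\exp(2K_3^2\lambda^2)$, giving $K_5$ proportional to $K_3$. The converse (v) $\Rightarrow$ (i) follows from a Chernoff bound: $\P(|X|\ge t)\le 2\inf_{\lambda>0}\exp(-\lambda t+K_5^2\lambda^2)$, optimized at $\lambda=t/(2K_5^2)$, which yields $\P(|X|\ge t)\le 2\exp(-t^2/(4K_5^2))$.

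The main subtlety is the implication (ii) $\Rightarrow$ (iii), which requires a careful combinatorial bound to ensure the radius of convergence of the series defining $\E[e^{\lambda^2 X^2}]$ is uniform in $X$, depending only on $K_2$. A secondary point is bookkeeping: one must verify that the constants $K_1,\ldots,K_5$ obtained through the chain of implications differ from each other by at most an absolute multiplicative factor, independent of the distribution of $X$. The other implications are essentially routine once these two are controlled.
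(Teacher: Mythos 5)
The paper itself offers no proof of this statement: it is imported verbatim (as Proposition~2.5.2 of Vershynin's book), so your proposal can only be measured against the standard textbook argument, which your cycle (i)$\Rightarrow$(ii)$\Rightarrow$(iii)$\Rightarrow$(iv)$\Rightarrow$(i) essentially reproduces. Those four implications are fine up to a small slip: at $|\lambda|=(2eK_2^2)^{-1/2}$ the geometric series in (ii)$\Rightarrow$(iii) has ratio $1$ and diverges, so you need a strictly smaller radius (say $2eK_2^2\lambda^2\le 1/2$, then $1/(1-x)\le e^{2x}$ gives the form $\exp(K_3^2\lambda^2)$); this only affects constants. (Incidentally, the paper's phrasing ``for all $|\lambda|\le K_1$'' in (iii) is a typo for $|\lambda|\le 1/K_3$; your computation implicitly uses the correct range.)

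The genuine gap is in (iii)$\Rightarrow$(v). Your chain $\E[e^{\lambda X}]\le \E[e^{\lambda(X-X')}]\le \E[e^{\lambda^2(X-X')^2/2}]\le (\E[e^{\lambda^2X^2}])^2$ can only be closed on the bounded range of $\lambda$ where (iii) applies, whereas property (v) requires the bound for \emph{all} $\lambda\in\R$. The failure is not merely that (iii) is silent for large $\lambda$: the intermediate quantity is genuinely infinite there. For $X$ standard Gaussian, $\E[e^{\lambda(X-X')}]=e^{\lambda^2}$ is finite for every $\lambda$, but $\E[e^{\lambda^2(X-X')^2/2}]=(1-2\lambda^2)^{-1/2}$ blows up as $\lambda^2\uparrow 1/2$, so the $\cosh$/symmetrization route cannot produce (v) in the large-$\lambda$ regime. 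You need the separate argument that is precisely the delicate point of the standard proof: for $|\lambda|\ge c/K_4$, use Young's inequality $2\lambda X\le \lambda^2K_4^2+X^2/K_4^2$ (no centering needed) together with (iv), giving $\E[e^{\lambda X}]\le e^{\lambda^2K_4^2/2}\,\E[e^{X^2/(2K_4^2)}]\le 2e^{\lambda^2K_4^2/2}\le e^{\frc\lambda^2K_4^2}$ once $\lambda^2K_4^2\gtrsim 1$; patching the two regimes yields (v) with $K_5$ comparable to $K_3$. Your remaining steps, (v)$\Rightarrow$(i) by the Chernoff bound and the constant bookkeeping, are correct.
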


\begin{lem}[Bernstein inequalities for sub-exponential/Gaussian random variables]\label{lem:Bernstein_subexp}
Let \(X_1, X_2, \dots, X_n\) be independent, mean-zero, random variables.
\begin{itemize}
\item Assume the $X_i$ are sub-exponential such that $\|X_i\|_{\psi_1} \leq K$ for all $i\in[n]$. Then, there exists $c > 0$ such that for all $t > 0$,
\begin{equation}\label{eq:Bernstein_ind_subexp}
\mathbb{P}\left( \left| \sum_{i=1}^n X_i \right| \geq t \right)
\leq 2 \exp\bigg(- c \, \min\left( \frac{t^2}{n K^2}, \frac{t}{K} \right)\bigg).
\end{equation}
Equivalently, with probability $1-\delta$, there exists $C>0$ such that
\begin{equation}\label{eq:Bernstein_ind_subexp2}
\bigg| \sum_{i=1}^n X_i \bigg|\le CK\Bigg(\sqrt{\frac{\log(2/\delta)}{n}}+\frac{\log(2/\delta)}{n}\Bigg).
\end{equation}
\item Assume the $X_i$ are sub-Gaussian such that $\|X_i\|_{\psi_2} \leq K$ for all $i\in[n]$. Then, there exists $c > 0$ such that for all $t > 0$,
\begin{equation}\label{eq:Bernstein_ind_subgauss}
\mathbb{P}\left( \bigg| \sum_{i=1}^n X_i \bigg| \geq t \right)
\leq 2 \exp\Big(- c \frac{t^2}{n K^2}\Big).
\end{equation}
\end{itemize}
\end{lem}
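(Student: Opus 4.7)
The plan is to follow the classical Chernoff--MGF approach, which tensorises cleanly under independence. The starting point is the MGF characterisation of the two Orlicz classes (which follows from Proposition~\ref{prop:equivalence_subGaussian} and its sub-exponential analog, see \cite{vershynin2018high}, Sections~2.5 and~2.8): there exist absolute constants $c_1,c_2>0$ such that any centered $X$ with $\|X\|_{\psi_2}\le K$ satisfies
\[
\E\bigl[e^{\lambda X}\bigr]\le \exp\bigl(c_1 K^2\lambda^2\bigr)\quad\text{for all }\lambda\in\R,
\]
whereas if $\|X\|_{\psi_1}\le K$ the same inequality holds only under the range restriction $|\lambda|\le c_2/K$.

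Second, by independence and Markov's inequality applied to $\exp\bigl(\lambda\sum X_i\bigr)$, for any admissible $\lambda>0$,
\[
\P\Bigl(\sum_{i=1}^n X_i\ge t\Bigr)\le e^{-\lambda t}\prod_{i=1}^n\E\bigl[e^{\lambda X_i}\bigr]\le \exp\bigl(c_1 nK^2\lambda^2-\lambda t\bigr).
\]
In the sub-Gaussian case $\lambda$ is unconstrained, so the unconstrained minimiser $\lambda^\star=t/(2c_1 nK^2)$ yields $\exp(-ct^2/(nK^2))$; symmetrising via the same bound on $-X_i$ gives \eqref{eq:Bernstein_ind_subgauss}. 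In the sub-exponential case I split into two regimes according to whether $\lambda^\star$ lies inside $[0,c_2/K]$: for $t\le 2c_1c_2 nK$ the sub-Gaussian computation goes through unchanged and gives $\exp(-ct^2/(nK^2))$; for $t>2c_1c_2 nK$ I set $\lambda=c_2/K$ at the boundary, which produces $\exp(c_1c_2^2 n-c_2 t/K)\le\exp(-c' t/K)$ because the linear term dominates the constant in this regime. Combining the two bounds yields the $\min$ inside the exponent of \eqref{eq:Bernstein_ind_subexp}, and a union bound over the two signs gives the two-sided statement.

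Finally, \eqref{eq:Bernstein_ind_subexp2} is obtained by inverting \eqref{eq:Bernstein_ind_subexp}: setting the right-hand side equal to $\delta$ and solving for $t$, the quadratic regime produces a term of order $K\sqrt{n\log(2/\delta)}$ and the linear regime produces a term of order $K\log(2/\delta)$, whose sum dominates the $\min$ on either side of the transition. The only point requiring care throughout is the bookkeeping of the admissible range of $\lambda$ in the sub-exponential case and ensuring that the transition between the two regimes is handled by taking the sum (rather than the actual piecewise minimiser) so that the final bound is expressed as a single clean expression.
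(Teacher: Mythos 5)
Your proposal is correct, and it is the standard Chernoff--MGF argument: the paper itself gives no proof of this lemma (it is recalled as a classical result from Vershynin's book), so your route is precisely the textbook one behind the cited statement. The MGF characterizations you invoke, the unconstrained optimization in the sub-Gaussian case, the two-regime optimization over $\lambda\in[0,c_2/K]$ in the sub-exponential case (with the boundary choice $\lambda=c_2/K$ and the observation that $c_1c_2^2 n\le c_2 t/(2K)$ when $t>2c_1c_2 nK$), and the symmetrization giving the factor $2$ are all handled correctly, and combining the two regimes into the $\min$ in \eqref{eq:Bernstein_ind_subexp} works with $c$ taken as the smaller of the two regime constants.

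One point to flag concerning the last step: inverting \eqref{eq:Bernstein_ind_subexp} as you do yields $\bigl|\sum_{i=1}^n X_i\bigr|\le CK\bigl(\sqrt{n\log(2/\delta)}+\log(2/\delta)\bigr)$ with probability at least $1-\delta$, which is the correct sum-scale bound and is exactly what your two regime terms give. This does not literally match \eqref{eq:Bernstein_ind_subexp2} as displayed, which carries the $1/n$ normalizations of the empirical-mean version while keeping the raw sum on the left-hand side; as printed that inequality is off by a factor of $n$ (it would assert $\bigl|\sum_i X_i\bigr|\lesssim K/\sqrt{n}$, which fails for nondegenerate i.i.d.\ variables). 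So your derivation proves the correct statement, but to match the display you should either state the conclusion for $\frac1n\sum_{i=1}^n X_i$ or note explicitly that \eqref{eq:Bernstein_ind_subexp2} contains a normalization typo; as written, your claim that the inversion "gives" \eqref{eq:Bernstein_ind_subexp2} silently glosses over this mismatch.
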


\begin{prop}[Bernstein inequality for weighted sums of sub-exponential variables]\label{prop:Bernstein_weight_subexp}
Let $X_1, X_2, \dots, X_n$ be independent sub-exponential random variables satisfying
$\|X_i\|_{\psi_1}\le K$ for all $i$, and let $(\alpha_i)_{i\in[n]}$ be deterministic weights.
Then there exists a universal constant $\frc>0$ such that, for all $\delta\in(0,1)$,
\begin{equation*}
\P\left(
\sum_{i=1}^n \alpha_i |X_i|
\ge
\frc K \Bigg(
\sqrt{\log(1/\delta)\sum_{i=1}^n \alpha_i^2}
+
\log(1/\delta)\max_{1\le i\le n}|\alpha_i|
\Bigg)
\right)
\le \delta.
\end{equation*}    
\end{prop}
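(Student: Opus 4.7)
The plan is to prove this via the classical Chernoff--MGF method applied to weighted sums of independent sub-exponential summands, in the spirit of \cite{vershynin2018high}, Section~2.8. Since the right-hand side of the stated inequality contains no mean term, I would first reduce to the centered case by writing $|X_i| = Y_i + \mu_i$ with $Y_i := |X_i| - \E[|X_i|]$ and $\mu_i := \E[|X_i|]$, noting that $\|Y_i\|_{\psi_1} \le 2K$ (the $\psi_1$-norm is translation-stable up to universal constants, and $\mu_i \lesssim K$). The proposition is actually invoked in Lemma~\ref{lem:cellwise_ols_tail_union} on already centered summands, so this reduction is cost-free; alternatively, one may read the statement as a deviation bound around the expectation of $\sum \alpha_i|X_i|$.

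The key analytic step is the MGF bound for centered sub-exponentials (Proposition~\ref{prop:equivalence_subGaussian} together with Lemma~2.7.7 of \cite{vershynin2018high}): there exist absolute constants $c_0, c_1>0$ such that
\begin{equation*}
\E[\exp(\lambda Y_i)] \le \exp(c_0 K^2 \lambda^2)
\qquad\text{for all } |\lambda|\le 1/(c_1 K).
\end{equation*}
Substituting $\lambda\alpha_i$ for $\lambda$ and using independence tensorizes the MGF of $S := \sum_i \alpha_i Y_i$:
\begin{equation*}
\E[\exp(\lambda S)] \le \exp\Big(c_0 K^2 \lambda^2 V\Big),
\qquad V := \sum_{i=1}^n \alpha_i^2,
\end{equation*}
valid in the strip $|\lambda| \le 1/(c_1 K \max_i|\alpha_i|)$, since the constraint must hold coordinatewise.

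A Markov/Chernoff bound then yields $\P(S\ge t) \le \exp(-\lambda t + c_0 K^2 \lambda^2 V)$, and I would optimize in $\lambda$ over the admissible range. Two regimes appear: if $t \le 2 c_0 K V /(c_1\max_i|\alpha_i|)$, the unconstrained optimum $\lambda^\star = t/(2c_0 K^2 V)$ is admissible and gives a sub-Gaussian tail $\exp(-t^2/(4c_0 K^2 V))$; otherwise, the optimum sits at the boundary of the strip and produces a sub-exponential tail $\exp(-c_2 t/(K\max_i|\alpha_i|))$. Combining the two regimes yields
\begin{equation*}
\P(S\ge t) \le \exp\Big(-c\min\Big\{\frac{t^2}{K^2 V},\ \frac{t}{K\max_i|\alpha_i|}\Big\}\Big),
\end{equation*}
and the same applied to $-S$ gives the two-sided version after a union bound.

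Inverting this tail bound by setting the right-hand side equal to $\delta$ and solving for $t$ produces the stated two-term expression $K\bigl(\sqrt{\log(1/\delta)\,V} + \log(1/\delta)\max_i|\alpha_i|\bigr)$, absorbing the factor of $2$ into the universal constant $\frc$. I do not expect any genuine obstacle: the only subtlety is the restriction $|\lambda|\le 1/(c_1 K\max_i|\alpha_i|)$, which forces the optimization to be split into two regimes and is precisely what produces the $\max_i|\alpha_i|$ factor in the linear term. Tracking the centering constants from the initial reduction is routine since $\mu_i \lesssim K$ contributes only to the additive expectation already accounted for in the applications of the proposition.
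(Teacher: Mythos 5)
Your proposal is correct and is the standard Chernoff--MGF argument for Bernstein's inequality for weighted sums of sub-exponential variables (centering, the MGF bound $\E[\exp(\lambda Y_i)]\le\exp(c_0K^2\lambda^2)$ valid on the strip $|\lambda|\le 1/(c_1K)$, tensorization, two-regime optimization over the constrained strip, and inversion of the resulting $\min$-tail). The paper gives no proof of this proposition at all -- it is stated as a known fact in the technical appendix, implicitly resting on \cite{vershynin2018high} -- so there is nothing to compare against beyond noting that your route is exactly the one the cited reference takes.

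One point deserves emphasis, and you handle it correctly: the statement as literally written cannot hold, because the left-hand side $\sum_i\alpha_i|X_i|$ is uncentered while the right-hand side contains no mean term. With nonnegative weights, $\E\bigl[\sum_i\alpha_i|X_i|\bigr]$ can be of order $K\sum_i\alpha_i$, which is not dominated by $K\bigl(\sqrt{\log(1/\delta)\sum_i\alpha_i^2}+\log(1/\delta)\max_i|\alpha_i|\bigr)$ (take $\alpha_i=1/n$ and $\delta$ fixed). So what your argument actually proves -- and all that can be proved -- is the deviation bound around $\sum_i\alpha_i\E|X_i|$, equivalently the stated bound with the mean added to the threshold. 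This corrected reading is consistent with how the proposition is used in the paper: in Lemma~\ref{lem:cellwise_ols_tail_union} it is applied to centered summands $w_{i,\ell}(v^\top\phi_{H,\ell}(X_i))\varepsilon_i$, and in Lemma~\ref{lem:hp_oahl} the weights are normalized ($\sum_i\alpha_{i,\ell}=1$) and the resulting bound $\frc\bigl(1+\sqrt{\log(L_H/\delta)}\bigr)$ carries an additive constant absorbing precisely the mean term. Your centering step, with $\|\,|X_i|-\E|X_i|\,\|_{\psi_1}\lesssim K$, is the right fix; it would be worth flagging that the proposition's statement should be amended accordingly.
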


\subsection{Approximation results}

\begin{df}
Let $A\subset\R^d$. The Kolmogorov $n$-width of a subset 
$K \subset \L^\infty(A)$ 
is defined by
\begin{equation}\label{Eq:komogorov_width_def}
d_n(K, \L^\infty(A))
:=
\inf_{\substack{V \subset \L^\infty(A) \\ \dim V = n}}
\;
\sup_{g \in K}
\;
\inf_{\phi \in V}
\| g - \phi \|_{\L^\infty(A)}.
\end{equation}
\end{df}
\noindent
Let $A\subset\R^d$ be a bounded Lipschitz (whose boundary can be described locally by graphs of Lipschitz functions) domain, $1\le p\le\infty$, and
\begin{equation*}
\mathcal B_s^{\mathrm{Höl}}(A)
:=\{ f\in\mathrm{Höl}(s;A): \|f\|_{\mathrm{Höl}(s;A)}\le 1\}
\end{equation*}
be the unit H\"older ball of smoothness $s>0$ (H\"older-Zygmund class).
The Kolmogorov $k$-width in $\L^p(A)$ is
\begin{equation*}
d_k(\mathcal B_s^{\mathrm{Höl}}(A),\L^p(A))
:= \inf_{\substack{V\subset \L^p(A)\\ \dim V=k}}
\ \sup_{f\in \mathcal B_s^{\mathrm{Höl}}(A)} \ \inf_{g\in V}\ \|f-g\|_{\L^p(A)}.
\end{equation*}
The following result can be found in (\cite{pinkus2012n}, Chapter 7) or (\cite{devore1993constructive}, Chapter 9).
\begin{thm}[Kolmogorov $k$-widths of H\"older balls in $\L^{p}$]\label{Thm:Kolmogorov_width}
Let $A\subset\R^{d}$ be a bounded Lipschitz (i.e. whose boundary can be described locally by graphs of Lipschitz functions) domain, let $1\le p\le\infty$, and let $s>0$. Then there exist constants $\frc,\frc'>0$ depending only on $s$, $d$, $p$, and $A$ such that
\begin{equation*}
\frc k^{-s/d}
\le
d_{k}(\mathcal B_s^{\mathrm{Höl}}(A),\L^{p}(A))
\le\frc' k^{-s/d},
\qquad k\ge 1.
\end{equation*}
In particular, the optimal $n$-dimensional approximation rate of $s$-Hölder functions 
in $\L^{p}(A)$ is $n^{-s/d}$.
\end{thm}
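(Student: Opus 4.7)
The bound splits into the classical two-sided scheme for Kolmogorov widths of smoothness classes: a constructive upper bound via piecewise polynomial approximation on a quasi-uniform tessellation of $A$, and a Bernstein-type obstruction lower bound using disjointly supported bumps. The matching rate $k^{-s/d}$ emerges in both directions from the common length scale $h \asymp k^{-1/d}$ of the local building blocks.

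\emph{Upper bound.} Set $h := (c_0 k)^{-1/d}$ with $c_0$ to be tuned. Using the Lipschitz structure of $\partial A$, cover $A$ by $N \asymp h^{-d}$ axis-aligned cubes $\{Q_j\}_{j=1}^N$ of side $h$ with bounded overlap. Let $m := \lceil s \rceil - 1$, let $\mathcal P_m$ denote the space of polynomials of total degree at most $m$, and define the candidate subspace
\[
V_k := \bigoplus_{j=1}^N \bigl\{P \cdot \mathbf 1_{Q_j \cap A} : P \in \mathcal P_m\bigr\},
\qquad \dim V_k \le N \binom{m+d}{d}.
\]
Choosing $c_0 = c_0(s, d)$ large enough ensures $\dim V_k \le k$. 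For any $f \in \mathcal B_s^{\mathrm{Höl}}(A)$, the Bramble-Hilbert / Whitney local polynomial approximation provides $P_j \in \mathcal P_m$ with $\|f - P_j\|_{L^\infty(Q_j \cap A)} \le C(s, d, A)\, h^s$, and assembling $g := \sum_j P_j \mathbf 1_{Q_j \cap A} \in V_k$ yields $\|f - g\|_{L^p(A)} \le |A|^{1/p} \|f - g\|_{L^\infty(A)} \lesssim k^{-s/d}$.

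\emph{Lower bound.} Fix a nonzero $\varphi \in C_c^\infty(B(0,1))$ and an open ball $B(x_0, r_0) \subset A$. For $h := c_1 k^{-1/d}$ with $c_1 = c_1(d, r_0)$ small, place $N = 2k+1$ centers $x_1, \ldots, x_N \in B(x_0, r_0)$ such that the balls $B(x_i, h)$ are pairwise disjoint and separated by distance $\ge 2h$, and set
\[
\varphi_i(x) := \beta\, h^s\, \varphi\!\left(\frac{x - x_i}{h}\right), \qquad i = 1, \ldots, N.
\]
Under the dilation $y = (x - x_i)/h$ the Hölder seminorm of order $s$ is exactly dilation-invariant thanks to the prefactor $h^s$, giving $[\varphi_i]_s = \beta [\varphi]_s$ uniformly in $h \in (0,1]$ and $i$. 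Using the separation of supports one chooses $\beta = \beta(s, d, \varphi)$ small enough so that every signed sum $f_\sigma := \sum_i \sigma_i \varphi_i$ with $\sigma \in [-1, +1]^N$ belongs to $\mathcal B_s^{\mathrm{Höl}}(A)$. By disjointness of supports,
\[
\|f_\sigma\|_{L^p(A)} = \gamma\, h^{s + d/p}\, \|\sigma\|_{\ell^p},
\qquad \gamma = \gamma(p, d, \varphi, \beta) > 0,
\]
so that the linear map $\sigma \mapsto f_\sigma$ embeds $\ell^p_N$ into $L^p(A)$ up to the factor $\gamma h^{s + d/p}$, with the $\ell^\infty_N$-unit ball mapped into $\mathcal B_s^{\mathrm{Höl}}(A)$. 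The classical Kolmogorov width of the $\ell^\infty_N$-ball in $\ell^p_N$ (see Pinkus, \emph{$n$-Widths in Approximation Theory}, Chap.~II) satisfies $d_k(B_{\ell^\infty_N}, \ell^p_N) \gtrsim N^{1/p}$ for $N \ge 2k$, so by transfer
\[
d_k(\mathcal B_s^{\mathrm{Höl}}(A), L^p(A)) \gtrsim \gamma\, h^{s + d/p}\, N^{1/p} \asymp h^s \asymp k^{-s/d},
\]
since the factor $N^{1/p} \asymp k^{1/p}$ precisely cancels the $h^{d/p}$ loss. This delivers the matching lower bound.

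\emph{Main obstacle.} The only genuinely delicate point is verifying that every signed sum $f_\sigma$ lies in the unit Hölder ball: within a single bump this is just the scaling identity above, but the Hölder quotient between points in distinct bumps must be controlled via the separation $|x - y| \ge 2h$ of their supports, and for $s > 1$ one must additionally track all derivatives up to order $\lfloor s \rfloor$ vanishing identically outside each bump. Everything else reduces to textbook ingredients: Whitney-type piecewise-polynomial approximation, the $L^p$-identity for disjoint supports, and the standard lower bound on the Kolmogorov width of $\ell^\infty_N$-balls in $\ell^p_N$.
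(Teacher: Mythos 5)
The paper does not prove this statement at all: it is quoted as a classical result with a pointer to Pinkus (\emph{$n$-Widths in Approximation Theory}, Ch.~7) and DeVore--Lorentz (\emph{Constructive Approximation}, Ch.~9). Your proposal reconstructs exactly the standard argument from those references, and it is essentially correct: piecewise-polynomial (Whitney/Bramble--Hilbert) approximation on a mesh of width $h\asymp k^{-1/d}$ for the upper bound, and a family of $N=2k+1$ disjointly supported, $h^{s}$-rescaled bumps combined with Stesin's identity $d_k(B_{\ell^\infty_N},\ell^p_N)=(N-k)^{1/p}$ for the lower bound; the exponent bookkeeping $h^{s+d/p}\cdot N^{1/p}\asymp h^{s}\asymp k^{-s/d}$ is right, and you correctly flag the cross-bump H\"older quotient (controlled by the $\gtrsim h$ separation of supports) as the one point needing care. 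Two steps are glossed over and worth writing out if this were to stand as a full proof. First, in the upper bound the cells must form a genuine partition (or be glued with a partition of unity): with an overlapping cover, $\sum_j P_j\mathbf 1_{Q_j\cap A}$ double-counts on overlaps and the sum is not direct. Second, the ``transfer'' of the finite-dimensional width lower bound is not automatic: a $k$-dimensional subspace $V\subset L^p(A)$ need not sit inside $\mathrm{span}\{\varphi_i\}$, so one needs the norm-one projection of $L^p(A)$ onto that span (built cellwise from the norming functionals of the $\varphi_i$, using disjointness of supports) to push $V$ into an at most $k$-dimensional subspace of $\ell^p_N$ without increasing distances. Both are textbook repairs, so the proposal is a sound, self-contained substitute for the citation the paper relies on.
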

\begin{cor}[Scaling to a ball of radius $r$]\label{Cor:Kolmogorov_width}
Let $Q=\cB_d(x_0,r)\subset\R^d$ be the ball of radius $r>0$ centered at $x_0$, let $1\le p\le\infty$, and let
\begin{equation*}
\mathcal B_s^{\mathrm{Höl}}(Q)
:= \big\{f\in\mathrm{Höl}(s;Q)): \|f\|_{\mathrm{Höl}(s;Q)}\le 1\big\}.
\end{equation*}
Then there exist constants $\frc,\frc'>0$ depending only on $s,d,p$ such that, for all $k\ge 1$,
\begin{equation*}
\frc k^{-s/d} r^{s+d/p}
\le
d_k\big(\mathcal B_s^{\mathrm{Höl}}(Q),\L^p(Q)\big)
\le k^{-s/d}\frc' r^{s+d/p}.
\end{equation*}
In particular, for $k=2$ and $p=2$,
\begin{equation*}
d_k\big(\mathcal B_s^{\mathrm{Höl}}(Q),\L^2(Q)\big)
=\frc r^{s+d/2}.
\end{equation*}
\end{cor}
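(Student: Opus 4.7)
The plan is to deduce both inequalities from their counterparts on the unit ball $B_d(0,1)$ (Theorem \ref{Thm:Kolmogorov_width}) by invoking the affine change of variables $\phi(y) = x_0 + ry$, which maps $B_d(0,1)$ diffeomorphically onto $Q$. Under this dilation, the $L^p(Q)$ norm rescales by $r^{d/p}$ and, after introducing an additional multiplicative factor $r^s$ on the function values to keep the top-order H\"older seminorm of size $O(1)$, the product scaling $r^{s+d/p}$ is exactly what appears on the right-hand side of the statement. The role of Theorem \ref{Thm:Kolmogorov_width} is then to supply the intrinsic rate $k^{-s/d}$ on $B_d(0,1)$.

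\textbf{Lower bound.} I would fix an arbitrary $k$-dimensional subspace $V \subset L^p(Q)$ and define the pull-back
\begin{equation*}
\tilde V := \{\tilde v : \tilde v(y) = r^{-s}\, v(x_0+ry),\ v \in V\} \subset L^p(B_d(0,1)),
\end{equation*}
which is $k$-dimensional. By Theorem \ref{Thm:Kolmogorov_width}, there exists $\tilde g^* \in \mathcal{B}_s^{\mathrm{Höl}}(B_d(0,1))$ with $\operatorname{dist}_{L^p(B_d(0,1))}(\tilde g^*, \tilde V) \ge \frc\, k^{-s/d}$. Lifting via $g^*(x) := r^s\, \tilde g^*\bigl((x-x_0)/r\bigr)$, the derivative-scaling identity $\partial^\beta g^*(x) = r^{s-|\beta|}\,\partial^\beta \tilde g^*\bigl((x-x_0)/r\bigr)$ shows that $\|g^*\|_{\mathrm{Höl}(s;Q)} \le C(s,d)$ whenever $r$ stays in a bounded range (which is the relevant regime, since $Q \subset [0,1]^d$). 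A change of variables in $L^p$ then gives
\begin{equation*}
\operatorname{dist}_{L^p(Q)}(g^*, V) = r^{s+d/p}\, \operatorname{dist}_{L^p(B_d(0,1))}(\tilde g^*, \tilde V) \ge \frc\, r^{s+d/p}\, k^{-s/d}.
\end{equation*}
Rescaling $g^*$ by $C(s,d)^{-1}$ places it in $\mathcal{B}_s^{\mathrm{Höl}}(Q)$; taking the infimum over $V$ yields the claimed lower bound with constant $\frc/C(s,d)$.

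\textbf{Upper bound.} For the matching upper bound, I would argue directly on $Q$ via piecewise Taylor approximation. Decompose $Q$ into $k$ sub-regions $(Q_j)_{j=1}^k$ of diameter $\lesssim r k^{-1/d}$ (e.g., intersect $Q$ with a uniform cube grid of that mesh), fix reference points $x_j \in Q_j$, and let $P_j$ be the Taylor polynomial of $f \in \mathcal{B}_s^{\mathrm{Höl}}(Q)$ at $x_j$ of degree $m = \lfloor s \rfloor$. The standard Hölder remainder estimate yields
\begin{equation*}
\|f - P_j\|_{L^\infty(Q_j)} \le C\,(r k^{-1/d})^s \,\|f\|_{\mathrm{Höl}(s;Q)} \le C\, r^s\, k^{-s/d},
\end{equation*}
and hence $\|f - P_j\|_{L^p(Q_j)}^p \lesssim r^{sp+d}\, k^{-(sp+d)/d}$. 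Summing over $j=1,\dots,k$ and taking the $p$-th root yields $\|f - P\|_{L^p(Q)} \le \frc'\, r^{s+d/p}\, k^{-s/d}$, where $P := \sum_j P_j\, \mathbf{1}_{Q_j}$ lies in a finite-dimensional space of dimension $k\,\binom{m+d}{d}$; the combinatorial factor is absorbed into $\frc'$ after a cosmetic rescaling of $k$.

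\textbf{Main obstacle.} The only genuinely delicate step is the scaling estimate $\|g^*\|_{\mathrm{Höl}(s;Q)} \le C(s,d)$ used in the lower bound. Because the inhomogeneous Hölder norm aggregates derivatives of different orders, the dilation $\phi$ does not act as a single multiplicative homothety: each multi-index $\beta$ contributes a factor $r^{s-|\beta|}$ that must be controlled uniformly over the admissible range of $r$, while the top-order seminorm contributes a factor $r^{s-m-(s-m)} = 1$. Once these order-by-order scalings are checked, everything else reduces to a routine change of variables in $L^p$ and to standard Taylor estimates on a quasi-uniform partition of $Q$.
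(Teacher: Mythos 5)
Your proposal is essentially correct and, on the lower bound, follows the same route as the paper: pull an arbitrary $k$-dimensional subspace back to the unit ball, invoke Theorem~\ref{Thm:Kolmogorov_width}, and rescale. In fact your version is more careful than the paper's sketch on the one point that matters: the factor $r^{s}$ does not come for free from the substitution $y\mapsto x_0+ry$ (which by itself only rescales the $\mathrm L^p$ norm by $r^{d/p}$), but from multiplying the lifted function by $r^{s}$ and checking, order by order, that all derivative factors $r^{s-|\beta|}$ stay bounded when $r$ is bounded, the top-order seminorm being exactly invariant. Your explicit restriction to a bounded range of $r$ is not a defect of your argument but a genuine (implicit) hypothesis of the statement itself: with the inhomogeneous Hölder norm the constants cannot be uniform in $r\to\infty$, and the regime $Q\subset[0,1]^d$ used in Appendix~\ref{Appsubsec:lower_linear_approx} is indeed the relevant one.

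For the upper bound you depart from the paper: instead of rescaling the unit-ball upper bound, you construct a piecewise Taylor approximation on a quasi-uniform partition of $Q$ into $\asymp k$ pieces of diameter $\asymp r k^{-1/d}$. The computation $\|f-P\|_{\mathrm L^p(Q)}\lesssim r^{s+d/p}k^{-s/d}$ is correct, and this constructive route has the merit of producing the $r^{s}$ factor transparently. One caveat: the final step, absorbing the dimension $\asymp k\binom{m+d}{d}$ of the piecewise-polynomial space by a ``cosmetic rescaling of $k$'', silently requires $k\gtrsim\binom{m+d}{d}$ with $m=\lfloor s\rfloor$; for $s>1$ and $k$ below that threshold the claimed bound $\frc' k^{-s/d}r^{s+d/p}$ with an $r$-independent constant actually fails (e.g.\ $k=1$, $s>1$: linear functions force a width of order $r^{1+d/p}$). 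This is a flaw of the corollary as stated rather than of your argument, and it is immaterial for how the result is used in the paper, which only invokes the \emph{lower} bound (with $k=2$, $p=2$, $s=1+\beta_{\mathrm{loc}}$); still, it would be worth stating the upper bound only for $k$ at least of order $\binom{\lfloor s\rfloor+d}{d}$, or restricting attention to the lower bound.
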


\emph{Proof (sketch).}
Let $B_0:=\cB_d(0,1)$ and $Q:=\cB_d(x_0,r)$.  Ley $S_r:\R^{Q}\to \R^{B_0}$ be the function such that $f(y):=f(x_0+ry)$, which maps functions on $Q$ to functions on $B_0$.
Then $\|S_r f\|_{\L^p(B_0)} = r^{d/p}\|f\|_{\L^p(Q)}$, and the $\cB_s^{\mathrm{Höl}}$ seminorm scales by $r^{s}$,
so $S_r$ maps $\mathcal B^{s}(Q)$ onto a set equivalent (up to constants independent of $r$) to
$\mathcal B^{s}(B_0)$.
Applying the $\L^p$ width result on $B_0$ and rescaling yields the claim.

\end{document}